\numberwithin{equation}{section}
\newtheorem{Thm}{Theorem}[section]
\newtheorem{Lem}[Thm]{Lemma}
\newtheorem{Claim}[Thm]{Claim}
\theoremstyle{definition}
\newtheorem{Def}[Thm]{Definition}
\newtheorem{Ex}[Thm]{Example}
\theoremstyle{remark}
\newtheorem{Remark}[Thm]{Remark}
\newtheorem*{Ack}{Acknowledgements}
\begin{document}

\title{A Floer homology invariant for $3$-orbifolds via bordered Floer theory}
\author{Biji Wong}
\address{CIRGET, Universit\'{e} du Qu\'{e}bec \`{a} Montr\'{e}al, PO Box 8888, Station Centre-ville, Montr\'{e}al, Qu\'{e}bec H3C 3P8}
\email{biji.wong@cirget.ca}
\urladdr{https://sites.google.com/view/cirget-bijiwong}
\maketitle

\begin{abstract}
Using bordered Floer theory, we construct an invariant $\widehat{\mathit{HFO}}(Y^{\text{orb}})$ for $3$-orbifolds $Y^{\text{orb}}$ with singular set a knot that generalizes the hat flavor $\widehat{\mathit{HF}}(Y)$ of Heegaard Floer homology for closed $3$-manifolds $Y$. We show that for a large class of $3$-orbifolds $\widehat{\mathit{HFO}}$ behaves like $\widehat{\mathit{HF}}$ in that $\widehat{\mathit{HFO}}$, together with a relative $\mathbb{Z}_2$-grading, categorifies the order of $H_1^{\text{orb}}$. When $Y^{\text{orb}}$ arises as Dehn surgery on an integer-framed knot in $S^3$, we use the $\{-1,0,1\}$-valued knot invariant $\varepsilon$ to determine the relationship between $\widehat{\mathit{HFO}}(Y^{\text{orb}})$ and $\widehat{\mathit{HF}}(Y)$ of the $3$-manifold $Y$ underlying $Y^{\text{orb}}$.
\end{abstract}

\section{Introduction}
Heegaard Floer homology, introduced by Ozsv\'{a}th and Szab\'{o} in \cite{OS1}, is a package of invariants for closed $3$-manifolds that has produced a wealth of results in a variety of areas such as contact topology \cite{OS5, KMVW}, Dehn surgery \cite{Greene}, and knot theory \cite{OS3, OS4, OS7, Ni, Hom, OS6}. The purpose of this paper is to extend the hat version $\widehat{\mathit{HF}}$ of Heegaard Floer homology (with $\mathbb{Z}_2$ coefficients) to (orientable) $3$-orbifolds $Y^{\text{orb}}$ with singular locus a knot $K$. 

Three-orbifolds are spaces that locally look like quotients of $\mathbb{R}^3$ by finite subgroups of $SO(3)$. Over the past twenty years, much work has been done to construct homology invariants for $3$-orbifolds using gauge-theoretic ideas from Floer's original instanton homology theory \cite{Floer}, first by Collin and Steer in \cite{CS}, then by Kronheimer and Mrowka in \cite{KM1, KM2, KM3}. In this paper we offer up another homological invariant using the more combinatorial tool of bordered Heegaard Floer homology developed by Lipshitz, Ozsv\'{a}th, and D. Thurston in \cite{LOT1, LOT2} for $3$-manifolds with boundary. Specifically, we fix an equivariant neighborhood $N$ of the singular curve $K$ (together with some additional data for the equivariant torus boundary $\partial N$) and decompose the $3$-orbifold $Y^{\text{orb}}$ along $\partial N$. To $N$ we associate a (bounded) Type D structure that is sensitive to the equivariance around $K$. To the complement of $N$ (with induced data for its boundary) we associate the Type A structure given to us by bordered Floer theory. Motivated by the pairing theorem in bordered Floer theory, we define $\widehat{\mathit{HFO}}(Y^{\text{orb}})$ to be the homology of the box tensor product of the Type A structure with the Type D structure.

\begin{Thm}\label{definvariant}
$\widehat{\mathit{HFO}}(Y^{\emph{orb}})$ is a well-defined invariant of $Y^{\emph{orb}}$. Furthermore, when $Y^{\emph{orb}}$ is a $3$-manifold, $\widehat{\mathit{HFO}}(Y^{\emph{orb}})$ agrees with $\widehat{\mathit{HF}}(Y^{\emph{orb}})$.
\end{Thm}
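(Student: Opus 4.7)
The plan is to establish the two assertions separately. For well-definedness I would show that, up to homotopy equivalence, neither the equivariant Type D structure assigned to $N$ nor the Type A structure assigned to the complement $Y^{\emph{orb}} \setminus N$ depends on the auxiliary choices, so that by invariance of the box tensor product the homology $\widehat{\mathit{HFO}}(Y^{\emph{orb}})$ is an invariant of the orbifold. The Type A side is handled directly by the bordered invariance theorems of Lipshitz--Ozsv\'{a}th--D. Thurston, once one checks that any two equivariant neighborhoods $N, N'$ of $K$ are ambient isotopic in $Y^{\emph{orb}}$ and that any two choices of the auxiliary boundary data differ by a mapping class of $\partial N$ that can be absorbed into a change of bordered parametrization on the complement.

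The Type D side is where the novelty lies, and it is the step I expect to be the main obstacle. Here I would first show that any two equivariant Heegaard diagrams for $N$ with compatible boundary data are related by a finite list of moves -- isotopy, handleslide, and stabilization in the interior, together with changes of boundary parametrization -- and then verify that each such move induces a homotopy equivalence on the corresponding Type D structure. This adapts the invariance arguments of \cite{LOT1} to the equivariant setting; the bookkeeping for how the equivariance data interacts with the Type D differential and the action of the torus algebra is where care is most needed, since the usual holomorphic-disk counts must be reinterpreted in a way that is compatible with the branching along $K$.

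For the second assertion, when $Y^{\emph{orb}}$ is genuinely a $3$-manifold the isotropy along $K$ is trivial, so the equivariant neighborhood $N$ is just a standard framed tubular neighborhood of $K$ with no additional structure. By construction the equivariant Type D structure then reduces to the ordinary bordered Type D structure $\widehat{\mathit{CFD}}$ of a framed solid torus, and the Type A structure on the complement is the ordinary $\widehat{\mathit{CFA}}$. Applying the bordered pairing theorem of \cite{LOT1} gives
\[
\widehat{\mathit{HFO}}(Y^{\emph{orb}}) \;=\; H_*\!\left(\widehat{\mathit{CFA}}(Y^{\emph{orb}} \setminus N) \boxtimes \widehat{\mathit{CFD}}(N)\right) \;\cong\; \widehat{\mathit{HF}}(Y),
\]
completing the identification.
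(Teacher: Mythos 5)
There is a genuine gap here, stemming from a misunderstanding of how the paper sets up the Type D side. In the paper, $D_N$ is not defined by counting holomorphic disks in an equivariant Heegaard diagram and then proving invariance under diagram moves; it is declared outright to be a fixed, explicit algebraic object (Figure \ref{fig:CFD_Orbifold_Solid_Torus}), with the ``orbifold bordered Heegaard diagram'' serving only as motivation. Your proposal to ``show that any two equivariant Heegaard diagrams for $N$ are related by a finite list of moves'' and ``verify that each move induces a homotopy equivalence'' is a different, much heavier program (essentially the construction of a new equivariant bordered Floer theory) that the paper does not undertake and whose feasibility you have no argument for -- in particular you would need to define the holomorphic-curve counts near the branch locus, which is the very thing the paper avoids.

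The concrete step you are missing is this: once $D_N$ is fixed, varying the parameterization $\phi_N$ changes $\phi_{\partial E}$ and hence changes $\widehat{\mathit{CFA}}(E, \phi_{\partial E})$, while $D_N$ stays the same. Saying the mapping class ``can be absorbed into a change of bordered parametrization on the complement'' does not explain why the box tensor product is unchanged, because the complement's type A structure genuinely does change. What saves the day is that (a) the relevant mapping class $\psi$ of $F$ necessarily extends across the solid torus and is therefore isotopic to a power of the Dehn twist $\tau_{\alpha_2^a}$ about the meridian, and (b) the explicit computation $\widehat{\mathit{CFDA}}(\tau_{\alpha_2^a}) \boxtimes D_N' \simeq D_N'$, i.e., $D_N$ absorbs meridional Dehn twists. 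Step (b) is the crux of the invariance argument, and it is entirely absent from your proposal. Your treatment of the $n=1$ reduction to $\widehat{\mathit{HF}}$ via the pairing theorem is fine and matches the paper.
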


The underlying space $\left|Y^{\text{orb}}\right|$ of any $3$-orbifold $Y^{\text{orb}}$ is a $3$-manifold in a natural way, so one might wonder how $\widehat{\mathit{HFO}}(Y^{\text{orb}})$ compares to $\widehat{\mathit{HF}}(\left|Y^{\text{orb}}\right|)$. When the $3$-orbifold comes from Dehn surgery on an integrally framed knot $K \subset S^3$, we prove that the difference between $\widehat{\mathit{HFO}}(Y^{\text{orb}})$ and $\widehat{\mathit{HF}}(\left|Y^{\text{orb}}\right|)$ depends on $3$ integers: the framing on $K$, the singular order around $K$, and the $\{-1,0,1\}$-valued knot invariant $\varepsilon(K)$ introduced by Hom in \cite{Hom}.

\begin{Thm}\label{relationshipHFhat}
Let $Y$ be $r$-surgery on a knot $K \subset S^3$ where $r$ is any integer. Let $Y^{\emph{orb}}$ be the 3-orbifold with underlying space $Y$ and singular curve $K$ of order $n$. If $\varepsilon (K) = 0$ and $r = 0$, then $\emph{rank}\big(\widehat{\mathit{HFO}}(Y^{\emph{orb}})\big) = n \cdot \emph{rank}\big(\widehat{\mathit{HF}}(Y)\big) - 2n + 2$. Otherwise, $\emph{rank}\big(\widehat{\mathit{HFO}}(Y^{\emph{orb}})\big) = n \cdot \emph{rank}\big(\widehat{\mathit{HF}}(Y)\big)$.
\end{Thm}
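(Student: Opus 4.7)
The plan is to reduce both ranks to box tensor product computations via the bordered pairing theorem and then compare. Writing $M = S^3 \setminus \mathrm{int}\,N(K)$, we have
\[
\widehat{\mathit{HF}}(Y) \cong H_*\bigl(\widehat{\mathit{CFA}}(M) \boxtimes \widehat{\mathit{CFD}}(N_r)\bigr) \quad\text{and}\quad \widehat{\mathit{HFO}}(Y^{\text{orb}}) \cong H_*\bigl(\widehat{\mathit{CFA}}(M) \boxtimes D^{\text{orb}}_{n,r}\bigr),
\]
where $N_r$ is the $r$-framed solid torus and $D^{\text{orb}}_{n,r}$ denotes the Type D structure of the order-$n$ orbifold neighborhood with induced framing that was constructed earlier in the paper. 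The first step is to invoke Lipshitz--Ozsv\'ath--Thurston's presentation of $\widehat{\mathit{CFA}}(M)$ in terms of a horizontally and vertically simplified basis for $CFK^{-}(K)$: its generators split into stable basis chains, which collectively recover $\widehat{\mathit{HFK}}(K)$, together with a single \emph{unstable chain} whose two endpoints are glued into the stable basis at locations determined by $\tau(K)$ and by $\varepsilon(K)$.

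The second step is a direct comparison of the two box tensor products. Since $D^{\text{orb}}_{n,r}$ is designed to reflect the $\mathbb{Z}/n$ equivariance of the singular neighborhood, it has $n$ times as many generators as $\widehat{\mathit{CFD}}(N_r)$ and its differentials arrange themselves in an equivariant ladder. Tensoring with the stable part of $\widehat{\mathit{CFA}}(M)$ and taking homology yields exactly $n\cdot\mathrm{rank}\bigl(\widehat{\mathit{HF}}(Y)\bigr)$, so the full subtlety of the theorem lies in the tensor product with the unstable chain.

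The third step is a case analysis of this unstable-chain contribution, organized by whether a character $\phi^Y$ of $H_1^{\text{orb}}$ (arising from the bordered data) is nontrivial on the meridian $\mu$, trivial on $\mu$ but nontrivial on the framing curve $h$, or trivial on both. The first two cases together cover every situation in which $\varepsilon(K)\neq 0$ or $r\neq 0$: in each, the twist coming from the Type D ladder combines with the unstable-chain attachment so that the unstable contribution again scales by $n$, yielding the ``Otherwise'' formula. In the remaining case $\phi^Y(\mu)=\phi^Y(h)=1$, which one checks is equivalent to $\varepsilon(K)=0$ and $r=0$, the two endpoints of the unstable chain attach to the same basis element; once tensored with the $n$-fold ladder of $D^{\text{orb}}_{n,r}$, the chain links up across the $n$ copies into a single closed cycle rather than $n$ independent ones, and its contribution to homology is $2$ rather than $2n$, producing the $-2n+2$ correction. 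The main obstacle will be this final case, where one must explicitly identify the closed-up cycle in the $n$-fold ladder and verify the count of $2$ surviving classes, likely by reducing to a small model such as $0$-surgery on the unknot and bootstrapping from there.
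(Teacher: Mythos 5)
Your overall plan---reduce to box tensor products, invoke the Lipshitz--Ozsv\'ath--Thurston algorithm for $\widehat{\mathit{CFA}}$ of the knot exterior in terms of vertical/horizontal chains plus an unstable chain, and isolate the unstable chain as the source of the correction term---is the right intuition, and it does match the paper's strategy in spirit. However, there are two genuine gaps.

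First, the decomposition into ``stable part'' and ``unstable chain'' is not a direct sum decomposition in general. The endpoints $\bm{w_0}$ and $\bm{w_0'}$ of the unstable chain are precisely the distinguished generators of the vertical and horizontal complexes, but (depending on $\varepsilon(K)$) they can coincide with the source or target of a horizontal arrow in a stable chain. For instance, when $\varepsilon(K)=1$, Hom's Lemma 3.2 produces bases with $\bm{w_0}=\bm{w_2'}$, so $\bm{w_0}$ is the target of a horizontal arrow --- the unstable chain is genuinely attached to the rest. Your claim that tensoring ``the stable part'' alone with the orbifold Type D structure already yields $n\cdot\mathrm{rank}\,\widehat{\mathit{HF}}(Y)$ therefore does not make sense as stated: $\widehat{\mathit{HF}}(Y)$ is computed from the \emph{whole} complex, including the unstable chain, and the two pieces cannot be separated except in the single case $\varepsilon(K)=0,\ r=0$ (where $\bm{w_0}=\bm{w_0'}$ is isolated from all stable chains, making the split genuine --- and this is exactly the case the paper splits off).

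Second, your case analysis via a character $\phi^Y$ of $H_1^{\mathrm{orb}}$ is not developed enough to carry the argument. The paper instead works directly with $\varepsilon(K)$ and the specific simplified bases furnished by Hom's Lemmas 3.2 and 3.3: when $\varepsilon(K)\neq 0$ or $r\neq 0$, those bases show that the graph $\Gamma_r$ for $\widehat{\mathit{CFA}}(E,\phi_r)$ has no coherently oriented cycles, so the Type A structure is bounded, and one proves the identity $\mathrm{rank}\,\widehat{\mathit{HFO}}=n\cdot\mathrm{rank}\,\widehat{\mathit{HF}}$ by a direct generator-by-generator comparison of $\ker$ and $\mathrm{im}$ for the two box-tensor differentials (the orbifold differential shifts the index $j\mapsto j+1$, so the kernel and image ranks each scale by exactly $n$). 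Your sketch does not explain why the unstable-chain attachment ``again scales by $n$'' in those cases; the actual reason is that without a coherent cycle there is nothing special to track --- the $n$-fold shift is a bijection on basis elements. You correctly identify that when $\varepsilon(K)=0$ and $r=0$ the unstable chain closes into an $n$-fold cycle contributing rank $2$ rather than $2n$, and that checking this on $0$-surgery on the unknot (which the paper does, as Example 3.2.2) is the right reduction; but without repairing the decomposition and making the ``otherwise'' case precise, the proof does not go through.
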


\noindent As an example, take $r=0$ and $K$ the unknot. Then $Y = S^2 \times S^1$ and $\varepsilon(K)=0$. Theorem \ref{relationshipHFhat} tells us that for every $n$, $\textrm{rank}\big(\widehat{\mathit{HFO}}(Y^{\textrm{orb}})\big)=2$.

For $3$-manifolds $Y$, it's well-known that $\widehat{\mathit{HF}}(Y)$ categorifies the order of $H_1(Y)$, see \cite{OS2}. We have an analogous result for a large class of $3$-orbifolds $Y^{\text{orb}}$:

\begin{Thm}\label{categorification}
There exists a relative $\mathbb{Z}_2$-grading on $\widehat{\mathit{HFO}}(Y^{\emph{orb}})$ so that if $Y^{\emph{orb}}$ has nullhomologous singular curve or comes from Dehn surgery on a framed knot in $S^3$, then up to sign $\chi\big(\widehat{\mathit{HFO}}(Y^{\emph{orb}})\big) = \left|H_1^{\emph{orb}}(Y^{\emph{orb}})\right|$.
\end{Thm}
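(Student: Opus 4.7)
My plan is to equip both the Type~A structure attached to the complement of $N$ and the Type~D structure attached to $N$ with relative $\mathbb{Z}_2$-gradings, and to verify that the induced $\mathbb{Z}_2$-grading on the box tensor product makes the differential of $\widehat{\mathit{HFO}}(Y^{\text{orb}})$ homogeneous of degree one. The Type~A side inherits the standard bordered relative $\mathbb{Z}_2$-grading from~\cite{LOT1}; for the Type~D structure attached to $N$, I would read off a $\mathbb{Z}_2$-grading from the explicit generators and $\delta^1$ maps used in the construction of $\widehat{\mathit{HFO}}$ and check by inspection that $\delta^1$ reverses the grading. This step is essentially combinatorial but is a prerequisite for everything that follows.

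With a grading in place, I would express $\chi\big(\widehat{\mathit{HFO}}(Y^{\text{orb}})\big)$ as a bilinear pairing of the two decategorified classes in the Grothendieck group $K_0$ of the torus algebra, using the fact that the box tensor product interacts well with Euler characteristics. The output is a closed-form integer in terms of (i) decategorified data from the Type~D structure for $N$, which encodes the singular order $n$, (ii) the framing data on $\partial N$, and (iii) the Euler characteristic of the Type~A structure of the complement, which is in turn controlled by $\chi\big(\widehat{\mathit{HF}}\big)$ of a suitable closure.

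Next I would compute $\left|H_1^{\text{orb}}(Y^{\text{orb}})\right|$ from a Mayer--Vietoris presentation gluing the orbifold homology of $N$ to $H_1$ of the complement. This splits naturally into three cases matching the \texttt{Proofpart} headings declared in the preamble: the generic case $\phi^Y(\mu) \neq 1$, the intermediate case $\phi^Y(\mu) = 1$ but $\phi^Y(h) \neq 1$, and the degenerate case $\phi^Y(\mu) = 1$ and $\phi^Y(h) = 1$, where $\mu$ is the meridian and $h$ the relevant longitude on $\partial N$. In Case~1 the orbifold relation $n\mu = 0$ combines with the presentation of $H_1(Y)$ to produce a finite group whose order can be read off directly; in Case~2 an extra $\mathbb{Z}/n$ summand appears because $\mu$ becomes torsion only at the orbifold level; in Case~3 the orbifold homology is infinite and the equality reduces to $0=0$. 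In each case I would simplify the pairing from the previous step and check it against $|H_1^{\text{orb}}|$ up to sign, using the nullhomologous-singular-curve or integer-surgery hypothesis precisely to guarantee that the Type~A side is available in a form compatible with the Alexander-type combinatorics produced by the pairing.

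The main obstacle will be the pairing step: extracting from the combinatorial definition of the Type~D structure for $N$ a clean closed form for its $K_0$-class as a function of $n$, and then pairing it against the Type~A structure of the complement sharply enough that the answer matches $|H_1^{\text{orb}}|$ in every case. The subsequent case analysis is then largely a presentation-of-abelian-groups calculation, but the bookkeeping of signs and torsion summands --- especially in Case~1, where the framing $r$ and the order $n$ interact in the Dehn-surgery setting --- will require care.
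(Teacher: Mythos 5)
Your plan starts in the right place (grade both bordered factors and compare Euler characteristics) but misses the single observation that makes the paper's argument short, and misreads the role the hypotheses play, so as written it would stall.

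The paper does not need a $K_0$ pairing or a three-case Mayer--Vietoris analysis. The type D structure $D_N$ for the orbifold solid torus has $n$ generators $\bm{x}_1,\dots,\bm{x}_n$ that are all in the same idempotent and can all be put at the \emph{same} $\mathbb{Z}_2$-grading $\textrm{gr}_D^{\textrm{orb}}(\bm{x}_i) = 1$, exactly matching Hom--Lidman--Watson's type D grading of the ordinary solid torus. Consequently, each generator $\bm{y}\otimes\bm{x}$ of $\widehat{\mathit{CF}}(|Y^{\textrm{orb}}|)$ corresponds to $n$ generators $\bm{y}\otimes\bm{x}_i$ of $\widehat{\mathit{CFO}}(Y^{\textrm{orb}})$ all with identical grading, and so up to sign $\chi\big(\widehat{\mathit{CFO}}(Y^{\textrm{orb}})\big) = n\cdot\chi\big(\widehat{\mathit{CF}}(|Y^{\textrm{orb}}|)\big)$. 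After that, the standard fact $|\chi(\widehat{\mathit{HF}}(|Y^{\textrm{orb}}|))| = |H_1(|Y^{\textrm{orb}}|)|$ (or $0$) does all the work. You never isolate this multiplication-by-$n$ statement; instead you set up abstract decategorified pairings that, if carried out, would only rediscover it.

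Second, you misidentify why the theorem restricts to nullhomologous singular curve or integral/rational surgery. You say these hypotheses guarantee the type A side is available in Alexander/knot-Floer form. That is wrong: the type A factor $\widehat{\mathit{CFA}}(E,\phi_{\partial E})$ is whatever it is, regardless of these hypotheses, and nothing in the categorification argument invokes $\mathit{CFK}^-$. The hypotheses are used purely on the topological side, to ensure the identity $|H_1^{\textrm{orb}}(Y^{\textrm{orb}})| = n\cdot|H_1(|Y^{\textrm{orb}}|)|$: in the nullhomologous case via $H_1^{\textrm{orb}} \cong H_1(|Y^{\textrm{orb}}|)\times\mathbb{Z}_n$, and in the surgery case via $H_1(Y)\cong\mathbb{Z}_p$, $H_1^{\textrm{orb}}\cong\mathbb{Z}_{np}$. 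Without the hypotheses, this proportionality can fail, which is the whole reason the theorem doesn't claim full generality. Finally, the three-case structure you borrow from the preamble declarations (Cases keyed to $\phi^Y(\mu)$ and $\phi^Y(h)$) is not part of this proof at all --- those environments appear to be vestigial and are never used; building your argument around them leads you away from the actual structure of the proof. And a small technical point: requiring that $\delta^1$ itself ``reverse the grading'' is not the right compatibility condition for type D structures, since $\delta^1$ lands in $\mathcal{A}\otimes N$ and the algebra carries its own grading data --- here $\textrm{gr}_D^{\textrm{orb}}$ is in fact constant on $D_N$.
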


Closely related to $\widehat{\mathit{HF}}$ is the plus version $\mathit{HF}^+$ of Heegaard Floer homology, and for $3$-manifolds $Y$ with $b_1 (Y) > 0$ it's known that $\mathit{HF}^+ (Y)$ categorifies the Turaev torsion invariant of $Y$ \cite{OS2}. Recently, the author extended the Turaev torsion invariant to $3$-orbifolds (with singular set a link) \cite{Wong}, so it is natural to ask if there is a homology theory for $3$-orbifolds generalizing $HF^+$ that categorifies this orbifold torsion invariant. The present paper can be thought of as a first step towards this goal.

Due to recent work of Hanselman, Rasmussen, and Watson \cite{HRW}, the bordered Floer invariants for $3$-manifolds with torus boundary can be thought of geometrically as decorated immersed curves on the punctured torus. Using this we get a geometric formulation of the orbifold homology invariant, the details of which will appear in a subsequent paper.

At the Perspectives in Bordered Floer Conference in May 2018, a connection between the orbifold invariant and Heegaard Floer with twisted coefficients was pointed out to the author by Matt Hedden and Adam Levine. This too will be written up in a later paper.

This paper is structured as follows. Section \ref{Sec: Background} collects the background on $3$-orbifolds, bordered Floer homology, and knot Floer homology that we will need, adapting some of it a bit to our situation. In Section \ref{Sec: Orbifold Invt} we define the orbifold invariant, prove Theorem \ref{definvariant}, and compute the invariant for several examples. In Section \ref{sec4} we prove Theorem \ref{relationshipHFhat} and give more examples. In Section \ref{sec5} we prove Theorem \ref{categorification}.

\begin{Ack} 
The author is grateful to Robert Lipshitz, Liam Watson, and Adam Levine for helpful conversations, and to Ina Petkova and Steve Boyer for encouragement and support.
\end{Ack}

\section{Background}\label{Sec: Background}

\subsection{3-orbifolds}\label{Sec:orbifoldsbackground}

Here we give a brief overiew of 3-orbifolds. For a more in-depth discussion, we refer the reader to \cite{Thurston, Scott, BMP, KL}. A \textit{3-orbifold} $Y^{\text{orb}}$ is a Hausdorff, second-countable space $\left|Y^{\text{orb}}\right|$ with an atlas $\{(U_i, \widetilde{U_i}, G_i, \phi_i)\}$ consisting of an open cover $\{U_i\}$ of $\left|Y^{\text{orb}}\right|$, connected and open sets $\widetilde{U_i} \subset \mathbb{R}^3$, continuous and effective actions of finite subgroups $G_i$ of $O(3)$ on $\widetilde{U_i}$, and homeomorphisms $\phi_i: \widetilde{U_i}/G_i \rightarrow U_i$. If $U_i \subset U_j$, then there is an injective homomorphism $f_{ji}: G_i \rightarrow G_j$ and a topological embedding $\widetilde{\phi}_{ji}:  \widetilde{U_i} \rightarrow  \widetilde{U_j}$, equivariant with respect to $f_{ji}$, that makes the following diagram commute:

\begin{center}
$\begin{CD}
\widetilde{U_i} @>\widetilde{\phi}_{ji}>> \widetilde{U_j}\\
@VVqV @VVqV\\
\widetilde{U_i}/G_i @>\phi_{ji}>> \widetilde{U_j}/G_j\\
@VV\phi_iV @VV\phi_jV\\
U_i @>incl>> U_j
\end{CD}$
\end{center}

\noindent Here $q$ is the quotient map and $\phi_{ji}$ is the map induced by $\widetilde{\phi}_{ji}$. Note the top square always commutes, so the overlapping condition is really about the bottom square. We call $\left|Y^{\text{orb}}\right|$ the \textit{underlying space} of $Y^{\text{orb}}$. We say a 3-orbifold is \textit{oriented} when we have the following: in each chart, $\widetilde{U_i}$ oriented, $G_i$ lies in $SO(3)$, and the action of $G_i$ on $\widetilde{U_i}$ preserves orientation, and on overlaps $U_i \subset U_j$ the embedding $\widetilde{\phi}_{ji}$ preserves orientation. The 3-orbifolds in this paper will be oriented. $Y^{\text{orb}}$ is \textit{connected} (respectively \textit{compact}) when $\left|Y^{\text{orb}}\right|$ is connected (respectively compact).

Given a point $p \in \left|Y^{\text{orb}}\right|$, let $(U, \widetilde{U}, G, \phi)$ be a chart containing $p$ and let $\widetilde{p}$ be a lift of $p$ to $\widetilde{U}$. Then the \textit{local group} $G_p$ is the isotropy group $\{g \in G : g \cdot \widetilde{p} = \widetilde{p}\}$. Note the isomorphism class of $G_p$ does not depend on the choice of chart or lift, so is well-defined. In particular, if we fix a chart but vary the lifts, then the local groups we get are all conjugate. The \textit{singular locus} $\Sigma Y^{\text{orb}}$ of $Y^{\text{orb}}$ is the set of all points $p$ in $|Y^{\text{orb}}|$ with nontrivial local group $G_p$. Note that if the singular locus is empty, then we recover the definition of a 3-manifold. In this paper we will focus on 3-orbifolds with singular locus a knot. By general theory every point on the knot has local group equal to $\mathbb{Z}_n$ for the same $n$. Furthermore, we can identify a neighborhood of the knot with $(D^2  \times S^1) / \mathbb{Z}_n$ where $\mathbb{Z}_n$ acts by rotations about the core circle $0 \times S^1$. Now let $E$ denote the complement of the interior of the neighborhood. Then $H_1^{\text{orb}}(Y^{\text{orb}})$ is defined to be $H_1(E)/ \langle \mu ^n \rangle$, where $\mu$ is a meridian of the singular knot. Note that when $n=1$, $Y^{\text{orb}}$ is just a 3-manifold and $H_1^{\text{orb}}(Y^{\text{orb}})$ is just $H_1(Y^{\text{orb}})$. 

As an example, consider the $n$-fold cyclic branched cover $\Sigma^n(K)$ of $K \subset S^3$. There is a natural action of $\mathbb{Z}_n$ on $\Sigma^n(K)$, and the quotient space $\Sigma^n(K) / \mathbb{Z}_n$ can be thought of as the 3-orbifold $(S^3, K, n)$, where the underlying space is $S^3$, the singular locus is $K$, and every point $y$ on $K$ has isotropy group $G_y$ equal to $\mathbb{Z}_n$. Furthermore, it's not hard to see that $H_1^{\text{orb}}(S^3, K, n) \cong \mathbb{Z}_n$.

Finally, an (orientation-preserving) \textit{homeomorphism} $f: (Y_1, K_1, n) \rightarrow (Y_2, K_2, n)$ between oriented 3-orbifolds $(Y_1, K_1, n)$ and $(Y_2, K_2, n)$ is an (orientation-preserving) homeomorphism $|f|: Y_1 \rightarrow Y_2$ between the underlying oriented 3-manifolds $Y_1$ and $Y_2$ that takes the singular curve $K_1$ to the singular curve $K_2$.   

\subsection{Bordered Heegaard Floer homology}

In this section we give an overview of the bordered Floer invariants. We focus on the torus boundary case because for the most part this is the setting we'll be working in. The details are covered in \cite{LOT1, LOT2, HRW}.

\subsubsection{\textbf{Algebraic preliminaries}}\label{AlgPrelim}

We start by recalling the two algebraic structures (Type D and Type A) that give rise to $\widehat{\mathit{CFD}}$ and $\widehat{\mathit{CFA}}$, the two bordered Floer invariants for the torus boundary case. Let $\mathcal{A}$ be the unital path algebra over $\mathbb{Z}_2$ associated to the quiver in Figure \ref{fig:1} modulo the relations $\rho_2 \rho_1, \rho_3 \rho_2$, in other words we only compose paths when the indices increase. As a $\mathbb{Z}_2$-vector space, $\mathcal{A}$ is generated by eight elements: the two idempotents $\iota_1$ and $\iota_2$, and the six ``Reeb" elements $\rho_1$, $\rho_2$, $\rho_3$, $\rho_{12}:=\rho_1 \rho_2$, $\rho_{23}:=\rho_2 \rho_3$, and $\rho_{123}:=\rho_1 \rho_2 \rho_3$. The multiplicative identity $\mathbf{1}$ in $\mathcal{A}$ is given by $\iota_1 + \iota_2$. We will also need to work with the subalgebra $\mathcal{I}$ generated by  $\iota_1$ and $\iota_2$, this is a commutative ring with multiplicative identity $\mathbf{1} = \iota_1 + \iota_2$.

\begin{figure}[h]
\centering
\begin{tikzcd}
\iota_1 \arrow[r, bend left=50, "\rho_1" ]
   \arrow[r, bend right=50, "\rho_3" ]
   & \iota_2 \arrow[l, "\rho_2" ']
\end{tikzcd}
\caption{Quiver for torus algebra $\mathcal{A}$} \label{fig:1}
\end{figure}
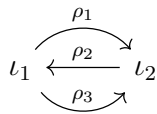

A \textit{(left) type D structure over $\mathcal{A}$} is a pair $\big(N, \delta_1\big)$ consisting of a finite-dimensional $\mathbb{Z}_2$-vector space $N$ that's equipped with a (left) action by $\mathcal{I}$ so that
\begin{equation*}
N = \iota_1N \oplus \iota_2N
\end{equation*}

\noindent as a vector space, together with a map $\delta_1: N \rightarrow \mathcal{A} \underset{\mathcal{I}}{\otimes} N$ that satisfies the following relation
\begin{equation*}
(\mu \otimes id_N) \circ (id_\mathcal{A} \otimes \delta_1) \circ \delta_1 = 0, 
\end{equation*}

\noindent where $\mu: \mathcal{A} \otimes \mathcal{A} \rightarrow \mathcal{A}$ denotes the multiplication in $\mathcal{A}$. Given a type D structure $\big(N, \delta_1\big)$ and $k \in \mathbb{N} \cup \{0\}$, we have maps
\begin{equation*}
\delta_k : N \rightarrow \underbrace{\mathcal{A} \underset{\mathcal{I}}{\otimes} \ldots \underset{\mathcal{I}}{\otimes} \mathcal{A}}_\text{k times} \underset{\mathcal{I}}{\otimes} N
\end{equation*}

\noindent defined inductively as follows: $\delta_0 = id_N$ and $\delta_k = (id_{\mathcal{A}^{\otimes k-1}} \otimes \delta_1) \circ \delta_{k-1}$. We say that $\big(N, \delta_1\big)$ is \textit{bounded} if $\delta_k \equiv 0$ for all $k$ sufficiently large. Note that the above relation on $\delta_1$ can be thought of as $(\mu \otimes id_N) \circ \delta_2 = 0$.

Type D structures $(N, \delta_1)$ can be represented by decorated directed graphs. First choose a basis for $N$ by choosing a basis for each subspace $\iota_{*} N$. Then for each basis element take a vertex. If the basis element lies in $\iota_1N$, decorate the vertex with $\bullet$, otherwise decorate the vertex with $\circ$. Whenever basis elements $x$ and $y$ are related in the following way: $\rho_I \otimes y$ is a summand of $\delta_1(x)$ with $\rho_I \in \{\rho_{\emptyset}:=\textbf{1}, \rho_1, \rho_2, \rho_3, \rho_{12}, \rho_{23}, \rho_{123}\}$, put a directed edge from vertex $x$ to vertex $y$, and decorate the edge with $\rho_I$. The relation on $\delta_1$ then translates into the following condition on the graph: for any directed path of length $2$, the product of the labels equals $0$ in $\mathcal{A}$. The higher maps $\delta_k$ can be recovered by following directed paths of length $k$.

We call a type D structure \textit{reduced} if the associated graph has no edges labelled \textbf{1}. Because of how the idempotents $\iota_1$ and $\iota_2$ interact with the Reeb elements $\rho_1$, $\rho_2$, and $\rho_3$ in $\mathcal{A}$, the graph of any reduced type D structure can only contain edges that look like
\begin{equation*}
\bullet \xrightarrow{\textrm{$\rho_1$}} \circ, \circ \xrightarrow{\textrm{$\rho_2$}} \bullet, \bullet \xrightarrow{\textrm{$\rho_3$}} \circ, \bullet \xrightarrow{\textrm{$\rho_{12}$}} \bullet, \circ \xrightarrow{\textrm{$\rho_{23}$}} \circ, \textrm{or } \bullet \xrightarrow{\textrm{$\rho_{123}$}} \circ.
\end{equation*}

\noindent Conversely, to every directed graph with vertices decorated by $\{\bullet, \circ\}$ and edges of the above form so that for any directed path of length 2 the product of the labels equals $0$ in $\mathcal{A}$, we can associate a (reduced) type D structure $(N, \delta_1)$ as follows. Take $N$ to be the $\mathbb{Z}_2$-vector space generated by the vertices. If we identify $\bullet$ with $\iota_1$ and $\circ$ with $\iota_2$, then we get the following action of $\mathcal{I}$ on $N$: for every vertex $x$ labelled by $\bullet$, set $\iota_1 \cdot x = x$ and $\iota_2 \cdot x = 0$, and for every vertex $x$ labelled by $\circ$, set $\iota_1 \cdot x = 0$ and $\iota_2 \cdot x = x$. The edges encode the map $\delta_1$, and it's clear that $(N, \delta_1)$ forms a reduced type D structure.

A \textit{(right) type A structure over $\mathcal{A}$} is a pair $\big(M, \{m_{k}\}_{k=1}^{\infty}\big)$ consisting of a finite-dimensional $\mathbb{Z}_2$-vector space $M$ that's equipped with a (right) action by $\mathcal{I}$ so that
\begin{equation*}
M = M\iota_1 \oplus M\iota_2
\end{equation*}

\noindent as a vector space, together with multiplication maps
\begin{equation*}
m_{k} : M \underset{\mathcal{I}}{\otimes} \underbrace{\mathcal{A} \underset{\mathcal{I}}{\otimes} \ldots \underset{\mathcal{I}}{\otimes} \mathcal{A}}_\text{k - 1 times} \rightarrow M
\end{equation*}
\noindent that satisfy the following relation for any $x \in M$, $k \in \mathbb{N}$, and $a_1,\ldots , a_{k-1} \in \mathcal{A}$:
\begin{equation*}
\begin{aligned}[t]
0 &= \sum\limits_{j=1}^k m_{k-j+1}\big(m_j(x \otimes a_1 \ldots \otimes a_{j-1}) \otimes a_j \otimes \ldots \otimes a_{k-1}\big) \\ & + \sum\limits_{j=1}^{k-2} m_{k-1}(x \otimes a_1 \ldots \otimes a_{j-1} \otimes a_j a_{j+1} \otimes a_{j+2} \otimes \ldots \otimes a_{k-1}).
\end{aligned}
\end{equation*}

\noindent A type A structure $\big(M, \{m_{k}\}_{k=1}^{\infty}\big)$ is said to be 
\begin{enumerate}

\item $\textit{unital}$ if

\begin{itemize}
\item $\begin{aligned}[t]
m_2(x, \mathbf{1}) = x, \textrm{and}
\end{aligned}$

\item $\begin{aligned}[t]
m_k(x, a_1, \dots , a_{k-1}) = 0, \textrm{ for } k \geq 3 \textrm{ and at least one } a_i = \mathbf{1}, \textrm{and}
\end{aligned}$
\end{itemize}

\item \textit{bounded} if $m_k \equiv 0$ for all $k$ sufficiently large.

\end{enumerate}

Using an algorithm by Hedden and Levine \cite[Theorem 2.2]{HL}, one can construct a (non-unital) type A structure $\big(M, \{m_{k}\}_{k=1}^{\infty}\big)$ from a (reduced) type D structure $(N, \delta_1)$. We keep $M$ the same as $N$, both in terms of underlying vector space and idempotent action, and dualize the map $\delta_1$ to maps $m_{k}$ by doing the following. First relabel the edges of the graph that's associated to $(N, \delta_1)$ by swapping indices $1$ and $3$, keeping index $2$ the same. Next represent every directed path in the new graph by a string of numbers, by concatenating the indices. For example, the directed path $\bullet\xrightarrow{\textrm{$\rho_1$}} \circ \xrightarrow{\textrm{$\rho_{21}$}} \circ$ gives the string $121$. Then rewrite every string of numbers as a string of increasing sequences $I=I_1, \ldots, I_{k-1}$ so that the last element of $I_j$ is bigger than the first element of $I_{j+1}$. For example, the string $121$ gets rewritten as $12, 1$. For every directed path with source vertex $x$, target vertex $y$, and associated string $I=I_1, \ldots, I_{k-1}$, we define $m_k(x \otimes \rho_{I_1} \otimes \ldots \otimes \rho_{I_{k-1}}) = y$. For everything else, we define the multiplication to be zero. As an example, consider the type D directed path $\overset{x}{\bullet} \xrightarrow{\textrm{$\rho_3$}} \circ \xrightarrow{\textrm{$\rho_{23}$}} \overset{y}{\circ}$. It gives rise to the multiplication $m_3(x, \rho_{12}, \rho_1)=y$. 

If $\big(M, \{m_{k}\}_{k=1}^{\infty}\big)$ is a type A structure over $\mathcal{A}$, $\big(N, \delta_1\big)$ is a type D structure over $\mathcal{A}$, and at least one of them is bounded, then we can form the \emph{box tensor product} $M \boxtimes N$, a $\mathbb{Z}_2$-chain complex $(M \underset{\mathcal{I}}{\otimes} N, \delta^{\boxtimes})$ with differential $\delta^{\boxtimes} : M \underset{\mathcal{I}}{\otimes} N \rightarrow M \underset{\mathcal{I}}{\otimes} N$ given by
\begin{equation*}
\delta^{\boxtimes}(x \otimes y) = \sum_{k=0}^{\infty} \big(m_{k+1} \otimes id_N\big)\big(x \otimes \delta_k(y)\big).
\end{equation*}

In addition to type D and type A structures over $\mathcal{A}$, we will also need to work with with \textit{type DA structures over $(\mathcal{A}, \mathcal{A})$}. This is a $\mathbb{Z}_2$-vector space $N$ with the structure of an $(\mathcal{I}, \mathcal{I})$-bimodule, together with maps 
\begin{equation*}
\delta^{k}_1 :N \underset{\mathcal{I}}{\otimes} \underbrace{\mathcal{A} \underset{\mathcal{I}}{\otimes} \ldots \underset{\mathcal{I}}{\otimes} \mathcal{A}}_\text{k - 1 times} \rightarrow \mathcal{A} \underset{\mathcal{I}}{\otimes} N
\end{equation*}

\noindent that satisfy a compatibility condition similar to the one for type D structures (see \cite{LOT2}, Definition 2.2.43). Similar to type A structures, a type DA structure $\big(N, \{\delta_{1}^k\}_{k=1}^{\infty}\big)$ is \textit{unital} if we have the following:

\begin{enumerate}
\item $\begin{aligned}[t]
\delta^{2}_1(x, \mathbf{1}) = \mathbf{1} \otimes x, \textrm{ and}
\end{aligned}$

\item $\begin{aligned}[t]
\delta^{k}_1(x, a_1, \dots , a_{k-1}) = 0, \textrm{ when } k \geq 3 \textrm{ and at least one } a_i = \mathbf{1}.
\end{aligned}$
\end{enumerate}

\noindent All of our type DA structures will be unital. Like with type D and type A structures, we can take the \textit{box tensor product} of a type DA structure with a type D structure, or the \textit{box tensor product} of a type A structure with a type DA structure, when at least one of the factors is bounded. For details, see \cite[Definition 2.3.9]{LOT2}.

\subsubsection{\textbf{Invariants for bordered 3-manifolds}}

A \textit{bordered 3-manifold} is a pair $(Y, \phi)$ consisting of a connected, compact, oriented 3-manifold $Y$ with connected boundary, together with a homeomorphism $\phi$ from a fixed model surface $F$ to the boundary of $Y$. Two bordered 3-manifolds $(Y_1, \phi_1)$ and $(Y_2, \phi_2)$ are called \textit{equivalent} if there is an orientation-preserving homeomorphism $\psi : Y_1 \rightarrow Y_2$ so that $\phi_2 = \psi |_{\partial} \circ \phi_1$.

As noted earlier, we will restrict to the case of torus boundary. Then $F$ is the oriented torus associated to the pointed matched circle $\mathcal{Z}$ in Figure \ref{fig:PointedMatchedCircle}, with 1-handles represented by $\alpha_1^{a}$ and $\alpha_2^{a}$, and orientation given by $\langle \alpha_1^{a}, \alpha_2^{a} \rangle$. If $\phi$ is orientation-preserving, $(Y, \phi)$ is said to be \textit{type A}, otherwise $(Y, \phi)$ is said to be \textit{type D}.

\begin{figure}[h]
\centering{
\resizebox{40mm}{!}{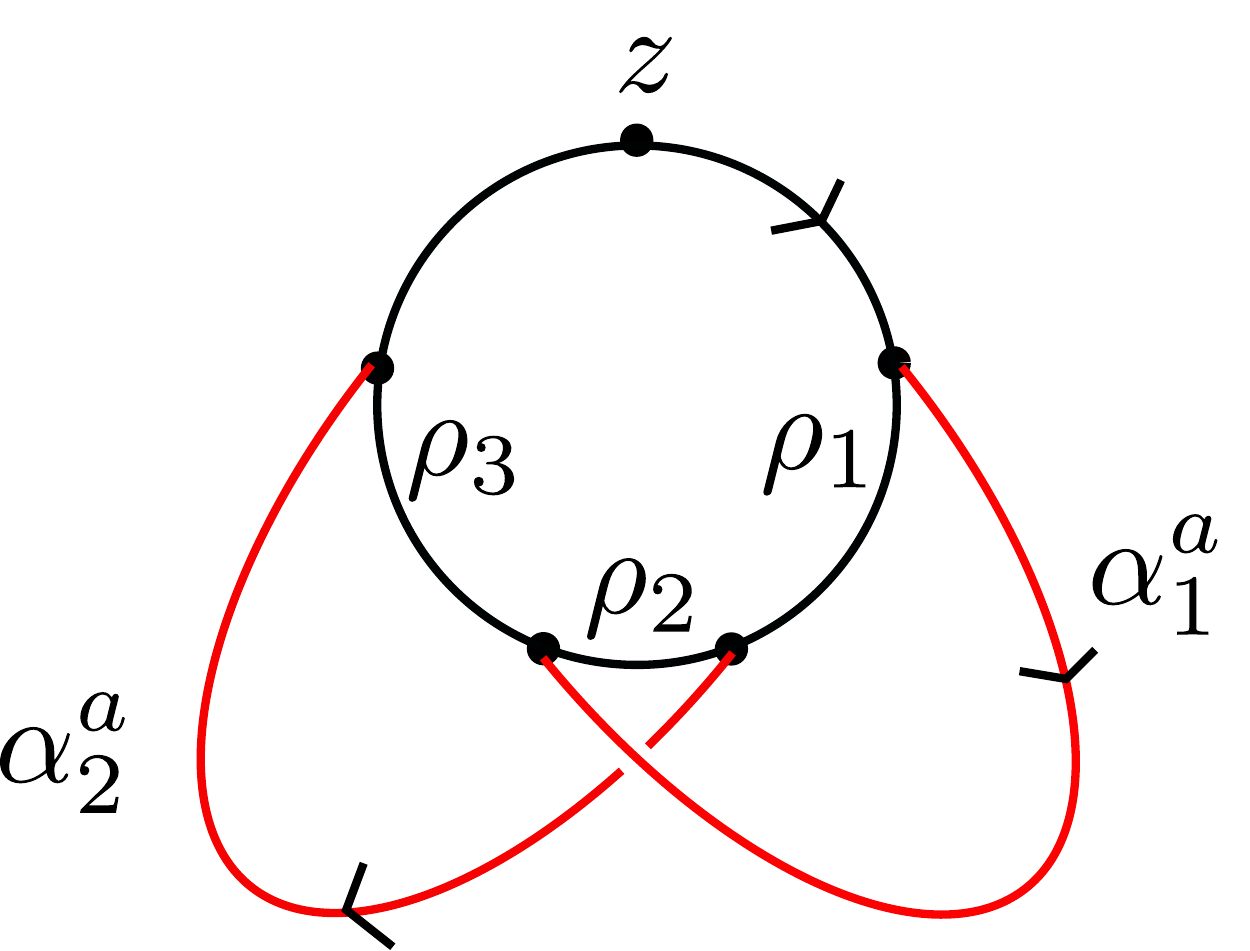}
\caption{Pointed matched circle $\mathcal{Z}$ for torus $F$}
\label{fig:PointedMatchedCircle}
}
\end{figure}

Any bordered 3-manifold $(Y, \phi)$ can be represented by a (sufficiently admissible) \textit{bordered Heegaard diagram} $\mathcal{H}$. This is a tuple 
\begin{equation*}
(\overline{\Sigma}; \underbrace{\underbrace{ \{ \alpha_1^c, \ldots, \alpha^c_{g-1} \} }_{\bm{\alpha}^c}; \underbrace{ \{ \alpha^{a}_1, \alpha^{a}_2 \}}_{\bm{\alpha}^a } }_{\bm{\alpha}}; \underbrace{ \{ \beta_1, \ldots, \beta_g\}}_{\bm{\beta} }; z)
\end{equation*}

\noindent consisting of 

\begin{itemize}
\item a connected, compact, oriented surface $\overline{\Sigma}$ of genus $g$ with connected boundary,
\item two sets $\bm{\alpha}^c$ and $\bm{\beta}$ of pairwise disjoint circles in the interior of $\overline{\Sigma}$,
\item pairwise disjoint properly embedded arcs $\alpha^{a}_1$ and $\alpha^{a}_2$ in $\overline{\Sigma}$, and
\item a point $z$ on $\partial \overline{\Sigma}$ missing the endpoints of $\alpha^{a}_1$ and $\alpha^{a}_2$
\end{itemize}

\noindent so that $\bm{\alpha}^c$ and $\bm{\alpha}^a$ are disjoint, and $ \overline{\Sigma} - \bm{\alpha} $ and $ \overline{\Sigma} - \bm{\beta} $ are connected. To recover $Y$, we attach 2-handles to $\overline{\Sigma} \times I$ along the $\bm{\alpha}^c$ circles in $\overline{\Sigma} \times \{0\}$ and the $\bm{\beta}$ circles in $\overline{\Sigma} \times \{1\}$. The parameterization $\phi$ of $\partial Y$ is specified by the pointed matched circle $(\partial \overline{\Sigma}, \alpha^{a}_1, \alpha^{a}_2, z)$ coming from $\mathcal{H}$, where $\partial \overline{\Sigma}$ is given the induced boundary orientation. If $(\partial \overline{\Sigma}, \alpha^{a}_1, \alpha^{a}_2, z)$ is identified with $\mathcal{Z}$, then $\phi$ is orientation-preserving, and $\mathcal{H}$ describes a type A bordered 3-manifold $(Y, \phi)$, otherwise we're identifying $(\partial \overline{\Sigma}, \alpha^{a}_1, \alpha^{a}_2, z)$ with $-\mathcal{Z}$, and we get a type D bordered 3-manifold $(Y, \phi)$. See Figure \ref{fig:BHD} for an example of a type D bordered 3-manifold.

\begin{figure}[h]
    \centering
    \begin{subfigure}[t]{0.5\textwidth}
        \centering
        \includegraphics[height=1.5in]{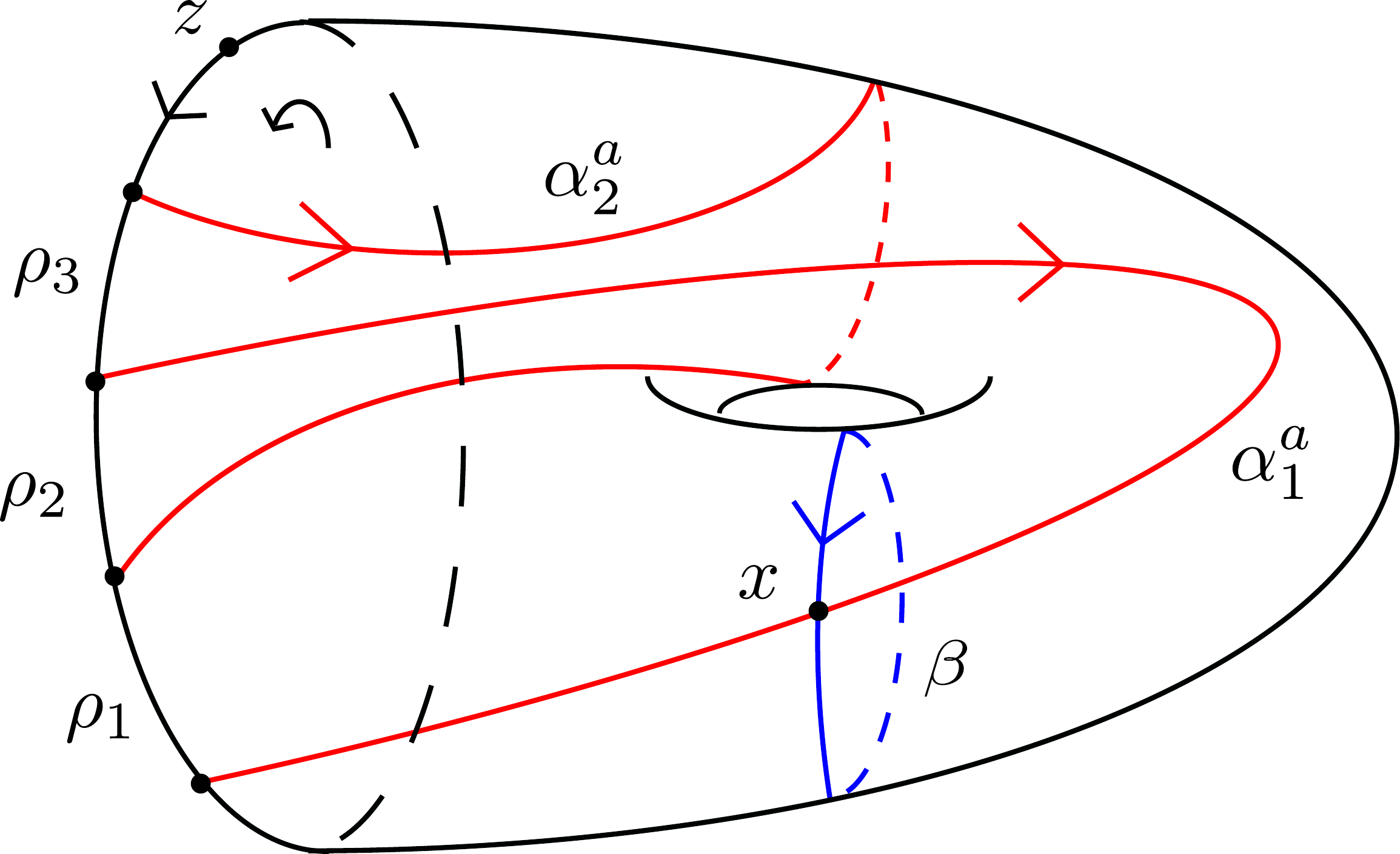}
        \caption{}
    \end{subfigure}%
    ~ 
    \begin{subfigure}[t]{0.5\textwidth}
        \centering
        \includegraphics[height=1.5in]{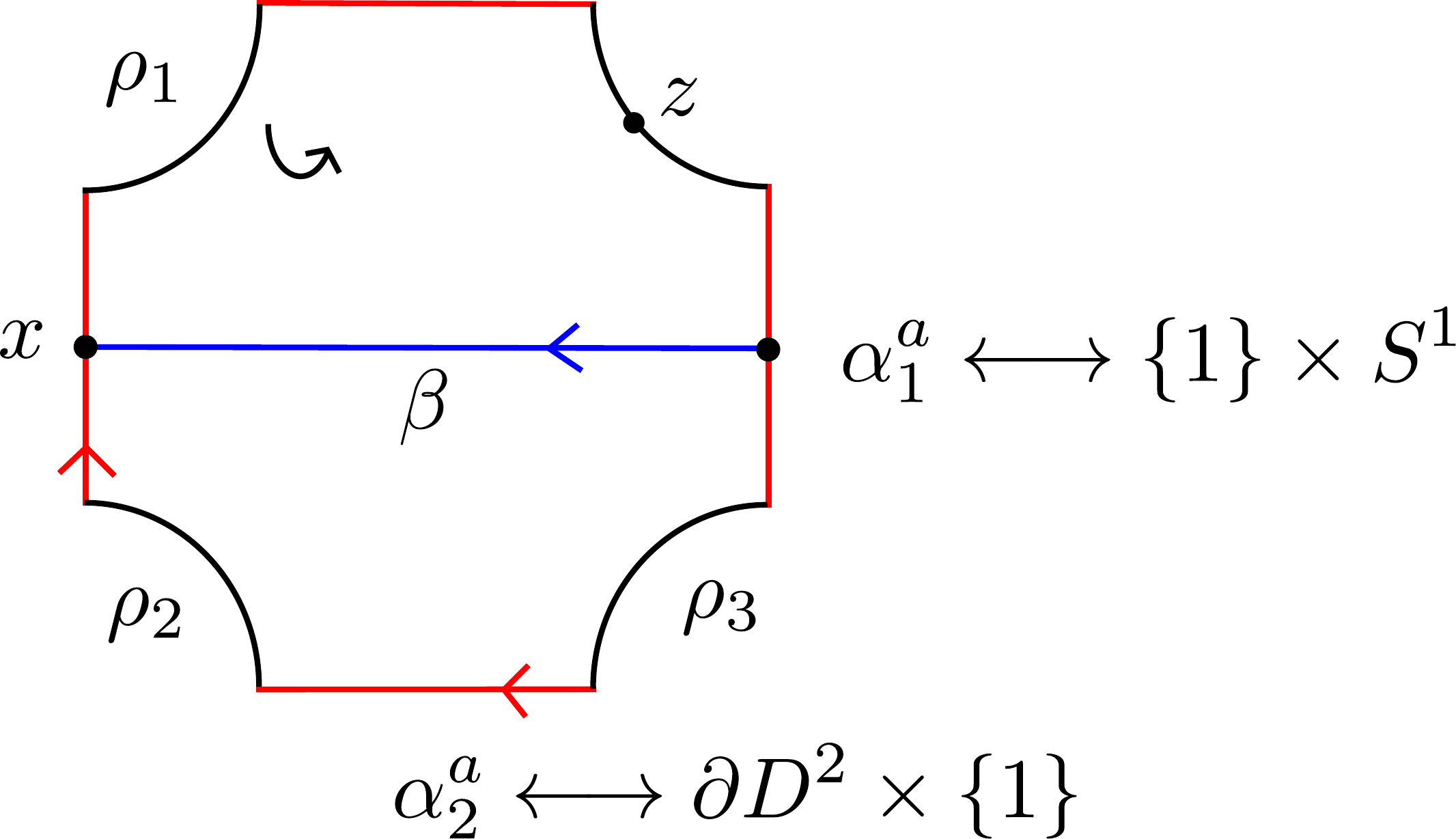}
        \caption{}
    \end{subfigure}
    \caption{On the left, a genus 1 bordered Heegaard diagram for $D^2 \times S^1$ with standard product orientation and boundary parameterized by $\alpha_1^a \mapsto \{1\} \times S^1$ and $\alpha_2^a \mapsto \partial D^2 \times \{1\}$. On the right, the same bordered Heegaard diagram thought of as a decorated square missing four corners with opposite sides identified.}
    \label{fig:BHD}
\end{figure}

Bordered Floer theory, as defined by Lipshitz, Ozsv\'{a}th, and D. Thurston in \cite{LOT1, LOT2}, associates to a bordered Heegaard diagram $\mathcal{H}$ representing a bordered 3-manifold $(Y, \phi)$ a type A structure $(\widehat{\mathit{CFA}}(\mathcal{H}), \{m_k\}_{k=1}^{\infty})$ if $(Y, \phi)$ is type A, and a type D structure $(\widehat{\mathit{CFD}}(\mathcal{H}), \delta_1)$ if $(Y, \phi)$ is type D. As $\mathbb{Z}_2$-vector spaces, $\widehat{\mathit{CFA}}(\mathcal{H})$ and $\widehat{\mathit{CFD}}(\mathcal{H})$ are generated by $g$-tuples $\bm{x}$ of points in $\bm{\alpha} \cap \bm{\beta}$ with one point on each $\bm{\alpha}^c$ circle, one point on each $\bm{\beta}$ circle, and one point on one of the $\bm{\alpha}^a$ arcs. The right $\mathcal{I}$-action on $\widehat{\mathit{CFA}}(\mathcal{H})$ is given by
\[
\bm{x} \cdot \iota_1 = \begin{cases}
    \bm{x}, & \text{if \textit{\textbf{x}} occupies the $\alpha^a_1$ arc} \\
    0,              & \text{otherwise}\end{cases}    
\]

\[
\bm{x} \cdot \iota_2 = \begin{cases}
    \bm{x}, & \text{if \textit{\textbf{x}} occupies the $\alpha^a_2$ arc} \\
    0,              & \text{otherwise}\end{cases}
\]

\noindent while the left $\mathcal{I}$-action on $\widehat{\mathit{CFD}}(\mathcal{H})$ is given by
\[
\iota_1 \cdot \bm{x} = \begin{cases}
    \bm{x}, & \text{if \textit{\textbf{x}} occupies the $\alpha^a_2$ arc} \\
    0,              & \text{otherwise}\end{cases}    
\]

\[
\iota_2 \cdot \bm{x} = \begin{cases}
    \bm{x}, & \text{if \textit{\textbf{x}} occupies the $\alpha^a_1$ arc} \\
    0,              & \text{otherwise.}\end{cases}
\]
   
\noindent The type A and type D structure maps
\[
m_{k} : \widehat{\mathit{CFA}}(\mathcal{H}) \underset{\mathcal{I}}{\otimes} \mathcal{A} \underset{\mathcal{I}}{\otimes} \ldots \underset{\mathcal{I}}{\otimes} \mathcal{A} \rightarrow \widehat{\mathit{CFA}}(\mathcal{H})
\]

\noindent and
\[
\delta_1: \widehat{\mathit{CFD}}(\mathcal{H}) \rightarrow \mathcal{A} \underset{\mathcal{I}}{\otimes} \widehat{\mathit{CFD}}(\mathcal{H}) 
 \]
 
\noindent are defined by counting certain $J$-holomorphic curves in $\Sigma \times [0,1] \times \mathbb{R}$, for a sufficiently nice almost complex structure $J$ on $\Sigma \times [0,1] \times \mathbb{R}$, with $\Sigma$ the interior of $\overline{\Sigma}$. Details can be found in \cite[Chapters 6 and 7]{LOT1}. Up to homotopy equivalence, the type A and type D structures $(\widehat{\mathit{CFA}}(\mathcal{H}), \{m_k\}_{k=1}^{\infty})$ and $(\widehat{\mathit{CFD}}(\mathcal{H}),\delta_1)$ don't depend on the choice of $J$, and so we get invariants of $\mathcal{H}$. Because different bordered Heegaard diagrams for equivalent bordered 3-manifolds produce homotopy equivalent bordered invariants, this process gives us an invariant of any bordered 3-manifold $(Y, \phi)$ considered up to equivalence. If $(Y, \phi)$ is of type A, we denote the invariant by $\widehat{\mathit{CFA}}(Y, \phi)$, and if $(Y, \phi)$ is of type D, we denote the invariant by $\widehat{\mathit{CFD}}(Y, \phi)$.

As an example, consider $D^2 \times S^1$ with boundary parameterization $\psi: F \rightarrow \partial (D^2 \times S^1)$ defined by $\alpha_1^a \mapsto \{1\}\times S^1$ and $\alpha_2^a \mapsto  \partial D^2 \times \{1\} $. Using the bordered Heegaard diagram in Figure \ref{fig:BHD} for $(D^2 \times S^1, \psi)$, we get that $\widehat{\mathit{CFD}}(D^2 \times S^1, \psi)$ is given by the decorated, directed graph in Figure \ref{fig:CFD_Solid_Torus}.

\begin{figure}[h]
\centering{
\resizebox{24mm}{!}{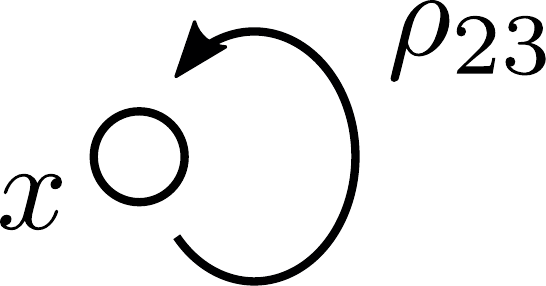}
\caption{The type D structure for $(D^2 \times S^1, \psi)$}
\label{fig:CFD_Solid_Torus}
}
\end{figure}

When we vary the parameterization of the boundary of $(Y, \phi)$, the bordered invariants $\widehat{\mathit{CFA}}(Y, \phi)$ and $\widehat{\mathit{CFD}}(Y, \phi)$ change by a type DA structure over $(\mathcal{A}, \mathcal{A})$. Specifically, given an orientation-preserving homeomorphism $\psi$ of the model torus $F$, there exists a type DA structure $\widehat{\mathit{CFDA}}(\psi)$ so that
\[
\widehat{\mathit{CFDA}}(\psi) \boxtimes \widehat{\mathit{CFD}}(Y, \phi) \simeq \widehat{\mathit{CFD}}(Y, \phi \circ \psi)
\]
\noindent as type D structures over $\mathcal{A}$, and
\begin{equation}\label{changeofparameqn}
\widehat{\mathit{CFA}}(Y, \phi) \boxtimes \widehat{\mathit{CFDA}}(\psi) \simeq \widehat{\mathit{CFA}}(Y, \phi \circ \psi^{-1})
\end{equation}
\noindent as type A structures over $\mathcal{A}$. For details, see \cite[Theorem 2]{LOT2}.
    
Given a type A bordered 3-manifold $(Y_1, \phi_1)$ and a type D bordered 3-manifold $(Y_2, \phi_2)$, we can build a closed, oriented, smooth 3-manifold $Y$ by gluing $Y_1$ and $Y_2$ together along their boundaries via the homeomorphism $\phi_2 \circ \phi_1^{-1} : \partial Y_1 \rightarrow \partial Y_2$. To the bordered pieces $(Y_1, \phi_1)$ and $(Y_2, \phi_2)$ we associate the bordered invariants $\widehat{\mathit{CFA}}(Y_1, \phi_1)$ and $\widehat{\mathit{CFD}}(Y_2, \phi_2)$, and to $Y$ we associate the hat flavor of Heegaard Floer homology $\widehat{\mathit{HF}}(Y)$. The \textit{pairing theorem} tells us that if at least one of the bordered invariants is bounded, then $\widehat{\mathit{HF}}(Y)$ is determined by $\widehat{\mathit{CFA}}(Y_1, \phi_1)$ and $\widehat{\mathit{CFD}}(Y_2, \phi_2)$: 
\begin{equation}\label{pairing}
\widehat{\mathit{HF}}(Y) \cong H_{*}\big( \widehat{\mathit{CFA}}(Y_1) \boxtimes \widehat{\mathit{CFD}}(Y_2)\big).
\end{equation}

\noindent This will motivate our definition of the orbifold Heegaard Floer invariant.

\subsection{$\widehat{\mathit{CFA}}$ of bordered knot exteriors}\label{knotexteriors}

Let $Y$ be the exterior of a knot $K \subset S^3$. Given $r \in \mathbb{Z}$, let $\phi_r: F \rightarrow \partial Y$ to be an orientation-preserving parameterization that sends $\alpha_1^a$ to a meridian $m$ of $K$ and $\alpha_2^a$ to an $r$-framed longitude $\gamma$ of $K$. In this section we recall the algorithm for computing $\widehat{\mathit{CFA}}(Y, \phi_r)$ from the knot Floer chain complex $\mathit{CFK}^-(K)$. This is due to Lipshitz, Ozsv\'{a}th, and D. Thurston in \cite[Theorems 11.26 and A.11]{LOT1} \big(technically their algorithm computes $\widehat{\mathit{CFD}}(Y, \alpha_1^a \mapsto \gamma, \alpha_2^a \mapsto m)$, but by \cite[Theorem 2.2]{HL} we can pass from $\widehat{\mathit{CFD}}(Y, \alpha_1^a \mapsto \gamma, \alpha_2^a \mapsto m)$ to $\widehat{\mathit{CFA}}(Y, \phi_r)$\big).

We start by recalling the definition of $\mathit{CFK}^-(K)$. The details can be found in \cite{OS8, Ras}. First take a doubly-pointed Heegaard diagram $(\Sigma, \bm{\alpha}, \bm{\beta}, w, z)$ of genus $g$ for $K \subset S^3$. If we ignore the base point $z$, then we get a pointed Heegaard diagram $(\Sigma, \bm{\alpha}, \bm{\beta}, w)$ of genus $g$ for $S^3$. To this we can associate the $\mathbb{Z}_2 [U]$-chain complex $(\mathit{CF}^-(S^3), \partial^-)$, where 

\begin{itemize}
\item $\mathit{CF}^-(S^3)$ is the finite-dimensional $\mathbb{Z}_2 [U]$-vector space generated by $g$-tuples $\bm{x}$ of points in $\bm{\alpha} \cap \bm{\beta}$ with one point on each $\bm{\alpha}$ circle and one point on each $\bm{\beta}$ circle, and
\item the diffferential $\partial^-: \mathit{CF}^-(S^3) \rightarrow \mathit{CF}^-(S^3)$ is given by counting certain pseudo-holomorphic curves in $\mathrm{Sym}^g (\Sigma)$.
\end{itemize}

\noindent When we bring back the $z$ base point, which we should think of as representing the knot $K$, we get a $\mathbb{Z}$-grading on $\mathit{CF}^-(S^3)$, called the \textit{Alexander grading}. This is a function $A: \mathit{CF}^-(S^3) \rightarrow \mathbb{Z}$ that satisfies the property $A(U^i \bm{x})=A(\bm{x})-i$. Using $A$, we can define a $\mathbb{Z}$-filtration $\{\mathcal{F}_i\}$ on the $\mathbb{Z}_2 [U]$-chain complex $(\mathit{CF}^-(S^3), \partial^-)$, where each $\mathcal{F}_i$ is a $\mathbb{Z}_2 [U]$-module and $\partial^{-}(\mathcal{F}_i) \subseteq \mathcal{F}_i$. Then $\mathit{CFK}^-(K)$ is defined to be the $\mathbb{Z}_2 [U]$-chain complex $(\mathit{CF}^-(S^3), \partial^-)$ with this $\mathbb{Z}$-filtration $\{\mathcal{F}_i\}$.

By negating the powers of $U$, we get a second $\mathbb{Z}$-filtration $I$ on $\mathit{CFK}^-(K)$. We can visualize $\mathit{CFK}^-(K)$, together with the $I$ filtration, as a directed graph in $\mathbb{Z} \times \mathbb{Z} \subset \mathbb{R} \times \mathbb{R}$ as follows. First pick a basis $\{\bm{x_0}, \ldots, \bm{x_{2n}}\}$ for $\mathit{CFK}^-(K)$ over $\mathbb{Z}_2 [U]$ as above. Then $\{U^i \bm{x_k}\ | \ i \in \mathbb{Z} \textrm{ and } k=0,  \ldots, 2n\}$ is a basis for $\mathit{CFK}^-(K)$ over $\mathbb{Z}_2$, and it's these elements that form the vertices of our graph, with $U^i \bm{x_k}$ at point $\big(I(U^i \bm{x_k}), A(U^i \bm{x_k})\big)= \big(-i, A(\bm{x_k})-i\big)$ in $\mathbb{Z} \times \mathbb{Z} \subset \mathbb{R} \times \mathbb{R}$. The edges of the graph are given by the differential $\partial^-$, namely we draw a directed edge from $U^i \bm{x_k}$ to $U^j \bm{x_l}$ if $\partial^- (U^i \bm{x_k})$ contains $U^j \bm{x_l}$ as a summand. Note that the graph of $\mathit{CFK}^-(K)$ lies in the part of the $(I,A)$-plane with $I \leq 0$. 

Let $C^{\textrm{vert}}$ be the $\mathbb{Z}_2$-chain complex $\mathit{CFK}^-(K) / \big(U \cdot \mathit{CFK}^-(K)\big)$. We'll denote the differential by $\partial^{\textrm{vert}}$, and call $C^{\textrm{vert}}$ as the \textit{vertical complex} associated to $\mathit{CFK}^-(K)$. If we think of $\mathit{CFK}^-(K)$ as a directed graph in $\mathbb{Z} \times \mathbb{Z} \subset \mathbb{R} \times \mathbb{R}$, then the graph of $C^{\textrm{vert}}$ is the part of $\mathit{CFK}^-(K)$ that lies on the vertical $A$-axis (with directed edges pointing down).

To $\mathit{CFK}^-(K)$ with the Alexander filtration $\{\mathcal{F}_i\}$, we can associate the finitely generated, free $\mathbb{Z}_2[U]$-module 
\[
\textrm{gr}(\mathit{CFK}^-(K)) := \bigoplus\limits_{i \in \mathbb{Z}} \mathcal{F}_i / \mathcal{F}_{i-1}.
\]
\noindent Given any $x \in \mathit{CFK}^-(K)$, denote by $[x]$ the image of $x$ in $\mathcal{F}_{A(x)} / \mathcal{F}_{A(x)-1}$. We call a basis $\{\bm{x_0}, \ldots, \bm{x_{2n}}\}$ for $\mathit{CFK}^-(K)$ over $\mathbb{Z}_2[U]$ \textit{filtered} if $\{[\bm{x_0}], \ldots, [\bm{x_{2n}}]\}$ is a basis for $\textrm{gr}(\mathit{CFK}^-(K))$. We will be interested in filtered bases for $\mathit{CFK}^-(K)$ that take a particularly simple form, which we now describe.

Let $\mathit{CFK}^{\infty}(K)$ denote the $\mathbb{Z}_2[U, U^{-1}]$-chain complex $\mathit{CFK}^-(K) \otimes_{\mathbb{Z}_2[U]} \mathbb{Z}_2[U, U^{-1}]$. There is a natural way to extend the Alexander and $I$ filtrations on $\mathit{CFK}^-(K)$ to $\mathit{CFK}^{\infty}(K)$. Then we can view $\mathit{CFK}^{\infty}(K)$ as a directed graph in $\mathbb{Z} \times \mathbb{Z} \subset \mathbb{R} \times \mathbb{R}$, with $\mathit{CFK}^-(K)$ as a subgraph. To $\mathit{CFK}^{\infty}(K)$ we can associate the $\mathbb{Z}_2$-chain complex
\[
C^{\textrm{horz}} := \mathcal{F}_0 \big(\mathit{CFK}^{\infty}(K)\big) / \mathcal{F}_{-1} \big(\mathit{CFK}^{\infty}(K)\big)
\]
with differential denoted by $\partial^{\textrm{horz}}$. We'll refer to this as the \textit{horizontal complex} associated to $\mathit{CFK}^{\infty}(K)$. If we view $\mathit{CFK}^{\infty}(K)$ as a directed graph in $\mathbb{Z} \times \mathbb{Z} \subset \mathbb{R} \times \mathbb{R}$, then $C^{\textrm{horz}}$ can be thought of as the part of $\mathit{CFK}^{\infty}(K)$ lying on the horizontal $I$-axis (with directed edges pointing to the left).

We're now ready to define those nice filtered bases for $\mathit{CFK}^-(K)$. Let $\{\bm{x_0}, \ldots, \bm{x_{2n}}\}$ be a filtered basis for $\mathit{CFK}^-(K)$, and let $\{\overline{\bm{x_0}}, \ldots, \overline{\bm{x_{2n}}}\}$ denote the induced basis for the vertical complex $C^{\textrm{vert}}$. We define $\{\bm{x_0}, \ldots, \bm{x_{2n}}\}$ to be \textit{vertically simplified} if each basis element $\overline{\bm{x_i}}$ satisfies one of the following:

\begin{itemize}
\item $\overline{\bm{x_i}} \in \textrm{im}(\partial ^{\textrm{vert}}) \subseteq \textrm{ker}(\partial^{\textrm{vert}})$ and $\partial ^{\textrm{vert}}(\overline{\bm{x_{i-1}}})=\overline{\bm{x_i}}$,
\item $\overline{\bm{x_i}} \in \textrm{ker}(\partial^{\textrm{vert}})$, but $\overline{\bm{x_i}} \notin \textrm{im}(\partial ^{\textrm{vert}})$, or
\item $\overline{\bm{x_i}} \notin\textrm{ker}(\partial^{\textrm{vert}})$ and $\partial ^{\textrm{vert}}(\overline{\bm{x_{i}}})=\overline{\bm{x_{i+1}}}$.
\end{itemize}

\noindent When $\partial ^{\textrm{vert}}(\overline{\bm{x_{i}}})=\overline{\bm{x_{i+1}}}$, we say that there is a \textit{vertical arrow} from $\bm{x_i}$ to $\bm{x_{i+1}}$ of length $A(\bm{x_i}) - A(\bm{x_{i+1}})$. Because $H_{*}(C^{\textrm{vert}}) \cong \mathbb{Z}_2$ and $\partial ^{\textrm{vert}}$ pairs up basis elements in $\{\overline{\bm{x_0}}, \ldots, \overline{\bm{x_{2n}}}\}$, there is a distinguished basis element in $\{\overline{\bm{x_0}}, \ldots, \overline{\bm{x_{2n}}}\}$ with no incoming and outgoing vertical arrows. Without loss of generality, we assume it's $\overline{\bm{x_0}}$, and we call $\bm{x_0}$ the $\textit{generator}$ of the vertical complex $C^{\textrm{vert}}$.

There is a horizontal analogue of the above definition. Given a filtered basis $\{\bm{y_0}, \ldots, \bm{y_{2n}}\}$ for $\mathit{CFK}^-(K)$, we can define a basis $\{\overline{U^{A(\bm{y_0})} \bm{y_0}}, \ldots, \overline{U^{A(\bm{y_{2n}})} \bm{y_{2n}}}\}$ for $C^{\textrm{horz}}$. Then $\{\bm{y_0}, \ldots, \bm{y_{2n}}\}$ is called \textit{horizontally simplified} if each basis element $\overline{U^{A(\bm{y_i})} \bm{y_i}}$ satisfies one of the following:

\begin{itemize}
\item $\overline{U^{A(\bm{y_i})} \bm{y_i}} \in \textrm{im}(\partial ^{\textrm{horz}}) \subseteq \textrm{ker}(\partial^{\textrm{horz}})$ and $\partial ^{\textrm{horz}}\big(\overline{U^{A(\bm{y_{i-1}})} \bm{y_{i-1}}}\big)=\overline{U^{\bm{A(y_i)}} \bm{y_i}}$,
\item $\overline{U^{A(\bm{y_i})} \bm{y_i}}\in \textrm{ker}(\partial^{\textrm{horz}})$, but $\overline{U^{\bm{A(y_i)}} \bm{y_i}} \notin \textrm{im}(\partial ^{\textrm{horz}})$, or
\item $\overline{U^{A(\bm{y_i})} \bm{y_i}} \notin \textrm{ker}(\partial^{\textrm{horz}})$ and $\partial ^{\textrm{horz}}\big(\overline{U^{\bm{A(y_i)}} \bm{y_i}}\big)=\overline{U^{A(\bm{y_{i+1}})} \bm{y_{i+1}}}$.
\end{itemize}

\noindent When $\partial ^{\textrm{horz}}\big(\overline{U^{A(\bm{y_i})} \bm{y_i}}\big)=\overline{U^{A(\bm{y_{i+1}})} \bm{y_{i+1}}}$, we say that there is a \textit{horizontal arrow} from $\overline{U^{A(\bm{y_i})} \bm{y_i}}$ to $\overline{U^{A(\bm{y_{i+1}})} \bm{y_{i+1}}}$ of length $A(\bm{y_{i+1}}) - A(\bm{y_{i}})$. Like in the vertical case, $H_{*}(C^{\textrm{horz}}) \cong \mathbb{Z}_2$ and $\partial ^{\textrm{horz}}$ pairs up basis elements in $\{\overline{U^{A(\bm{y_0})} \bm{y_0}}, \ldots, \overline{U^{A(\bm{y_{2n}})} \bm{y_{2n}}}\}$, so there is a distinguished basis element in $\{\overline{U^{A(\bm{y_0})} \bm{y_0}}, \ldots, \overline{U^{A(\bm{y_{2n}})} \bm{y_{2n}}}\}$ with no incoming and outgoing horizontal arrows. Without loss of generality, we assume it's $\overline{U^{A(\bm{y_0})} \bm{y_0}}$, and we call $\bm{y_0}$ the $\textit{generator}$ of the horizontal complex $C^{\textrm{horz}}$.

We can now explain how to go from $\mathit{CFK}^-(K)$ to a decorated, directed graph that describes $\widehat{\mathit{CFA}}(Y, \phi_r)$. First, take a vertically simplified basis $\{\bm{w_i}\}$ for $\mathit{CFK}^-(K)$. Since we can identify the vertical complex $C^{\textrm{vert}}$ with $\widehat{\mathit{CFA}}(Y, \phi_r) \cdot \iota_1$, $\{\bm{w_i}\}$ (or really $\{\overline{\bm{w_i}}\}$) induces a basis for $\widehat{\mathit{CFA}}(Y, \phi_r) \cdot \iota_1$. We represent each of these basis elements in $\widehat{\mathit{CFA}}(Y, \phi_r) \cdot \iota_1$ by a $\bullet$-labelled vertex. Next, for each vertical arrow from $\bm{w_i}$ to $\bm{w_{i+1}}$ of length $\ell_i$, we introduce $\ell_i$ basis elements $\bm{\kappa_1^i}, \ldots, \bm{\kappa_{\ell_i}}^i$ for $\widehat{\mathit{CFA}}(Y, \phi_r) \cdot \iota_2$ (thought of as vertices labelled by $\circ$) and differentials
\[
\stackrel{\bm{w_i}}{\bullet} \xrightarrow{\textrm{$\rho_3$}} \stackrel{\bm{\kappa_1^i}}{\circ} \xleftarrow{\textrm{$\rho_{21}$}} \ldots \xleftarrow{\textrm{$\rho_{21}$}} \stackrel{\bm{\kappa_{\ell_i}^i}}{\circ} \xleftarrow{\textrm{$\rho_{321}$}} \stackrel{\bm{w_{i+1}}}{\bullet}.    
\]
\indent Now take a horizontally simplified basis $\{\bm{w_i'}\}$ for $\mathit{CFK}^-(K)$. In a similar way, we can identify the horizontal complex $C^{\textrm{horz}}$ with $\widehat{\mathit{CFA}}(Y, \phi_r) \cdot \iota_1$, and so $\{\bm{w_i'}\}$ induces a basis for $\widehat{\mathit{CFA}}(Y, \phi_r) \cdot \iota_1$. We'll think of each of these basis element in $\widehat{\mathit{CFA}}(Y, \phi_r) \cdot \iota_1$ as a vertex labelled by $\bullet$. For each horizontal arrow from $\bm{w_i'}$ to $\bm{w_{i+1}'}$ of length $\ell_i'$, we introduce $\ell_i'$ basis elements $\bm{\lambda_1^i}, \ldots, \bm{\lambda_{\ell_i'}^i}$ for $\widehat{\mathit{CFA}}(Y, \phi_r) \cdot \iota_2$ (thought of as vertices labelled by $\circ$) and differentials
\[
\stackrel{\bm{w_i'}}{\bullet} \xrightarrow{\textrm{$\rho_1$}} \stackrel{\bm{\lambda_1^i}}{\circ} \xrightarrow{\textrm{$\rho_{21}$}} \ldots \xrightarrow{\textrm{$\rho_{21}$}} \stackrel{\bm{\lambda_{\ell_i'}^i}}{\circ} \xrightarrow{\textrm{$\rho_2$}} \stackrel{\bm{w_{i+1}'}}{\bullet}.    
\]
\indent The graph of $\widehat{\mathit{CFA}}(Y, \phi_r)$ contains one more component called the \textit{unstable chain} running from the generator $\bm{w_0}$ of the vertical complex to the generator $\bm{w_0'}$ of the horizontal complex. What this looks like depends on the integer $2\tau(K)-r$, where $\tau(K)$ is an integer-valued invariant of $K$ due to Ozsv\'{a}th and Szab\'{o} in \cite{OS4} (for a quick explanation see Section \ref{epsiloninvt}).

\begin{itemize}
\item Suppose $r < 2 \tau(K)$. Let $d = 2 \tau(K) - r >0$. Then we introduce $d$ basis elements $\bm{\gamma_1}, \ldots, \bm{\gamma_d}$ for $\widehat{\mathit{CFA}}(Y, \phi_r) \cdot \iota_2$ (thought of as vertices labelled by $\circ$) and differentials
\[
\stackrel{\bm{w_0}}{\bullet} \xrightarrow{\textrm{$\rho_3$}} \stackrel{\bm{\gamma_1}}{\circ} \xleftarrow{\textrm{$\rho_{21}$}} \ldots \xleftarrow{\textrm{$\rho_{21}$}} \stackrel{\bm{\gamma_{d}}}{\circ} \xleftarrow{\textrm{$\rho_1$}} \stackrel{\bm{w_0'}}{\bullet}.
\]
\item Suppose $r > 2 \tau(K)$. Let $d = r - 2 \tau(K) >0$. Then we introduce $d$ basis elements $\bm{\gamma_1}, \ldots, \bm{\gamma_d}$ for $\widehat{\mathit{CFA}}(Y, \phi_r) \cdot \iota_2$ (thought of as vertices labelled by $\circ$) and differentials
\[
\stackrel{\bm{w_0}}{\bullet} \xrightarrow{\textrm{$\rho_{321}$}} \stackrel{\bm{\gamma_1}}{\circ} \xrightarrow{\textrm{$\rho_{21}$}} \ldots \xrightarrow{\textrm{$\rho_{21}$}} \stackrel{\bm{\gamma_{d}}}{\circ} \xrightarrow{\textrm{$\rho_2$}} \stackrel{\bm{w_{0}'}}{\bullet}.
\]
\item Finally suppose $r = 2 \tau(K)$. Then the unstable chain from $\bm{w_0}$ to $\bm{w_0'}$ takes the form
\[
\stackrel{\bm{w_0}}{\bullet} \xrightarrow{\textrm{$\rho_{32}$}} \stackrel{\bm{w_0'}}{\bullet}.
\] 
\end{itemize}

\noindent Note that $\widehat{\mathit{CFA}}(Y, \phi_r) \cdot \iota_2$ has $\mathbb{Z}_2$-dimension $(\sum_i \ell_i + \ell_i') + |2 \tau(K) - r|$ and that the elements $\bm{\kappa_e^i}, \bm{\lambda_f^i}$, and $\bm{\gamma_g}$ introduced above form a basis for $\widehat{\mathit{CFA}}(Y, \phi_r) \cdot \iota_2$.

\subsection{The knot invariant $\varepsilon$}\label{epsiloninvt}

In \cite[Section 3]{Hom}, Hom defined a $\{-1, 0, 1\}$-valued invariant $\varepsilon(K)$ for knots $K \subset S^3$ in terms of $\tau(K)$ and two other knot invariants $\nu(K)$ \cite{OS10} and $\nu'(K)$ \cite{Hom} coming from the knot Floer complex $\mathit{CFK}^{\infty}(K)$ for $K$. In this subsection we recall the definition of $\varepsilon(K)$. Throughout, we'll think of $\mathit{CFK}^{\infty}(K)$, with its two $\mathbb{Z}$-filtrations $I$ and $A$, as a directed graph in $\mathbb{Z} \times \mathbb{Z}$, with $I$ represented by the first component and $A$ by the second.

Given $S \subseteq \mathbb{Z} \times \mathbb{Z}$, one can consider the free $\mathbb{Z}_2$-vector space $C\{S\}$ generated by $S \cap \mathit{CFK}^{\infty}(K)$.  Suppose $S$ has the property that every point in $\mathbb{Z} \times \mathbb{Z}$ that's either to the left or below some point in $S$ is already an element of $S$, in other words $S$ is closed under the operations of looking down and to the left. Then $C\{S\}$, together with the differential induced by $\partial^{\infty}$, gives us a $\mathbb{Z}_2$-chain complex. When $S_1$ and $S_2$ are two subsets of $\mathbb{Z} \times \mathbb{Z}$ with the above property, and $S_1 \supseteq S_2$, we can form the quotient chain complex $C\{S_1\} / C\{S_2\}$. 

We define $\tau(K)$ to be the minimum Alexander filtration level $s$ so that the inclusion map
\[
\textrm{incl}: C\{I \leq 0, A \leq s\} / C\{I < 0, A \leq s\} \hookrightarrow C\{I \leq 0\} / C\{I < 0\} \simeq \widehat{\mathit{CF}}(S^3)
\]
of $\mathbb{Z}_2$-chain complexes induces a non-trivial map on homology.

The invariants $\nu(K)$ and $\nu'(K)$ come from studying more complicated regions of the $\mathit{CFK}^{\infty}(K)$ graph. $\forall s \in \mathbb{Z}$, let $A_s$ be the $\mathbb{Z}_2$-vector space
\[
C\{\textrm{max}(I, A-s) \leq 0 \} / C\{\textrm{max}(I, A-s) <0 \}
\]
\noindent and let $A_s'$ be the $\mathbb{Z}_2$-vector space
\[
C\{\textrm{min}(I, A-s) \leq 0\} / C\{\textrm{min}(I, A-s) < 0\}.
\]
\noindent By equipping $A_s$ and $A_s'$ with the differentials induced by $\partial^{\infty}$, we can think of $A_s$ and $A_s'$ as $\mathbb{Z}_2$-chain complexes. Like we did for $\tau(K)$, we have chain maps 
\[
\nu_s: A_s \rightarrow \widehat{\mathit{CF}}(S^3) 
\]
and
\[
\nu_s': \widehat{\mathit{CF}}(S^3) \rightarrow A_s'
\]
\noindent given as follows: $\nu_s$ is the composition
\[
A_s \xrightarrow{\textrm{quot}} C\{I \leq 0, A \leq s\} / C\{I < 0, A \leq s\} \xhookrightarrow{\textrm{incl}} C\{I \leq 0\} / C\{I < 0\} \simeq \widehat{\mathit{CF}}(S^3)
\]
\noindent and $\nu_s'$ is the composition
\[
\widehat{\mathit{CF}}(S^3) \simeq C\{I \leq 0\} / C\{I < 0\} \xrightarrow{\textrm{quot}} C\{I \leq 0\} / C\{(I <0) \cup (I =0, A < s)\} \xhookrightarrow{\textrm{incl}} A_s'.  
\]
We define the invariant $\nu(K)$ to be the minimum Alexander filtration level $s$ so that the chain map $\nu_s$ induces a nontrivial map on homology, and the invariant $\nu '(K)$ to be the maximum Alexander filtration level $s$ so that the chain map $\nu_s'$ induces a nontrivial map on homology. Then the invariant $\varepsilon(K)$ is the integer $2\tau(K) - \nu(K) - \nu '(K)$. That $\varepsilon(K) \in \{-1, 0, 1\}$ is due to Hom in \cite[Lemmas 3.2 and 3.3]{Hom}. 

\section{$\widehat{\mathit{HFO}}(Y^{\textrm{orb}})$: Definition, Theorem  \ref{definvariant}, and Examples}\label{Sec: Orbifold Invt}

\subsection{Definition of $\widehat{\mathit{HFO}}(Y^{\textrm{orb}})$}

Let $Y^{\textrm{orb}}$ be a compact, connected, oriented 3-orbifold with singular set a knot $K$ of order $n$. Fix a neighborhood $N$ of $K$ modeled on $(D^2 \times S^1) / \mathbb{Z}_n$ and an orientation-preserving homeomorphism $\phi_N: (D^2 \times S^1) / \mathbb{Z}_n  \rightarrow N$. What will be important for us is the induced orientation-preserving parameterization of the boundary:
\[
\phi_{\partial N}: \partial \big( (D^2 \times S^1) / \mathbb{Z}_n \big) \rightarrow \partial N.
\]
There's a natural orientation-reversing identification of the oriented torus $F$ associated to the pointed matched circle $Z$ from Figure \ref{fig:PointedMatchedCircle} with $\partial \big( (D^2 \times S^1) / \mathbb{Z}_n \big)$, taking $\alpha_1^a$  to the longitude $\{ \overline{1} \} \times S^1$ and $\alpha_2^a$ to the meridian $\partial D^2 / \mathbb{Z}_n \times \{1\}$. This allows us to view $\phi_{\partial N}$ as an orientation-reversing parameterization of $\partial N$ by $F$.

If we remove (the interior of) the singular neighborhood $N$, we're left with an honest 3-manifold $E$ with torus boundary. Using the orientation-reversing parameterization $\phi_{\partial N}$ of $\partial N$, we can define the following orientation-preserving parameterization $\phi_{\partial E}$ of $\partial E$:
\[
\phi_{\partial E}: = id \circ \phi_{\partial N} : F \rightarrow \partial E.
\]
Then $E$, together with $\phi_{\partial E}$, forms a type A bordered 3-manifold. To $(E, \phi_{\partial E})$ we associate the type A structure $\widehat{\mathit{CFA}}(E, \phi_{\partial E})$ coming from bordered Floer theory.

Generalizing the type D structure $\widehat{\mathit{CFD}}(D^2 \times S^1, \psi)$ in Figure \ref{fig:CFD_Solid_Torus}, we associate to the singular piece $N$ the type D structure $D_N$ in Figure \ref{fig:CFD_Orbifold_Solid_Torus}.

\begin{figure}[h]
\centering{
\resizebox{60mm}{!}{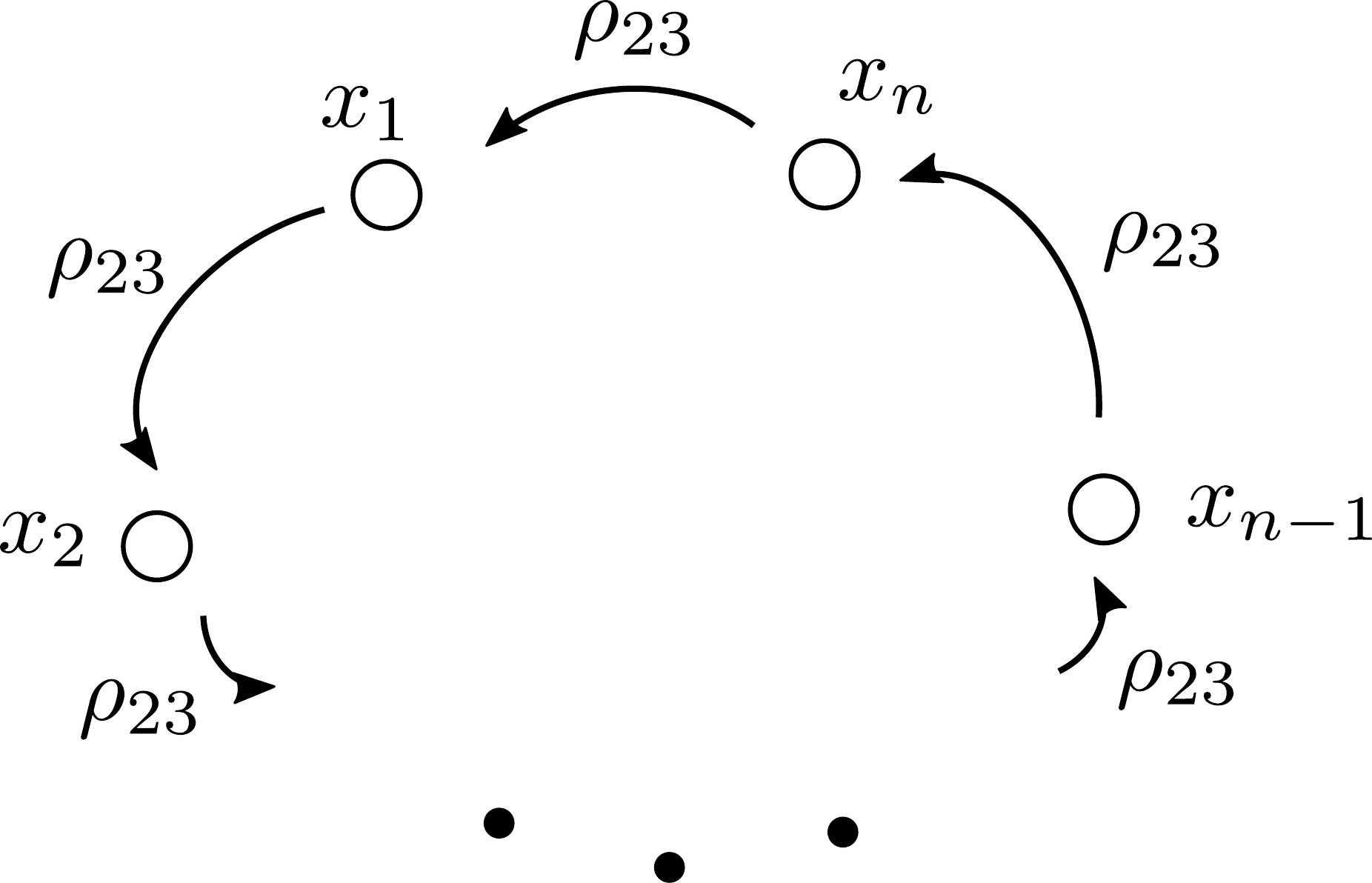}
\caption{The type D structure $D_N$}
\label{fig:CFD_Orbifold_Solid_Torus}
}
\end{figure}

Similar to $\widehat{\mathit{CFD}}(D^2 \times S^1, \psi)$, $D_N$ arises naturally from an ``orbifold bordered Heegaard diagram" for $(N, \phi_{\partial N})$; see Figure \ref{fig:OrbifoldBHD}. Here we're starting with a $\mathbb{Z}_n$-equivariant torus that has been punctured once, together with two properly embedded arcs $\alpha_1^a$ and $\alpha_2^a$. When we fill in the puncture, we recover $\partial N$. Like before, $\beta$ represents a meridian of an honest handlebody, but unlike before, $\beta$ sits immersed in the punctured $\mathbb{Z}_n$-equivariant torus, wrapping $n$ times around $\alpha_2^a$ because $\beta$ represents one full meridian, while $\alpha_2^a$ represents a meridian of the $\mathbb{Z}_n$-equivariant solid torus $N$, i.e. an $n$th of a full meridian. The generators $x_i$ of the type D structure $D_N$ correspond to where the $\beta$ curve intersects the $\alpha^a_1$ arc. The differential corresponds to counting domains with corners only at the generators. For an example of a domain that doesn't contribute to the differential, see Figure \ref{fig:Bad_Orbifold_Domain}.

\begin{figure}[h]
    \centering
    \begin{subfigure}[t]{0.5\textwidth}
        \centering
        \includegraphics[height=1.5in]{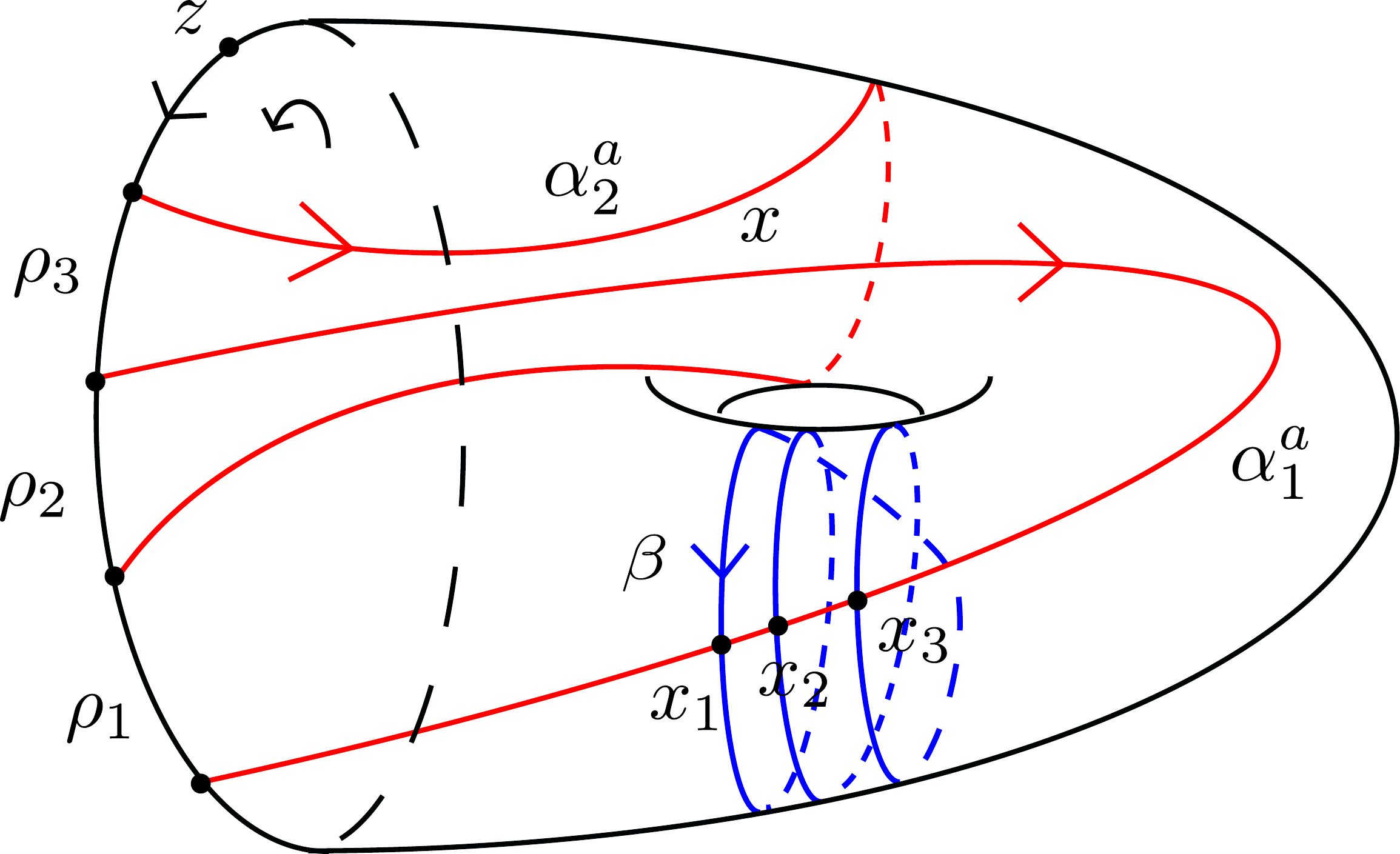}
        \caption{}
    \end{subfigure}%
    ~ 
    \begin{subfigure}[t]{0.5\textwidth}
        \centering
        \includegraphics[height=1.5in]{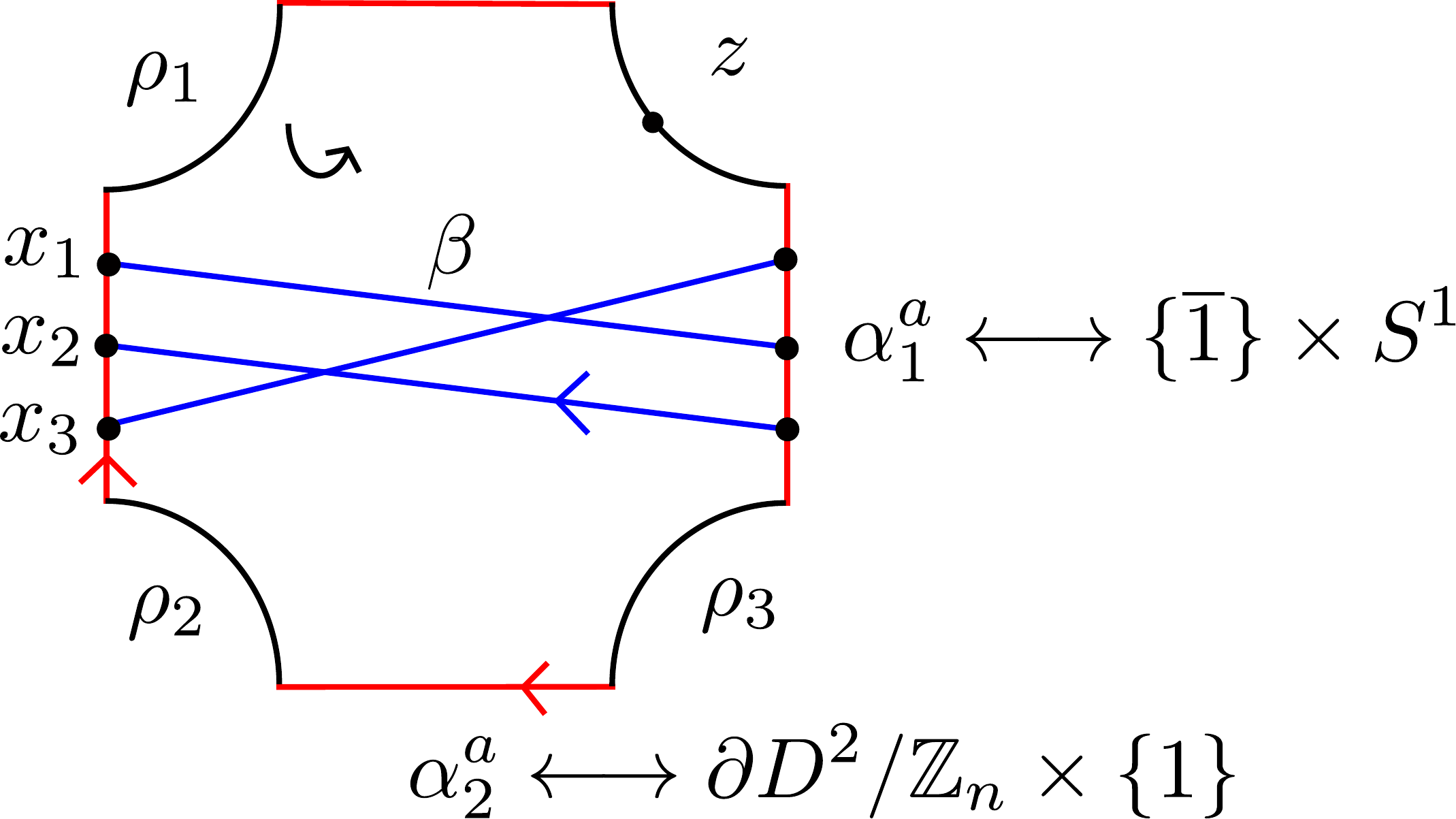}
        \caption{}
    \end{subfigure}
    \caption{Two ways to describe a genus 1 orbifold bordered Heegaard diagram for $(N, \phi_{\partial N})$ that yields $D_N$ in the case when $n=3$. The other values of $n$ are similar. Compare with Figure \ref{fig:BHD}.} 
    \label{fig:OrbifoldBHD}
\end{figure}

\begin{figure}[h]
    \centering
    \includegraphics[height=1.5in]{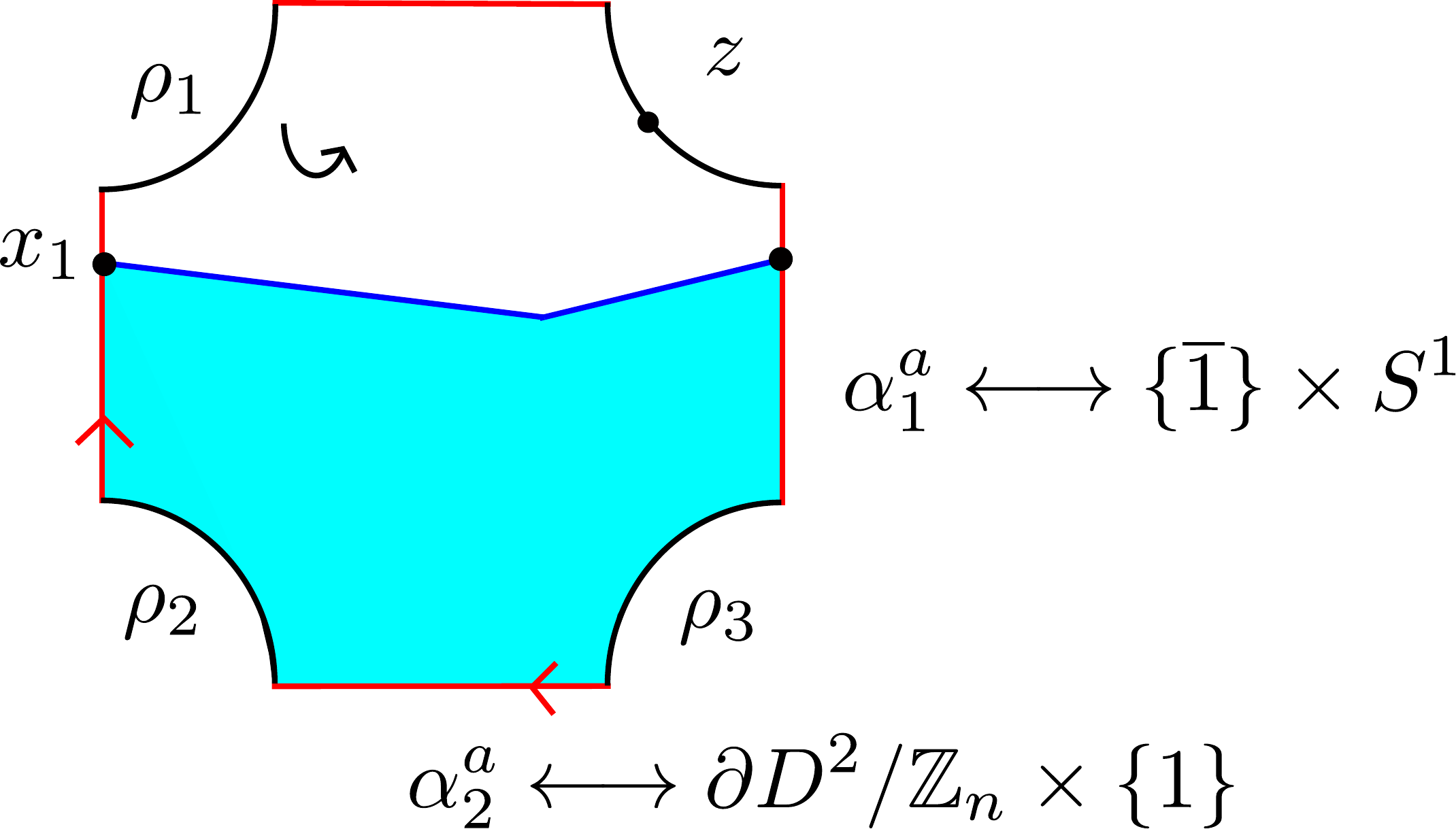}
            \caption{A domain that doesn't get counted towards the type D structure $D_N$ in the case when $n=3$}
\label{fig:Bad_Orbifold_Domain}
\end{figure}

\begin{Remark}
The type D structure $D_N$ isn't bounded, but by performing a ``finger move" on one of the edges we can pass to a homotopy equivalent type D structure that is bounded. See Figure \ref{fig:bounded_version} for an example.
\end{Remark}

\begin{figure}[h]
    \centering
    \begin{subfigure}[t]{0.4\textwidth}
        \centering
        \includegraphics[height=1.5in]{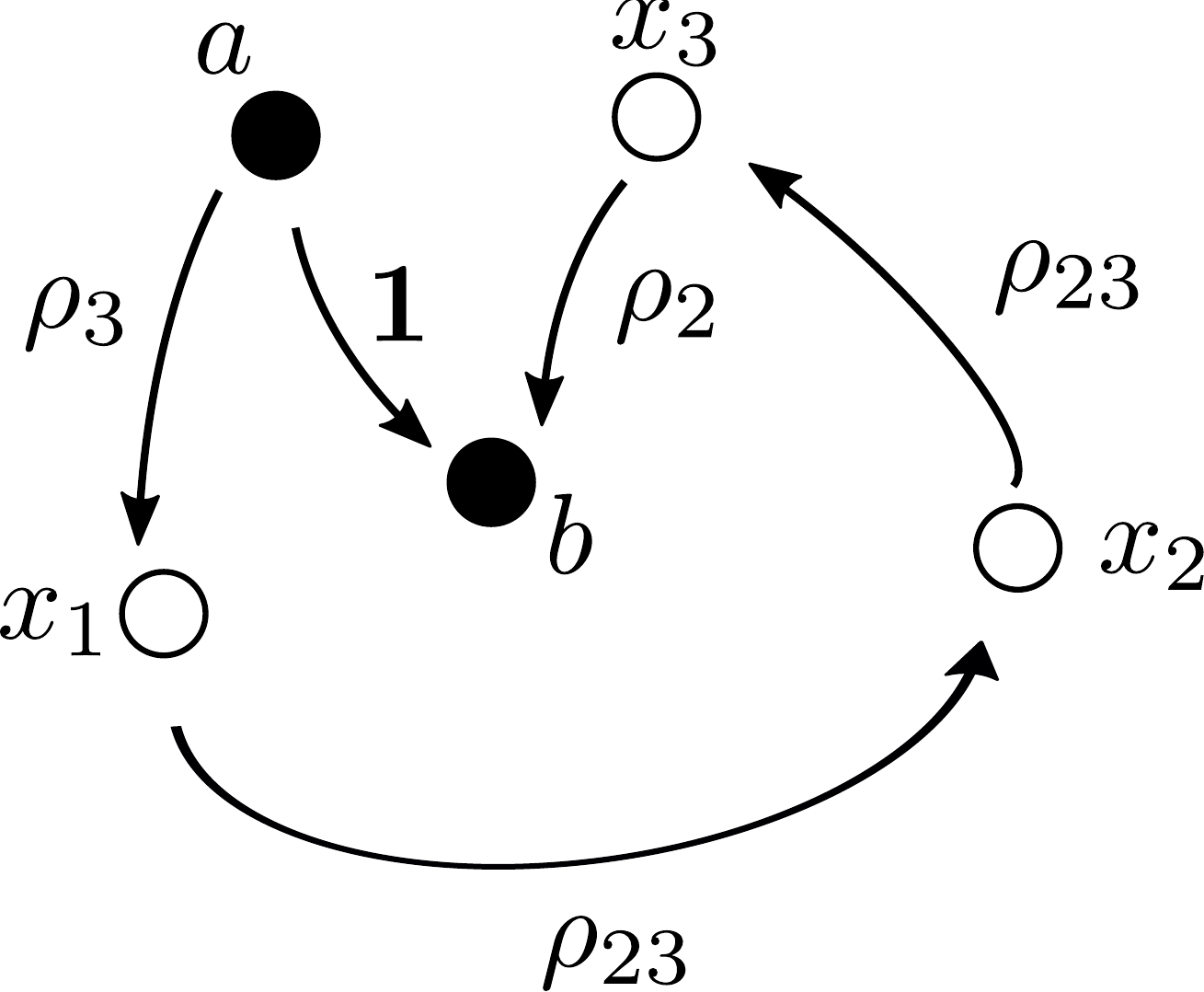}
        \caption{}
    \end{subfigure}%
    ~ 
    \begin{subfigure}[t]{0.4\textwidth}
        \centering
        \includegraphics[height=1.7in]{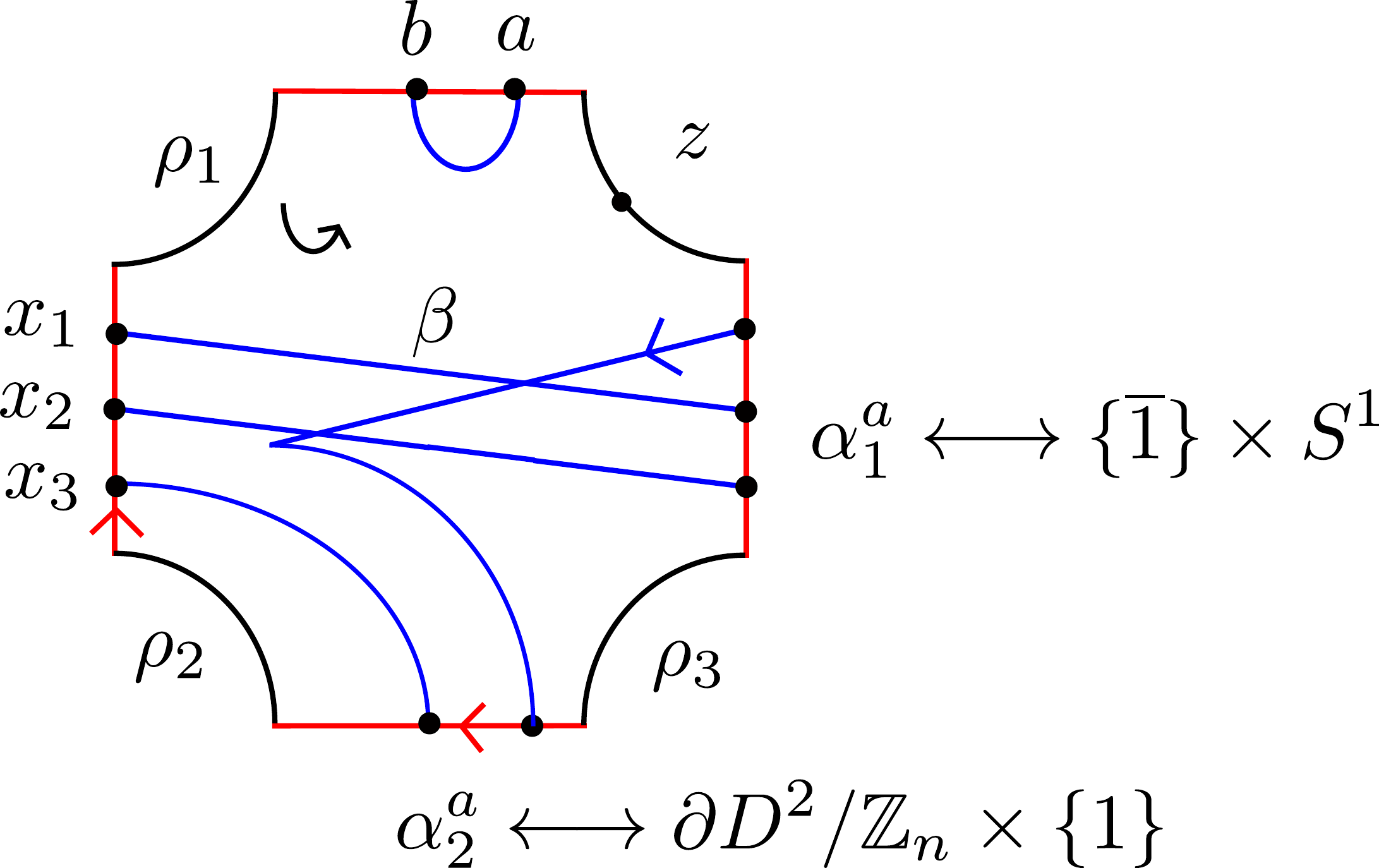}
        \caption{}
    \end{subfigure}
\caption{A bounded type D structure that is homotopy equivalent to $D_N$ and an orbifold bordered Heegaard diagram for $(N, \phi_{\partial N})$ that gives rise to it in the case when $n=3$}
\label{fig:bounded_version}
\end{figure}

\begin{Def}
Let $\widehat{\mathit{CFO}}(Y^{\textrm{orb}})$ be the box tensor product $\widehat{\mathit{CFA}}(E, \phi_{\partial E}) \boxtimes D_N$. We define $\widehat{\mathit{HFO}}(Y^{\textrm{orb}})$ to be the homology of $\widehat{\mathit{CFO}}(Y^{\textrm{orb}})$.
\end{Def}

\begin{Remark}
$\widehat{\mathit{CFA}}(E, \phi_{\partial E}) \boxtimes D_N$ only makes sense for bounded $\widehat{\mathit{CFA}}(E, \phi_{\partial E})$. When $\widehat{\mathit{CFA}}(E, \phi_{\partial E})$ isn't bounded, we consider $\widehat{\mathit{CFA}}(E, \phi_{\partial E}) \boxtimes D_N'$ instead, where $D_N'$ is any type D structure obtained from $D_N$ by a finger move as described above. Note that $\widehat{\mathit{CFA}}(E, \phi_{\partial E}) \boxtimes D_N$ and $\widehat{\mathit{CFA}}(E, \phi_{\partial E}) \boxtimes D_N'$ are homotopy equivalent, so we haven't lost anything by passing to $\widehat{\mathit{CFA}}(E, \phi_{\partial E}) \boxtimes D_N'$.
\end{Remark}
  
\subsection{Proof of Theorem \ref{definvariant}} Here we prove that $\widehat{\mathit{HFO}}(Y^{\textrm{orb}})$ is a well-defined invariant of $Y^{\textrm{orb}}$ that generalizes $\widehat{\mathit{HF}}$ for 3-manifolds.

\begin{proof}[Proof that $\widehat{\mathit{HFO}}(Y^{\emph{orb}})$ is well-defined] We need to show that $\widehat{\mathit{HFO}}(Y^{\textrm{orb}})$ is independent of the equivariant neighborhood $N$ and the orientation-preserving parameterization $\phi_N: (D^2 \times S^1) / \mathbb{Z}_n  \rightarrow N$. First we argue that for a fixed neighborhood $N$, $\widehat{\mathit{HFO}}(Y^{\textrm{orb}})$ is independent of the parameterization $\phi_N$. Let $\phi_N ^1$ and $\phi_N ^2$ be two orientation-preserving parameterizations of $N$ by $(D^2 \times S^1) / \mathbb{Z}_n$. From $\phi_N ^1$ and $\phi_N ^2$, we get two type A bordered 3-manifolds $(E, \phi_{\partial E}^1)$ and $(E, \phi_{\partial E}^2)$. It suffices to show that the resulting $\mathbb{Z}_2$-chain complexes $\widehat{\mathit{CFA}}(E, \phi_{\partial E} ^1) \boxtimes D_{N}$ and $\widehat{\mathit{CFA}}(E, \phi_{\partial E} ^2) \boxtimes D_{N}$ are chain homotopy equivalent. Because $\widehat{\mathit{CFA}}(E, \phi_{\partial E} ^1)$ and $\widehat{\mathit{CFA}}(E, \phi_{\partial E} ^2)$ may not be bounded, we'll need to replace $D_N$ with a bounded type D structure $D_N'$ that's homotopy equivalent to $D_N$; we'll use the one in Figure \ref{fig:Bounded_DN}.

\begin{figure}[h]
\centering{
\resizebox{70mm}{!}{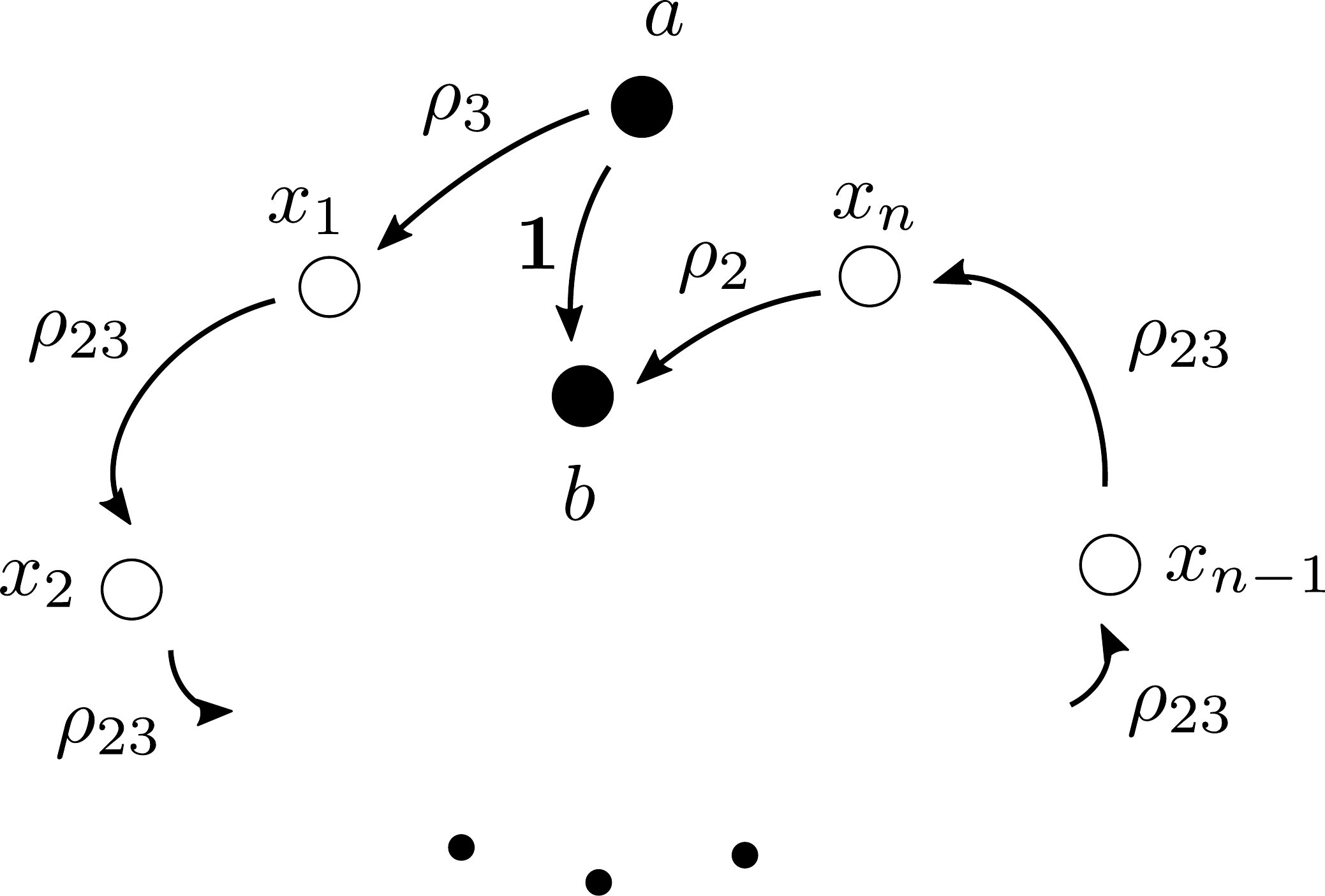}
\caption{A bounded type D structure $D_N'$ homotopy equivalent to $D_N$}
\label{fig:Bounded_DN}
}
\end{figure}

We claim that the $\mathbb{Z}_2$-chain complexes $\widehat{\mathit{CFA}}(E, \phi_{\partial E} ^1) \boxtimes D_{N}'$ and $\widehat{\mathit{CFA}}(E, \phi_{\partial E} ^2) \boxtimes D_{N}'$ are chain homotopy equivalent. We prove this as follows. Let $\psi: F \rightarrow F$ be the composition $ \big( (\phi_{\partial E} ^1)^{-1} \circ \phi_{\partial E} ^2 \big)^{-1}$. Note that $\phi_{\partial E} ^2 = \phi_{\partial E} ^1 \circ \psi ^{-1}$. By Equation \ref{changeofparameqn},
\[
\widehat{\mathit{CFA}}(E, \phi_{\partial E} ^2) \simeq \widehat{\mathit{CFA}}(E, \phi_{\partial E} ^1)  \boxtimes \widehat{\mathit{CFDA}}(\psi).
\]
Then
\begin{equation*}
\begin{split}
\widehat{\mathit{CFA}}(E, \phi_{\partial E} ^2) \boxtimes D_{N}' & \simeq \Big( \widehat{\mathit{CFA}}(E, \phi_{\partial E} ^1)  \boxtimes \widehat{\mathit{CFDA}}(\psi) \Big) \boxtimes D_N' \\
& \simeq \widehat{\mathit{CFA}}(E, \phi_{\partial E} ^1)  \boxtimes \Big( \widehat{\mathit{CFDA}}(\psi) \boxtimes D_N' \Big),
\end{split}
\end{equation*}
so if we can show
\begin{equation*}
\widehat{\mathit{CFDA}}(\psi) \boxtimes D_N' \simeq D_N ',
\end{equation*}

\noindent then we have the claim.

\begin{Lem}
Let $\tau_{\alpha ^a_2} : F \rightarrow F$ be the Dehn twist about the curve $\alpha ^a_2$. Then $\psi$ is isotopic to a power of $\tau_{\alpha ^a_2}$.
\end{Lem}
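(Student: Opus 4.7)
The plan is to view $\psi^{-1}$ as the boundary restriction of an orientation-preserving orbifold self-homeomorphism of $(D^2 \times S^1)/\mathbb{Z}_n$, and then to use the fact that such a boundary restriction must act on homology like a power of the meridional Dehn twist.

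First, I would unwind the definitions. Since $\phi_{\partial E}^i$ factors as the composition of $\phi_{\partial N}^i: F \to \partial N$ with the orientation-reversing set-theoretic identification $\mathrm{id}: \partial N \to \partial E$, a direct computation gives
\[
\psi^{-1} = (\phi_{\partial E}^1)^{-1} \circ \phi_{\partial E}^2 = (\phi_{\partial N}^1)^{-1} \circ \phi_{\partial N}^2.
\]
Under the orientation-reversing identification $F \cong \partial ((D^2 \times S^1)/\mathbb{Z}_n)$, this is the boundary restriction of the orientation-preserving orbifold self-homeomorphism $h := (\phi_N^1)^{-1} \circ \phi_N^2$. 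Because $h$ preserves the singular core $\{0\} \times S^1$ setwise, $h|_\partial$ preserves the meridian class $[\alpha_2^a]$ up to sign, so the matrix of $\psi^{-1}$ on $H_1(F;\mathbb{Z})$ in the basis $([\alpha_2^a], [\alpha_1^a])$ takes the form $\begin{pmatrix}\varepsilon & k \\ 0 & \varepsilon\end{pmatrix}$ for some $\varepsilon \in \{\pm 1\}$ and $k \in \mathbb{Z}$ (the lower-left entry vanishes because $[\alpha_2^a]$ is preserved up to sign, the diagonal entries agree because $\det = +1$).

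To conclude, I would argue that $\varepsilon = +1$, after which the matrix matches the homological action of $\tau_{\alpha_2^a}^k$, and the isotopy class of $\psi^{-1}$ is then determined by homology (since the orientation-preserving mapping class group of $T^2$ is isomorphic to $SL_2(\mathbb{Z})$ via its action on $H_1$). The sign $\varepsilon = +1$ would come from the local $\mathbb{Z}_n$-equivariance at the singular core: an orientation-preserving orbifold homeomorphism $h$ must lift equivariantly in orbifold charts near the core, and the centralizer of the $\mathbb{Z}_n$-rotation action in $SO(3)$ forces the differential of $h$ at any singular point to be a rotation of $\mathbb{R}^3$ preserving the core axis as an oriented line, pinning down the sign on the oriented meridian class. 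The main obstacle is precisely this last step, since the orientation-preserving mapping class group of the underlying solid torus $D^2 \times S^1$ (ignoring the orbifold structure) strictly contains the cyclic group generated by the meridional Dehn twist---for instance, it contains the hyperelliptic-style involution $(z,w) \mapsto (\bar z, \bar w)$, which acts as $-I$ on boundary homology---so using only the underlying 3-manifold orientation is insufficient and the orbifold structure is essential.
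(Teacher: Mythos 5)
Your route mirrors the paper's own: unwind $\psi^{-1}$ to $(\phi_{\partial N}^1)^{-1}\circ\phi_{\partial N}^2$, note that it extends over the orbifold solid torus and hence preserves the meridian class up to sign, and then read off the matrix on $H_1(F;\mathbb{Z})$. You are also right that the sign $\varepsilon$ is the crux and that the underlying-manifold orientation alone does not settle it --- indeed the paper's proof passes from ``sends $m$ to a meridian'' directly to ``is a power of $D_m$'' without addressing $-I\in SL_2(\mathbb{Z})$, which also sends the meridian to a meridian. The gap is in your attempt to force $\varepsilon=+1$ from local equivariance. An orbifold self-homeomorphism only needs to lift near the core to a map that is equivariant with respect to \emph{some} automorphism of $\mathbb{Z}_n$, i.e.\ one that \emph{normalizes} the local $\mathbb{Z}_n$-action rather than centralizes it. For the standard $\mathbb{Z}_n\subset SO(2)\subset SO(3)$, the normalizer in $SO(3)$ is $O(2)$: it contains the $\pi$-rotations about axes perpendicular to the core, which reverse the core axis and, restricting to reflections of the meridional plane, reverse the oriented meridian class. (For $n=2$ even the centralizer is already $O(2)$.) Concretely, $(z,w)\mapsto(\bar z,\bar w)$ normalizes the $\mathbb{Z}_n$-rotation action on $D^2\times S^1$ (conjugating it to its inverse), descends to an orientation-preserving self-homeomorphism of $(D^2\times S^1)/\mathbb{Z}_n$ that fixes the singular core setwise, and acts as $-I$ on boundary homology. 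So the orbifold structure alone cannot rule out $\varepsilon=-1$, and the Lemma as stated is not forced by the hypotheses you (or the paper) invoke.

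What would close the gap is an additional constraint on the parameterizations, e.g.\ that the $\phi_N^i$ respect a fixed orientation of the singular knot $K$. Then $h=(\phi_N^1)^{-1}\circ\phi_N^2$ induces the identity on $H_1\bigl((D^2\times S^1)/\mathbb{Z}_n\bigr)\cong\mathbb{Z}$, so in the basis $([\alpha_2^a],[\alpha_1^a])$ the $(\lambda,\lambda)$-entry must be $+1$; combined with $\det=+1$ this gives $\varepsilon=+1$, and your $SL_2(\mathbb{Z})$ step then finishes the argument as you intended.
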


\begin{proof}
It's enough to show that the composition $(\phi_{\partial N} ^1)^{-1} \circ \phi_{\partial N} ^2 : \partial \big((D^2 \times S^1) / \mathbb{Z}_n\big) \rightarrow \partial \big((D^2 \times S^1) / \mathbb{Z}_n\big) $ is isotopic to the Dehn twist about the meridian $\partial D^2 / \mathbb{Z}_n \times \{1\} $, which we denote by $m$ for convenience. By construction, $(\phi_{\partial N} ^1)^{-1} \circ \phi_{\partial N} ^2$ extends to a homeomorphism of $(D^2 \times S^1) / \mathbb{Z}_n$. Then $(\phi_{\partial N} ^1)^{-1} \circ \phi_{\partial N} ^2$ has to send $m$ to a meridian of $(D^2 \times S^1) / \mathbb{Z}_n$, which means that $(\phi_{\partial N} ^1)^{-1} \circ \phi_{\partial N} ^2$ is isotopic to a power of $D_m$.
\end{proof}

\noindent We can assume $\psi \simeq (\tau_{\alpha ^a_2})^n$ for some $n \in \mathbb{N}$ because there's a similar argument for $\psi \simeq  (\tau_{\alpha ^a_2}^{-1})^{n}$. From \cite[Theorem 5]{LOT2},
\[
\widehat{\mathit{CFDA}}(\psi) \simeq \underbrace{\widehat{\mathit{CFDA}}(\tau_{\alpha ^a_2}) \boxtimes \cdots \boxtimes \widehat{\mathit{CFDA}}(\tau_{\alpha ^a_2})}_\text{n times},
\] 
so it suffices to show
\begin{equation}\label{NTS2}
\widehat{\mathit{CFDA}}(\tau_{\alpha ^a_2}) \boxtimes D_N ' \simeq D_N '.
\end{equation}

From \cite[Proposition 10.6]{LOT2}, $\widehat{\mathit{CFDA}}(\tau_{\alpha ^a_2})$ \big(which Lipshitz, Ozsv\'{a}th, and D. Thurston call $\widehat{\mathit{CFDA}}(\tau_{m},0)$\big) has generators $\textbf{p}$, $\textbf{q}$, and $\textbf{r}$, with the non-zero $(\mathcal{I}, \mathcal{I})$-actions given by
\begin{align*}
\iota_1 \cdot \textbf{p} \cdot \iota_1 &= \textbf{p} & \iota_2 \cdot \textbf{q} \cdot \iota_2 &= \textbf{q} & \iota_2 \cdot \textbf{r} \cdot \iota_1 &= \textbf{r},
\end{align*}

\noindent and the non-trivial differentials given by
\begin{align*}
\delta^2_1(\textbf{p}, \rho_1) &= \rho_1 \otimes \textbf{q} & \delta^2_1(\textbf{p}, \rho_{12}) &= \rho_{123} \otimes \textbf{r}\\
\delta^2_1(\textbf{p}, \rho_{123}) &= \rho_{123} \otimes \textbf{q} & \delta^3_1(\textbf{p}, \rho_3, \rho_2) &= \rho_3 \otimes \textbf{r}\\
\delta^3_1(\textbf{p}, \rho_3, \rho_{23}) &= \rho_3 \otimes \textbf{q} & \delta^2_1(\textbf{q}, \rho_2) &= \rho_{23} \otimes \textbf{r}\\
\delta^2_1(\textbf{q}, \rho_{23}) &= \rho_{23} \otimes \textbf{q} & \delta^2_1(\textbf{r}) &= \rho_2 \otimes \textbf{p}\\
\delta^2_1(\textbf{r}, \rho_3) &= \textbf{1} \otimes \textbf{q}. & 
\end{align*}

\noindent By direct computation, we get that the type D structure $\widehat{\mathit{CFDA}}(\tau_{\alpha ^a_2}) \boxtimes D_N '$ is given by the decorated, directed graph in Figure \ref{fig:10}. If we cancel the edges $\stackrel{\bm{p} \otimes a}{\bullet} \xrightarrow{\textrm{$\bm{1}$}} \stackrel{\bm{p} \otimes b}{\bullet}$ and $\stackrel{\bm{r} \otimes a}{\circ} \xrightarrow{\textrm{$\bm{1}$}} \stackrel{\bm{r} \otimes b}{\circ}$ as prescribed by the well-known ``edge reduction" algorithm \cite{Levine}, Figure \ref{fig:10} reduces to Figure \ref{fig:CFD_Orbifold_Solid_Torus}. This shows that $\widehat{\mathit{CFDA}}(\tau_{\alpha ^a_2}) \boxtimes D_N '$ and $D_N'$ are homotopy equivalent.

\begin{figure}[h]
\centering{
\resizebox{90mm}{!}{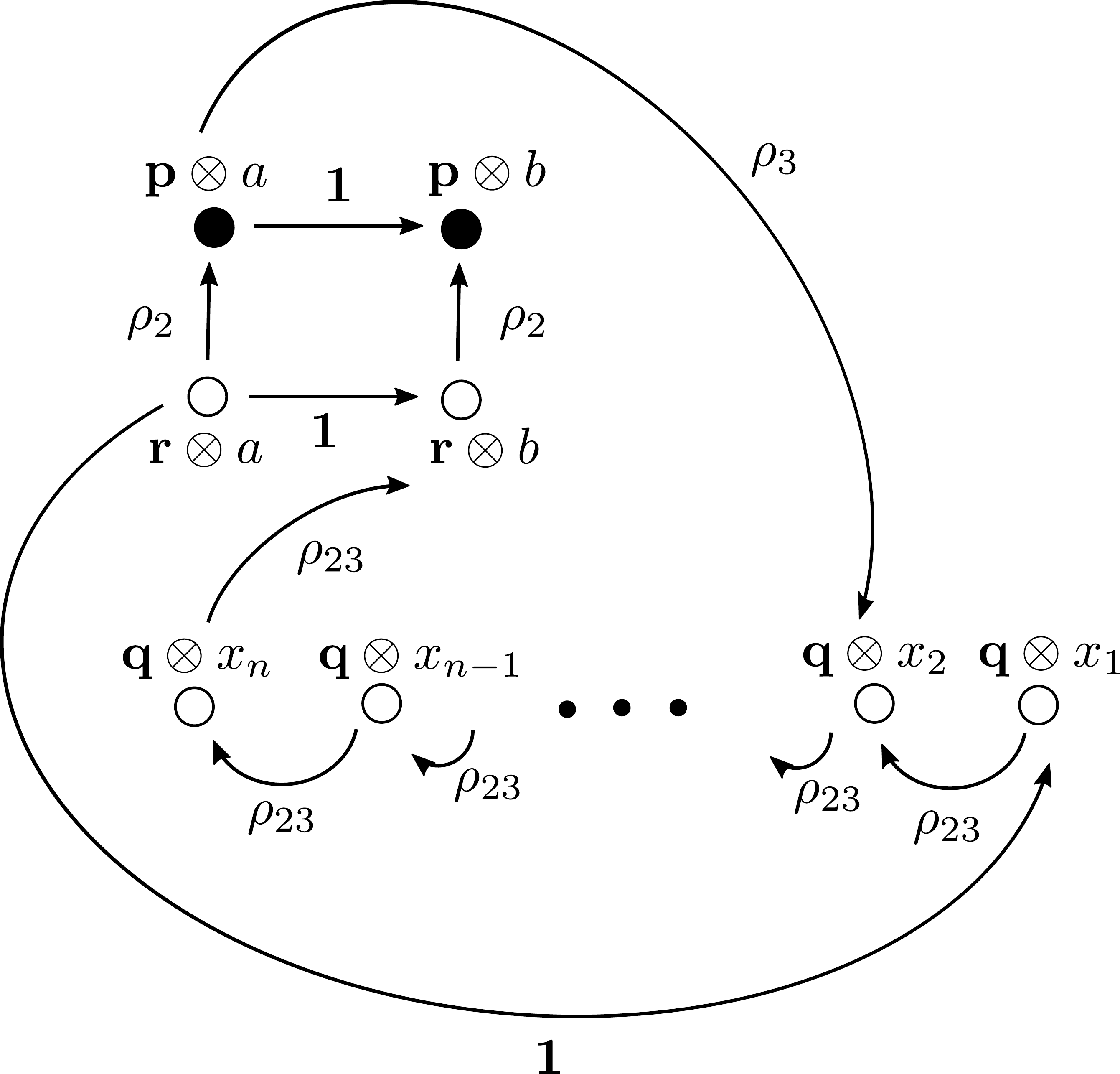}
\caption{The type D structure $\widehat{\mathit{CFDA}}(\tau_{\alpha ^a_2}) \boxtimes D_N '$}
\label{fig:10}
}
\end{figure}

Now we check that $\widehat{\mathit{HFO}}(Y^{\textrm{orb}})$ is independent of the singular tubular neighborhood $N$. Let $N_1$ and $N_2$ be two singular tubular neighborhoods of $K$. Since $D_{N_1} = D_{N_2}$, it's enough to show that the type A structures $\widehat{\mathit{CFA}}(E_1, (\phi_1)_{\partial E_1})$ and $\widehat{\mathit{CFA}}(E_2, (\phi_2)_{\partial E_2})$ are homotopy equivalent, for some choice of $\phi_1: (D^2 \times S^1) / \mathbb{Z}_n  \rightarrow N_1$  and $\phi_2: (D^2 \times S^1) / \mathbb{Z}_n  \rightarrow N_2$. Let $\phi_1: (D^2 \times S^1) / \mathbb{Z}_n  \rightarrow N_1$ be any orientation-preserving parameterization of $ N_1$. Since $N_1$ and $N_2$ are tubular neighborhoods of the same $K$, $N_1$ and $N_2$ are ambiently isotopic, so pick an ambient isotopy $H_t$ of $|Y^{\textrm{orb}}|$ that takes $N_1$ to $N_2$. Then we can define $\phi_2$ to be the composition $H_1 |_{N_1} \circ \phi_1$. By construction, we have the commutative diagram in Figure \ref{fig:11}. This implies that the bordered 3-manifolds $(E_1, (\phi_1)_{\partial E_1})$ and $(E_2, (\phi_2)_{\partial E_2})$ are equivalent, which in turn implies that $\widehat{\mathit{CFA}}(E_1, (\phi_1)_{\partial E_1})$ and $\widehat{\mathit{CFA}}(E_2, (\phi_2)_{\partial E_2})$ are homotopy equivalent. This concludes the proof that $\widehat{\mathit{HFO}}(Y^{\textrm{orb}})$ is well-defined.

\begin{figure}[h]
\centering
\begin{tikzcd}[column sep={6em,between origins}, row sep=huge]
&&& F \arrow{dr}{\cong} \arrow[swap]{dl}{\cong} &&& \\
&& \partial \big( (D^2 \times S^1) / \mathbb{Z}_n \big) \arrow[swap]{dl}{\phi_1|_{\partial}} && \partial \big( (D^2 \times S^1) / \mathbb{Z}_n \big) \arrow{dr}{\phi_2|_{\partial}} &&\\
& \partial N_1 \arrow[swap]{dl}{id} &&&& \partial N_2 \arrow{dr}{id} &\\
\partial E_1 \arrow[swap]{rrrrrr}{H_1|_{\partial E_1}} &&&&&& \partial E_2
\end{tikzcd}
\caption{$(E_1, \phi_{\partial E_1} ^1)$ and $(E_2, \phi_{\partial E_2} ^1)$ are equivalent}
\label{fig:11}
\end{figure}
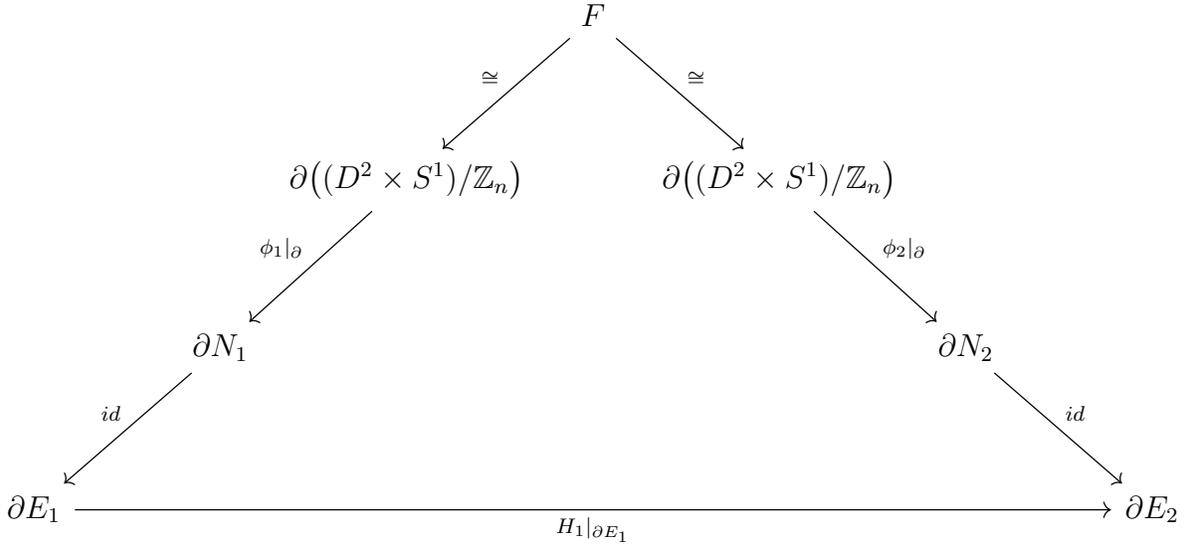
\end{proof}
 
\begin{proof}[Proof that $\widehat{\mathit{HFO}}(Y^{\emph{orb}})$ is an invariant of 3-orbifolds] Let $(Y_1, K_1, n)$ and $(Y_2, K_2, n)$ be homeomorphic (oriented) 3-orbifolds. We need to show $\widehat{\mathit{HFO}}(Y_1, K_1, n) \cong \widehat{\mathit{HFO}}(Y_2, K_2, n)$. We have an orientation-preserving homeomorphism $|f|: Y_1 \rightarrow Y_2$ between the underlying oriented 3-manifolds $Y_1$ and $Y_2$ taking a singular neighborhood $N_1$ of $K_1$ to a singular neighborhood $N_2$ of $K_2$. Let $E_i = Y_i - \textrm{int}(N_i)$. Then $|f| (E_1) = E_2$. Now pick any orientation-preserving parameterization $\phi_1 : (D^2 \times S^1) / \mathbb{Z}_n  \rightarrow N_1$ of $N_1$. As described above, we get an orientation-preserving parameterization $(\phi_1)_{\partial E_1}: F \rightarrow \partial E_1$ of $\partial E_1$. Define $(\phi_2)_{\partial E_2} : F  \rightarrow \partial E_2$ to be the composition $|f| \circ (\phi_1)_{\partial E_1}$. Similar to the argument above, the bordered 3-manifolds $(E_1, (\phi_1)_{\partial E_1})$ and $(E_2, (\phi_2)_{\partial E_2})$ are equivalent, which means that the associated type A structures $\widehat{\mathit{CFA}} (E_1, (\phi_1)_{\partial E_1})$ and $\widehat{\mathit{CFA}} (E_2, (\phi_2)_{\partial E_2})$ are homotopy equivalent. This implies $\widehat{\mathit{HFO}}(Y_1, K_1, n) \cong \widehat{\mathit{HFO}}(Y_2, K_2, n)$. 
\end{proof}

\begin{proof}[Proof that $\widehat{\mathit{HFO}}(Y^{\emph{orb}})$ generalizes $\widehat{\mathit{HF}}(Y)$]
When $n=1$, $N$ is modeled on $D^2 \times S^1$ and $D_N$ is the type D structure for $D^2 \times S^1$ with boundary parameterization given by $\alpha_1^a \mapsto \{1\} \times S^1$ and $\alpha_2^a \mapsto \partial D^2 \times \{1\}$. By \ref{pairing}, $ \widehat{\mathit{CF}} (Y^{\textrm{orb}}) \simeq\widehat{\mathit{CFA}} (E, \phi_{\partial E}) \boxtimes D_N = \widehat{\mathit{CFO}}(Y^{\textrm{orb}})$, which implies $ \widehat{\mathit{HF}}(Y^{\textrm{orb}}) \cong \widehat{\mathit{HFO}}(Y^{\textrm{orb}})$.
\end{proof}

\subsection{Examples}

In this subsection we calculate $\widehat{\mathit{HFO}}(Y^{\textrm{orb}})$ for some 3-orbifolds. For more examples see Section \ref{moreexs}.

\subsubsection{$(S^3, K, n)$, where $K$ is any knot in $S^3$} Given any choice of $N$ and $\phi: (D^2 \times S^1) / \mathbb{Z}_n  \rightarrow N$, we can represent $(E, \phi_{\partial E})$ by the bordered Heegaard diagram in Figure \ref{fig:S3a}. Then the associated type A structure $\widehat{\mathit{CFA}} (E, \phi_{\partial E})$ is given by the graph in Figure \ref{fig:S3b}. It's not hard to check that $\widehat{\mathit{CFA}} (E, \phi_{\partial E}) \boxtimes D_N'$ has trivial differential, which means $\widehat{\mathit{HFO}}(S^3, K, n) \cong \mathbb{Z}_2 \langle y \otimes x_1, \ldots, y \otimes x_n \rangle \cong \big(\mathbb{Z}_2 \big)^n$. Note that the rank of $\widehat{\mathit{HFO}}(S^3, K, n)$ is $n$ times the rank of $\widehat{\mathit{HF}}(S^3)$.

\begin{figure}[h]
    \centering
    \begin{subfigure}[t]{0.41\textwidth}
    \centering{
\resizebox{85mm}{!}{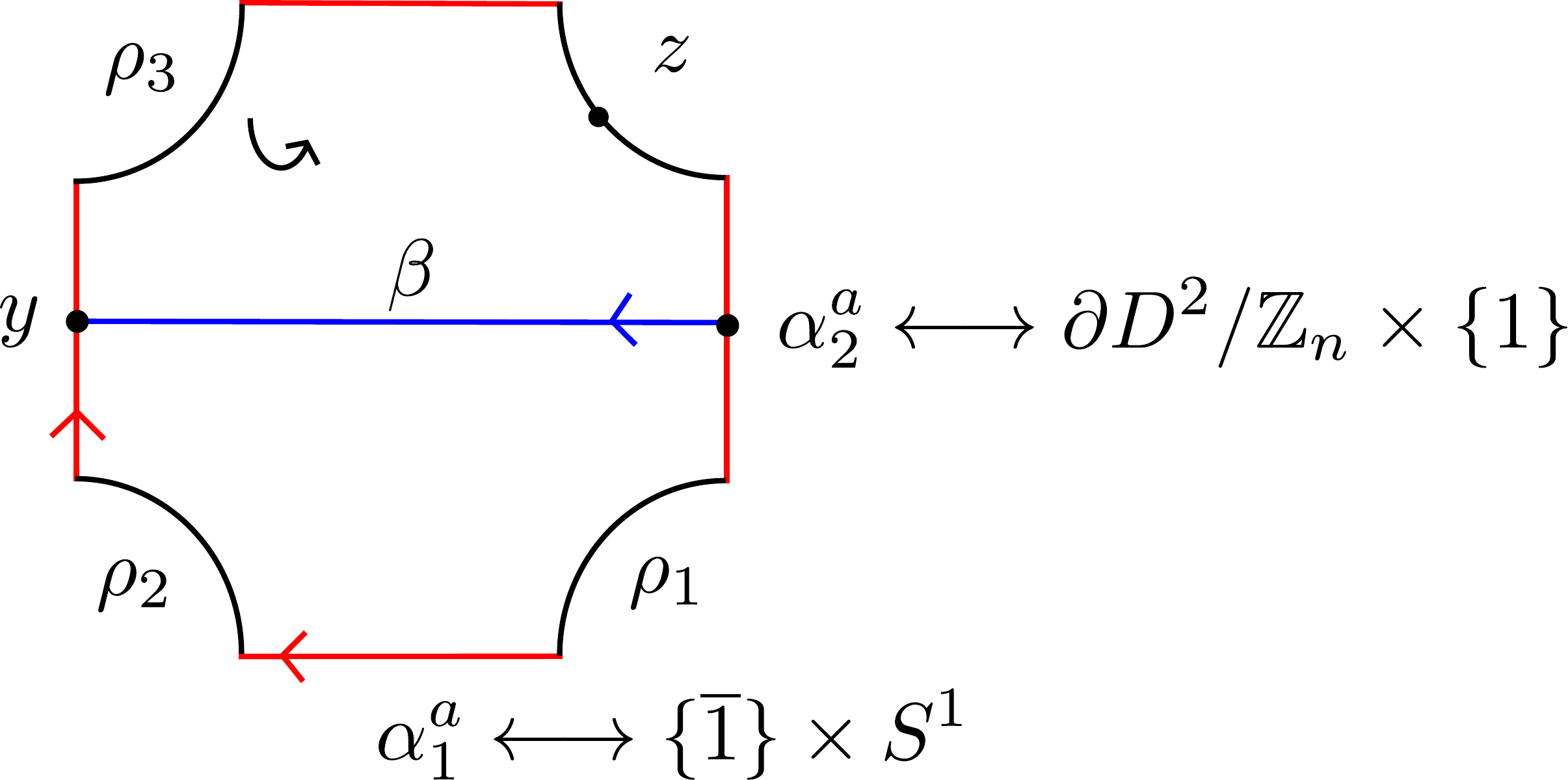}}
        \caption{}
        \label{fig:S3a}
    \end{subfigure}%
    ~ 
    \begin{subfigure}[t]{0.41\textwidth}
        \centering{
\resizebox{73mm}{!}{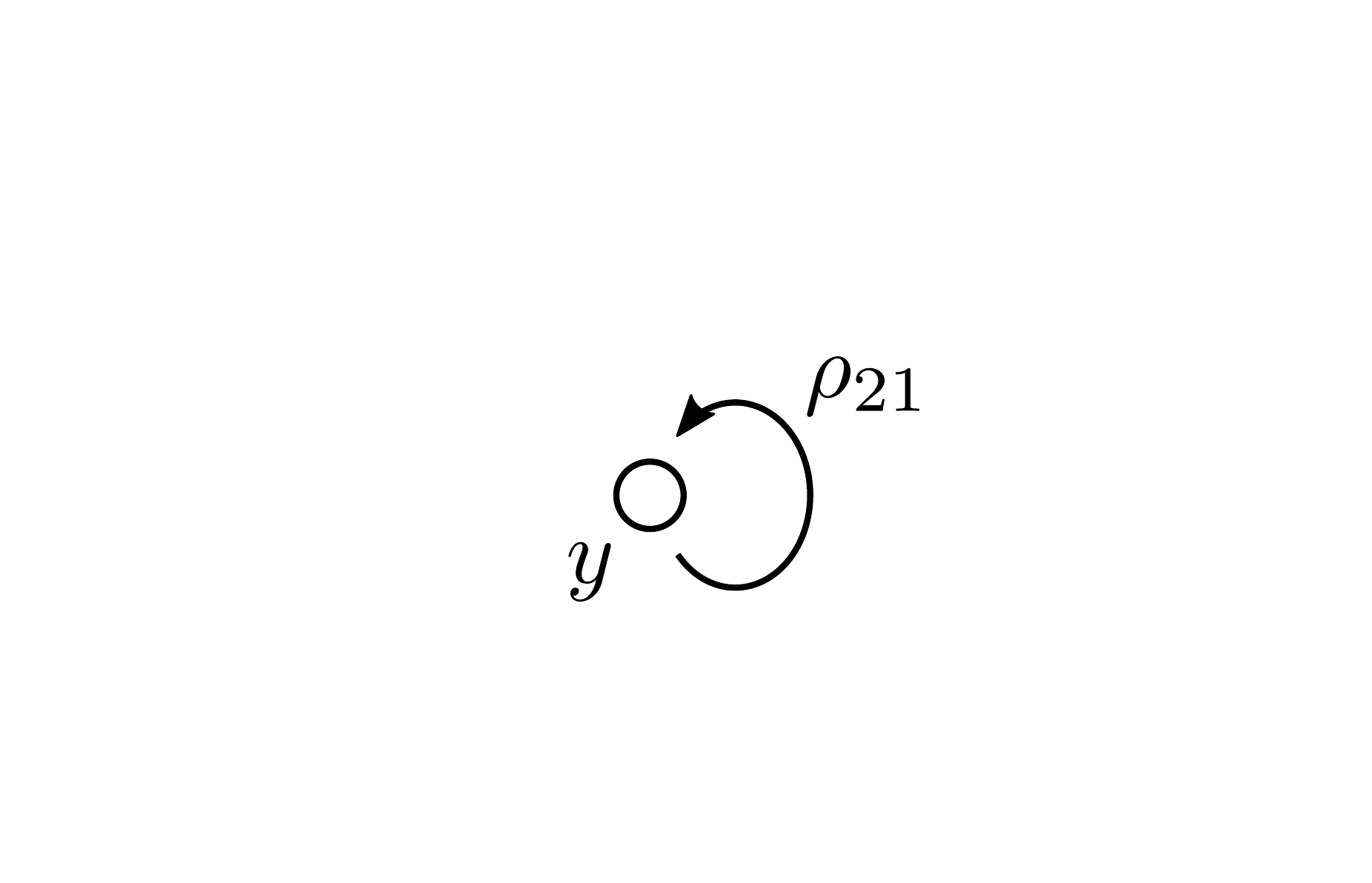}}
        \caption{}
        \label{fig:S3b}
    \end{subfigure}
\caption{On the left a bordered Heegaard diagram for $(E, \phi_{\partial E})$ when the 3-orbifold is $(S^3, K, n)$. On the right the corresponding type A structure $\widehat{\mathit{CFA}} (E, \phi_{\partial E})$.}
\end{figure}

\subsubsection{$(S^2 \times S^1, \{1\} \times S^1, n)$}\label{S2S1orbifoldex} In this example we take our bordered Heegaard diagram for $(E, \phi_{\partial E})$ to be Figure \ref{fig:S2xS1a}. The corresponding type A structure $\widehat{\mathit{CFA}} (E, \phi_{\partial E})$ is pictured in Figure \ref{fig:S2xS1b}. The differential in $\widehat{\mathit{CFA}} (E, \phi_{\partial E}) \boxtimes D_N'$ is again trivial, so $\widehat{\mathit{HFO}}(S^2 \times S^1, \{1\} \times S^1, n) \cong \mathbb{Z}_2 \langle y \otimes a, y \otimes b \rangle \cong \big(\mathbb{Z}_2 \big)^2$. Unlike the first example, the rank of $\widehat{\mathit{HFO}}(S^2 \times S^1, \{1\} \times S^1, n)$ equals the rank of $\widehat{\mathit{HF}}(S^2 \times S^1)$ for every $n$.

\begin{figure}[h]
    \centering
    \begin{subfigure}[t]{0.41\textwidth}
    \centering{
\resizebox{85mm}{!}{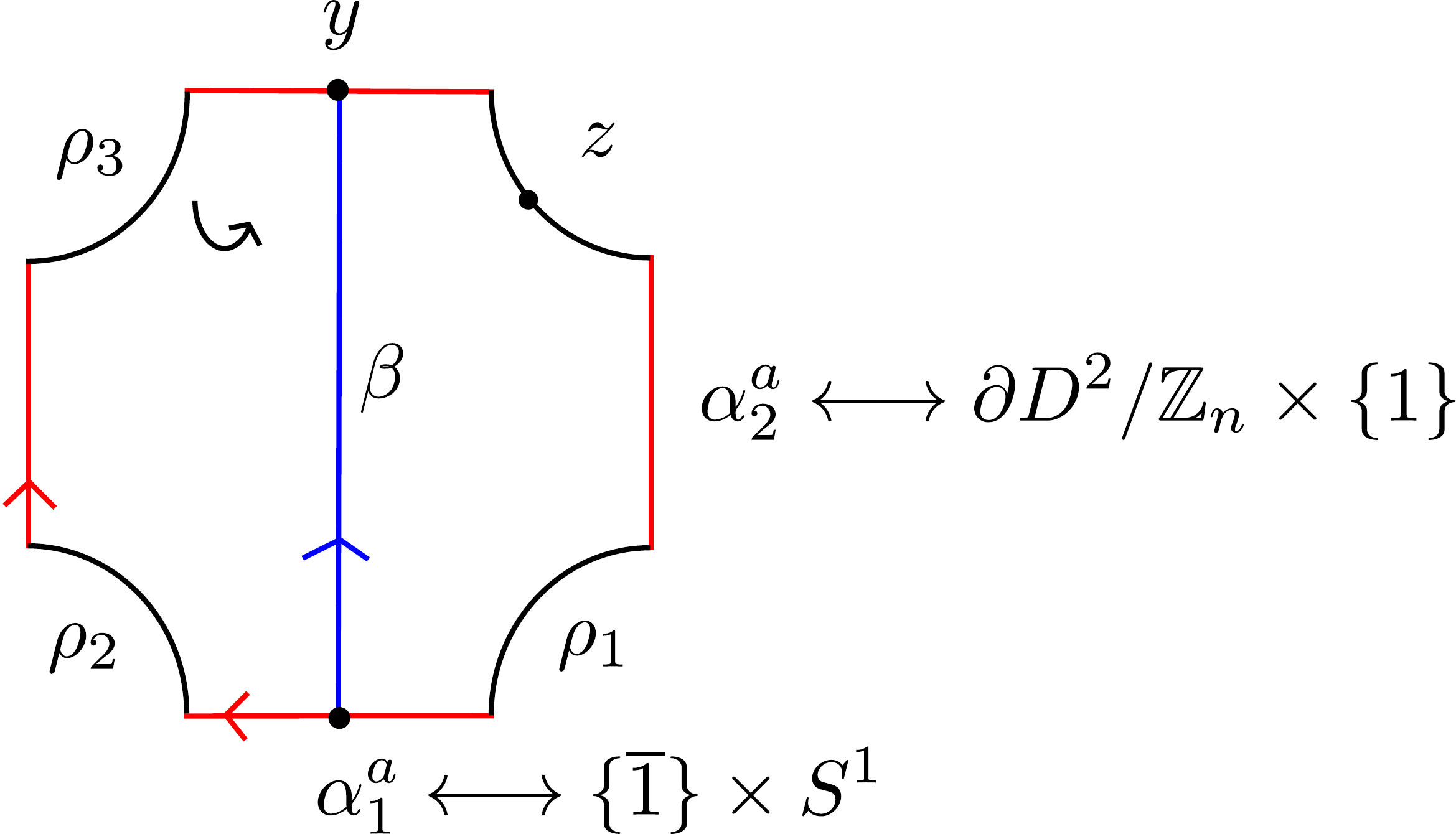}}
        \caption{}
        \label{fig:S2xS1a}
    \end{subfigure}%
    ~ 
    \begin{subfigure}[t]{0.41\textwidth}
        \centering{
\resizebox{73mm}{!}{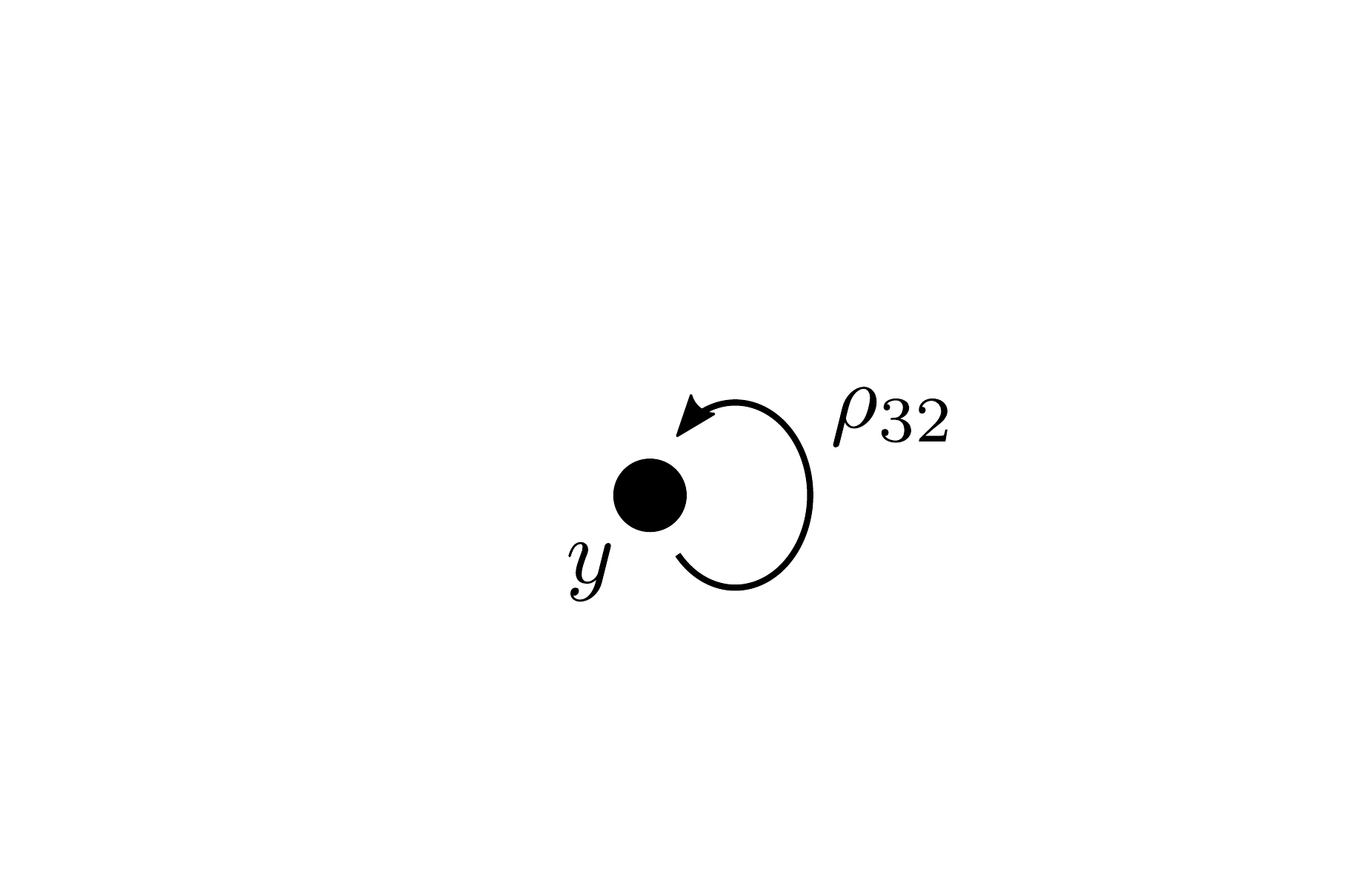}}
        \caption{}
        \label{fig:S2xS1b}
    \end{subfigure}
\caption{On the left a bordered Heegaard diagram for $(E, \phi_{\partial E})$ when the 3-orbifold is $(S^2 \times S^1, \{1\} \times S^1, n)$. On the right the corresponding type A structure $\widehat{\mathit{CFA}} (E, \phi_{\partial E})$.}
\end{figure}

\subsubsection{$\big(L(p, -q), K, n\big)$} Think of $L(p, -q)$ as two copies of $D^2 \times S^1$ glued together. Singularize one of them. This will be $N$ and $K$ will be the core of $N$. Take $p \geq 2$, $1 \leq q \leq p-1$, and $\textrm{gcd}(p,q)=1$. Then Figure \ref{fig:L(p,q)a} gives a bordered Heegaard diagram for $(E, \phi_{\partial E})$. The induced type A structure $\widehat{\mathit{CFA}} (E, \phi_{\partial E})$ is shown in Figure \ref{fig:L(p,q)b}. Note that $\widehat{\mathit{CFA}} (E, \phi_{\partial E})$ is bounded, unlike the previous examples. The differential in $\widehat{\mathit{CFO}}\big(L(p, -q), K, n\big) = \widehat{\mathit{CFA}} (E, \phi_{\partial E}) \boxtimes D_N$ is trivial, so $\widehat{\mathit{HFO}}\big(L(p, -q), K, n\big) = \mathbb{Z}_2 \langle y_1 \otimes x_1, \ldots, y_1 \otimes x_n, \ldots, y_p \otimes x_1, \ldots, y_p \otimes x_n \rangle \cong \big(\mathbb{Z}_2 \big)^{np}$. The rank of $\widehat{\mathit{HFO}}\big(L(p, -q), K, n\big)$ is $n$ times the rank of $\widehat{\mathit{HF}}\big(L(p, -q)\big)$.

\begin{figure}[h]
    \centering
    \begin{subfigure}[t]{0.6\textwidth}
    \centering{
\resizebox{90mm}{!}{}}
        \caption{}
        \label{fig:L(p,q)a}
    \end{subfigure}%
    ~ 
    \begin{subfigure}[t]{0.4\textwidth}
        \centering{
\resizebox{60mm}{!}{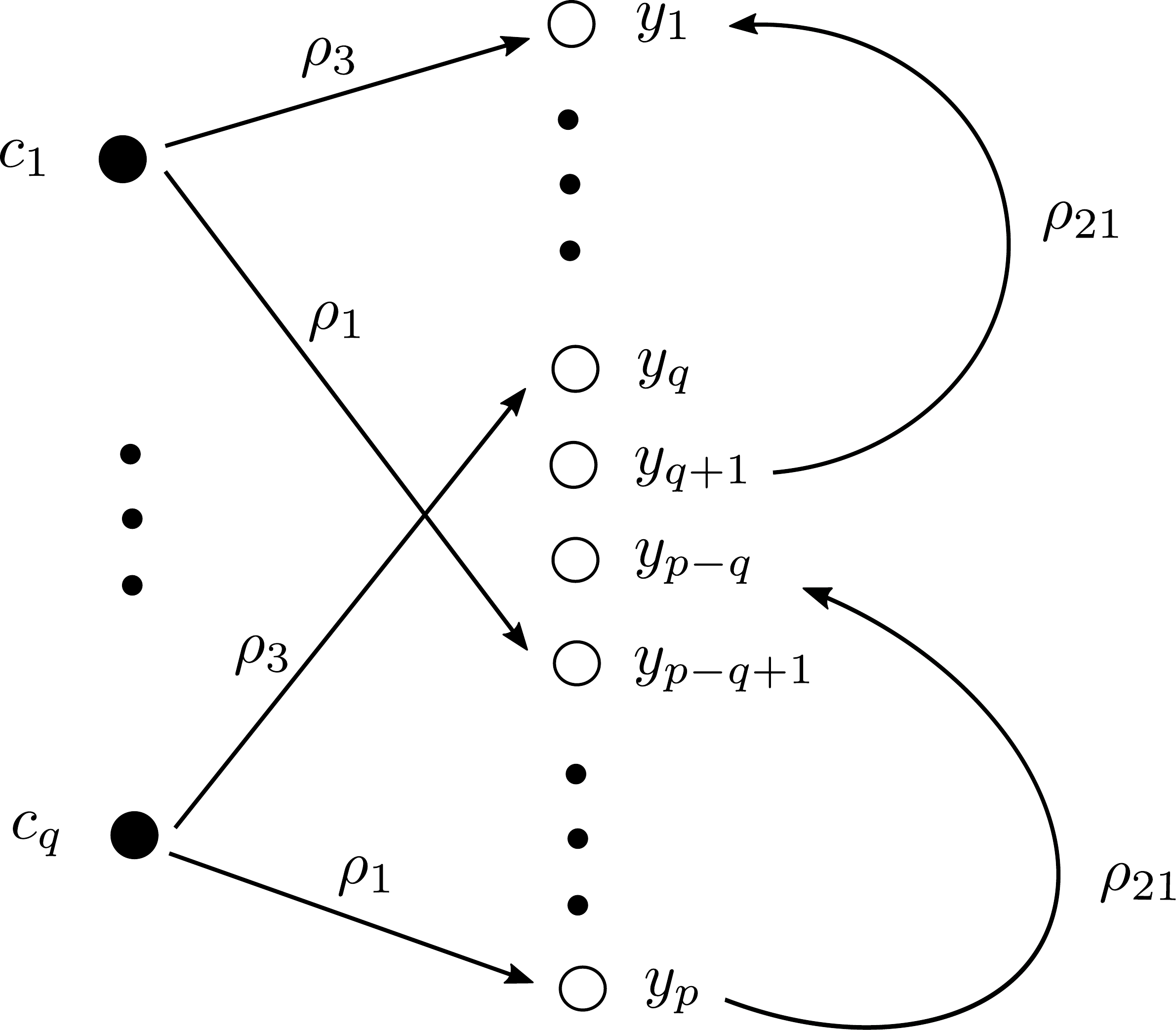}}
        \caption{}
        \label{fig:L(p,q)b}
    \end{subfigure}
\caption{On the left a bordered Heegaard diagram for $(E, \phi_{\partial E})$ when the 3-orbifold is $\big(L(p, -q), K, n\big)$. On the right the corresponding type A structure $\widehat{\mathit{CFA}} (E, \phi_{\partial E})$.}
\end{figure}

\section{Proof of Theorem \ref{relationshipHFhat}}\label{sec4}

We now restrict our attention to 3-orbifolds coming from integral surgeries on knots in $S^3$, and prove Theorem \ref{relationshipHFhat}. Let $Y$ be $r$-surgery on a knot $K \subset S^3$. Think of $Y$ as $(D^2 \times S^1) \cup E$, where $E$ is the exterior of $K$ in $S^3$ and we're identifying the meridian $\partial D^2 \times \{1\} \subset \partial D^2 \times S^1$ with the curve $\gamma = rm+\ell \subset \partial E$. If we replace $(D^2 \times S^1)$ with $(D^2 \times S^1) / \mathbb{Z}_n$, then we get the 3-orbifold $Y^{\textrm{orb}}=(Y, K, n)$. As in Section \ref{knotexteriors}, let $\phi_r: F \rightarrow \partial E$ be an orientation-preserving parameterization that sends $\alpha_1^a$ to $m$ and $\alpha_2^a$ to $\gamma$.

We first consider the case $\varepsilon (K) = 1$. By Part 1 of \cite[Lemma 3.2]{Hom}, we can find vertically and horizontally simplified bases $\{\bm{w_0}, \ldots, \bm{w_{2s}}\}$ and $\{\bm{w_0}', \ldots, \bm{w_{2s}}'\}$ for $\mathit{CFK}^-(K)$ over $\mathbb{Z}_2[U]$, with the following properties (possibly after reordering):

\begin{itemize}
\item $\bm{w_0}$ is the generator of the vertical complex $C^{\textrm{vert}}$,
\item $\bm{w_0}'$ is the generator of the horizontal complex $C^{\textrm{horz}}$,
\item $\partial ^{\textrm{horz}}\big(\overline{U^{A(\bm{w_1}')} \bm{w_1}'}\big)=\overline{U^{A(\bm{w_2}')} \bm{w_2}'}$, and
\item $\bm{w_2}' = \bm{w_0}$.
\end{itemize}

\noindent Fix such bases $\{\bm{w_0}, \ldots, \bm{w_{2s}}\}$ and $\{\bm{w_0}', \ldots, \bm{w_{2s}}'\}$ for $\mathit{CFK}^-(K)$. As discussed in Section \ref{knotexteriors}, any pair of horizontally and vertically simplified bases for $\mathit{CFK}^-(K)$ gives rise to a decorated, directed graph that represents $\widehat{\mathit{CFA}} (E, \phi_r)$. Let $\Gamma_r$ be the graph for $\widehat{\mathit{CFA}} (E, \phi_r)$ coming from $\{\bm{w_0}, \ldots, \bm{w_{2s}}\}$ and $\{\bm{w_0}', \ldots, \bm{w_{2s}}'\}$. We know that $\Gamma_r$ can't contain any coherently oriented cycles because $\bm{w_0} = \bm{w_2}' \neq \bm{w_0}'$ (and because there's no other way to get coherently oriented cycles in $\Gamma_r$). This implies that $\widehat{\mathit{CFA}} (E, \phi_r)$ is bounded.

Now consider the $\mathbb{Z}_2$-chain complexes
\[
\widehat{\mathit{CF}}(Y) = \Big( \widehat{\mathit{CFA}} (E, \phi_r) \boxtimes D_{D^2 \times S^1}, \partial^{\boxtimes} \Big)
\]
\noindent and
\[
\widehat{\mathit{CFO}}(Y^{\textrm{orb}}) = \Big( \widehat{\mathit{CFA}} (E, \phi_r) \boxtimes D_{(D^2 \times S^1) / \mathbb{Z}_n}, \partial^{\boxtimes}_{\textrm{orb}} \Big).
\]
\noindent We want to show $n \cdot \textrm{rank}\big(\widehat{\mathit{HF}}(Y)\big) = \textrm{rank}\big(\widehat{\mathit{HFO}}(Y^{\textrm{orb}})\big)$. We do this by comparing $\textrm{ker}(\partial^{\boxtimes})$ to $\textrm{ker}(\partial^{\boxtimes}_{\textrm{orb}})$, and $\textrm{im}(\partial^{\boxtimes})$ to $\textrm{im}(\partial^{\boxtimes}_{\textrm{orb}})$. Recall from Section \ref{knotexteriors} that $\bm{\kappa_e^i}, \bm{\lambda_f^i},$ and $\bm{\gamma_g}$ form a basis for $\widehat{\mathit{CFA}}(E, \phi_r) \cdot \iota_2$. Here $i \in \{0, \ldots, 2s\}$, $e \in \{1, \ldots, \ell_i\}$, $f \in \{1, \ldots, \ell_i'\}$, and $g \in \{1, \ldots, d=|2\tau(K)-r|\}$. For convenience, let $\bm{\alpha}$ be any one of these basis elements. Then $\widehat{\mathit{CF}}(Y)$ is generated by elements of the form $\bm{\alpha} \otimes \bm{x}$ and $\widehat{\mathit{CFO}}(Y^{\textrm{orb}})$ is generated by elements of the form $\bm{\alpha} \otimes \bm{x_j}$, where $j \in \{1, \ldots, n\}$.

\begin{Claim}\label{Claim1}
$n \cdot \emph{rank}\big(\emph{ker}(\partial^{\boxtimes})\big) = \emph{rank}\big(\emph{ker}(\partial^{\boxtimes}_{\emph{orb}})\big)$.
\end{Claim}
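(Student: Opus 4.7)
The plan is to set up an explicit linear isomorphism $\ker(\partial^{\boxtimes}_{\mathrm{orb}}) \cong \bigoplus_{j=1}^n \ker(\partial^{\boxtimes})$ by exploiting the combinatorial difference between the type D structures $D_{D^2 \times S^1}$ and $D_N$. These two agree on their $\iota_1$-piece (a single generator $y$), but $D_N$ replaces the unique $\iota_2$-generator of $D_{D^2 \times S^1}$ with an $n$-element $\rho_{23}$-chain $x_1,\ldots,x_n$, with $y$ attached at the two ends of this chain by a $\rho_3$-edge and a $\rho_{123}$-edge (as depicted in Figure \ref{fig:CFD_Orbifold_Solid_Torus}). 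Consequently, each $\iota_2$-generator $\bm{\alpha}\otimes x$ of $\widehat{\mathit{CF}}(Y)$ lifts to $n$ generators $\bm{\alpha}\otimes x_1,\ldots,\bm{\alpha}\otimes x_n$ of $\widehat{\mathit{CFO}}(Y^{\mathrm{orb}})$, while each $\iota_1$-generator $\bm{w_i}\otimes y$ lifts to itself.

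First I would write out $\partial^{\boxtimes}_{\mathrm{orb}}$ explicitly on each generator using the edges of the graph $\Gamma_r$ coming from the chosen $\varepsilon(K)=1$ simplified bases together with the edges of $D_N$. Because the only type A multiplications in $\widehat{\mathit{CFA}}(E,\phi_r)$ are those coded by the vertical, horizontal, and unstable chains from Section \ref{knotexteriors}, and because the only new type D edges in $D_N$ are the $\rho_{23}$-steps between consecutive $x_j$'s, the contribution to $\partial^{\boxtimes}_{\mathrm{orb}}$ from $\bm{\alpha}\otimes x_j$ has exactly the same \emph{shape} as the contribution of $\bm{\alpha}\otimes x$ to $\partial^{\boxtimes}$, except that each $\rho_{23}$-step in a path shifts the $j$-index downward, and a path returns to some $\bm{w_{i'}}\otimes y$ only when it reaches $x_1$ via $\rho_3$ or $x_n$ via $\rho_{123}$. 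I would then define maps $\iota_j:\widehat{\mathit{CF}}(Y)\to\widehat{\mathit{CFO}}(Y^{\mathrm{orb}})$ for $j=1,\ldots,n$ by $\bm{\alpha}\otimes x \mapsto \bm{\alpha}\otimes x_j$, with the $\iota_1$-generators $\bm{w_i}\otimes y$ sent to themselves only in the $j=1$ summand and to $0$ otherwise, and assemble them into $\Phi:\bigoplus_{j=1}^n \widehat{\mathit{CF}}(Y)\to \widehat{\mathit{CFO}}(Y^{\mathrm{orb}})$.

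The heart of the proof is verifying that $\Phi$ restricts to a bijection between $\bigoplus_{j=1}^n \ker(\partial^{\boxtimes})$ and $\ker(\partial^{\boxtimes}_{\mathrm{orb}})$. I would argue that an arbitrary cycle $\xi\in\ker(\partial^{\boxtimes}_{\mathrm{orb}})$ admits a unique decomposition $\xi=\sum_{j=1}^n \iota_j(\xi_j)$ with $\xi_j\in\ker(\partial^{\boxtimes})$, peeling off contributions starting from $j=n$ (where the $\rho_{23}$-chain terminates) and propagating the resulting cycle conditions back through the chain. The main obstacle will be the cycle equations at the $\iota_1$-generators $\bm{w_{i'}}\otimes y$: the differential of $\bm{\alpha}\otimes x_j$ can contribute to $\bm{w_{i'}}\otimes y$ both via the $\rho_3$-edge at $x_1$ (after $j-1$ consecutive $\rho_{23}$-steps) and via the $\rho_{123}$-edge at $x_n$ (after $n-j$ consecutive $\rho_{23}$-steps), so the orbifold cycle equations couple several $j$'s together. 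I expect that the distinguishing property $\bm{w_2}'=\bm{w_0}$ of the $\varepsilon(K)=1$ simplified bases, which is exactly what makes $\widehat{\mathit{CFA}}(E,\phi_r)$ bounded and tightly constrains the unstable chain, is also what forces these cross-$j$ contributions to cancel pairwise, producing the required clean $n$-fold splitting and hence the rank identity.
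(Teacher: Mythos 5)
Your overall plan---compare the two differentials component by component along the $x_j$-basis of $D_N$ and read off the kernel dimensions---is the same plan the paper follows. But the combinatorial description of $D_N$ on which you base the verification is wrong, and this derails the argument in a way that matters.

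The type D structure $D_N$ has \emph{no} $\iota_1$-generator. The paper states explicitly that ``the generators $x_i$ of the type D structure $D_N$ correspond to where the $\beta$ curve intersects the $\alpha^a_1$ arc,'' so all $n$ generators $x_1,\ldots,x_n$ carry the idempotent $\iota_2$, and they form a \emph{closed} $\rho_{23}$-cycle $x_1 \to x_2 \to \cdots \to x_n \to x_1$ (which is exactly why $D_N$ is unbounded and generalizes the one-generator $\rho_{23}$-self-loop for $D^2\times S^1$). There is no $y$, no $\rho_3$-edge, and no $\rho_{123}$-edge in $D_N$; you seem to be describing the bounded model $D_N'$ of Figure~\ref{fig:Bounded_DN}, but Claim~\ref{Claim1} takes place in $\widehat{\mathit{CFA}}(E,\phi_r)\boxtimes D_N$ with $D_N$ itself, since $\widehat{\mathit{CFA}}(E,\phi_r)$ is bounded when $\varepsilon(K)=1$. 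Consequently $\widehat{\mathit{CF}}(Y)$ and $\widehat{\mathit{CFO}}(Y^{\text{orb}})$ have \emph{no} generators of the form $\bm{w_{i'}}\otimes y$ at all: they consist entirely of $\bm{\alpha}\otimes x$ and $\bm{\alpha}\otimes x_j$ with $\bm{\alpha}$ an $\iota_2$-generator of $\widehat{\mathit{CFA}}(E,\phi_r)$. This has two consequences for your argument. First, the ``main obstacle'' you isolate---cycle equations at $\iota_1$-generators coupling several $j$'s together via $\rho_3$- and $\rho_{123}$-edges of $D_N$---does not exist; there is nothing to cancel. Second, the ``peeling from $j=n$'' step cannot be carried out as described, because the $\rho_{23}$-chain is cyclic and has no terminal vertex.

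The verification you leave to speculation is actually the easy part, and the property $\bm{w_2}'=\bm{w_0}$ plays a different role than the one you assign it. Since $\delta_k(x_j)=\rho_{23}^{\otimes k}\otimes x_{j+k}$, the orbifold differential is $\partial^{\boxtimes}_{\text{orb}}(\bm{\alpha}\otimes x_j)=\sum_{k\ge 0} m_{k+1}(\bm{\alpha},\rho_{23}^{\otimes k})\otimes x_{j+k}$, i.e., it is ``block-shift equivariant.'' With the $\varepsilon(K)=1$ bases of $\Gamma_r$, the only contributing term is $m_2(\bm{\alpha},\rho_{23})$ (the unique length-two path $\bm{\alpha}\xrightarrow{\rho_2}w\xrightarrow{\rho_3}\bm{\beta}$), and distinct $\bm{\alpha}$'s map to distinct $\bm{\beta}$'s; this is exactly what Claim~\ref{Claim2} records. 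So $\partial^{\boxtimes}_{\text{orb}}$ decomposes as a direct sum of $n$ copies of the same partial matching $m_2(\,\cdot\,,\rho_{23})$, each shifted one step in the $x_j$-index, and the kernel decomposes accordingly as $\bigoplus_{j=1}^n\ker\bigl(m_2(\,\cdot\,,\rho_{23})\bigr)\otimes x_j$ with no cross-$j$ coupling to cancel. The role of $\bm{w_2}'=\bm{w_0}$ is to prevent $\Gamma_r$ from containing a coherently oriented cycle, which is what makes $\widehat{\mathit{CFA}}(E,\phi_r)$ bounded so the box tensor product is defined; it is not what forces a cancellation. In short: right scaffolding, but the proof rests on a misreading of $D_N$, and the genuine verification step is replaced by a speculation aimed at an obstacle that is not there.
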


\begin{proof}
Because the type D structure map in $D_{(D^2 \times S^1) / \mathbb{Z}_n}$ is essentially $n$ copies of the type D structure map in $D_{D^2 \times S^1}$, $\partial^{\boxtimes} (\bm{\alpha} \otimes \bm{x})=0$ implies $\partial^{\boxtimes}_{\textrm{orb}} (\bm{\alpha} \otimes \bm{x_j})=0$ for every $j$. Since there is no other way for $\partial^{\boxtimes}_{\textrm{orb}}$ to be trivial on a basis element $\bm{\alpha} \otimes \bm{x_j}$ of $\widehat{\mathit{CFO}}(Y^{\textrm{orb}})$, we have that $n \cdot \textrm{rank}\big(\textrm{ker}(\partial^{\boxtimes})\big) = \textrm{rank}\big(\textrm{ker}(\partial^{\boxtimes}_{\textrm{orb}})\big)$.
\end{proof}

\begin{Claim}\label{Claim2}
Suppose $\partial^{\boxtimes} (\bm{\alpha} \otimes \bm{x}) \neq 0$. Then for every $j$, $\partial^{\boxtimes}_{\emph{orb}} (\bm{\alpha} \otimes \bm{x_j}) \neq 0$. Furthermore, there exists $\bm{\beta} \in \{\bm{\kappa_1^i}, \bm{\gamma_1}\}$ so that $\partial^{\boxtimes} (\bm{\alpha} \otimes \bm{x}) = \bm{\beta} \otimes \bm{x}$ and for every $j$, $\partial^{\boxtimes}_{\emph{orb}} (\bm{\alpha} \otimes \bm{x_j}) = \bm{\beta} \otimes \bm{x_{j+1}}$, with $j+1$ considered mod $n$.
\end{Claim}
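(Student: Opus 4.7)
The plan is to unfold both box differentials directly and exploit the structural fact that $D_N := D_{(D^2\times S^1)/\mathbb{Z}_n}$ is a cyclic $n$-fold ``unrolling'' of $D_{D^2\times S^1}$: the two type D structures carry identical Reeb-label data along the analogous directed edges, but where $\delta_k(\bm{x})$ in $D_{D^2\times S^1}$ terminates at its single generator $\bm{x}$, the corresponding iterate $\delta_k(\bm{x_j})$ in $D_N$ terminates at the cyclically shifted generator $\bm{x_{j+c}}$, with $c$ counting how many times the local loop has been traversed.

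First I would expand
\[
\partial^{\boxtimes}(\bm{\alpha}\otimes\bm{x}) = \sum_{k\geq 0} (m_{k+1}\otimes \mathrm{id})\bigl(\bm{\alpha}\otimes\delta_k(\bm{x})\bigr)
\]
and case-analyze over the three families $\bm{\alpha} \in \{\bm{\kappa_e^i},\bm{\lambda_f^i},\bm{\gamma_g}\}$. Via the Hedden--Levine dualization of the graph of $\widehat{\mathit{CFA}}(E,\phi_r)$ described in Section \ref{knotexteriors}, I would match the Reeb sequences produced by $\delta_k(\bm{x})$ against the type A multiplications $m_{k+1}(\bm{\alpha},\ldots)$. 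The $\bm{\lambda_f^i}$ never contribute, since the horizontal-chain multiplications land on $\bullet$-vertices via Reeb words $\rho_2,\rho_{21},\ldots$ that do not arise from $\delta_k(\bm{x})$. For $\bm{\alpha} = \bm{\kappa_e^i}$ with $e\geq 2$, the matching multiplication cascades backward along the $\rho_{21}$-edges of the vertical chain and terminates at $\bm{\kappa_1^i}$; the case $e=1$ already produces $\bm{\kappa_1^i}$ directly. An analogous analysis for $\bm{\alpha}=\bm{\gamma_g}$ yields $\bm{\gamma_1}$. This establishes the first equation with $\bm{\beta}\in\{\bm{\kappa_1^i},\bm{\gamma_1}\}$.

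Next I would rerun the expansion with $D_N$ in place of $D_{D^2\times S^1}$. Because the two structures agree locally on Reeb labels, exactly the same multiplications $m_{k+1}$ fire on the $\widehat{\mathit{CFA}}$ side; the only change is in the terminal type D factor, which becomes $\bm{x_{j+1}}$ rather than $\bm{x}$ (the shift is $1$ because the minimal Reeb word producing the $\bm{\beta}$-output corresponds to a single loop around $D_N$). This gives $\partial^{\boxtimes}_{\mathrm{orb}}(\bm{\alpha}\otimes\bm{x_j}) = \bm{\beta}\otimes\bm{x_{j+1}}$ with indices mod $n$, and in particular the differential is nonzero for every $j$.

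The main obstacle will be ruling out extra contributions in the orbifold case coming from longer iterates $\delta_k(\bm{x_j})$ that wrap more than once around the cyclic $D_N$. The boundedness of $\widehat{\mathit{CFA}}(E,\phi_r)$ established earlier in this section under the assumption $\varepsilon(K)=1$ forces the sum to be finite, and it remains to check that no multi-wrap Reeb word supports a nonzero $m_{k+1}$ on the families $\bm{\kappa_e^i},\bm{\lambda_f^i},\bm{\gamma_g}$. A careful inspection of the dualized graph in Section \ref{knotexteriors}, together with the observation that any wrap-around Reeb word concatenates in a way that violates the admissibility constraint $\rho_j\rho_{j+1}\neq 0$ needed for a non-trivial multiplication, should complete the argument.
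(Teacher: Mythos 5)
Your overall strategy---unfold $\partial^{\boxtimes}$ and exploit that $D_N$ is an $n$-fold unrolling of $D_{D^2\times S^1}$---is correct and is essentially what the paper does. But your case analysis over the three families $\bm{\kappa_e^i}$, $\bm{\lambda_f^i}$, $\bm{\gamma_g}$ is reversed, and this is a substantive error, not a bookkeeping one. The box differential involves multiplications $m_{k+1}(\bm{\alpha}, \rho_{23}, \ldots, \rho_{23})$, which by the Hedden--Levine dualization correspond to directed paths in $\Gamma_r$ starting at $\bm{\alpha}$ whose edge labels concatenate to the string $2323\cdots 23$. The only edge labels ending in $3$ are $\rho_3$, so such a path must end with a $\rho_3$-edge, whose targets are exactly $\bm{\kappa_1^i}$ and $\bm{\gamma_1}$; the only edge labels whose concatenation can begin $23$ start with $\rho_2$, and the $\rho_2$-edges emanate from $\bm{\lambda_{\ell_i'}^i}$ (and from $\bm{\gamma_d}$ when $r > 2\tau(K)$). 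So it is precisely the terminal elements of horizontal chains that serve as sources $\bm{\alpha}$, and the $\bm{\kappa_1^i}$, $\bm{\gamma_1}$ are the outputs $\bm{\beta}$. Your claim that ``$\bm{\lambda_f^i}$ never contribute'' directly contradicts the paper's own example $m_2(\bm{\lambda_{\ell_1'}^1}, \rho_{23}) = \bm{\gamma_1}$. Conversely, your claimed cascade for $\bm{\kappa_e^i}$ with $e\geq 2$ cannot fire: the outgoing edges are $\rho_{21}$, whose string $21$ rewrites to the inputs $\rho_2, \rho_1$, never to a single $\rho_{23}$. The $\bm{\kappa_e^i}$ are therefore never sources of a nonzero box differential (and $\bm{\kappa_1^i}$ is a sink of the vertical chain, again with no contribution).

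On the multi-wrap worry, you are right to flag it, but the proposed resolution is incorrect. The $m_{k+1}(\bm{\alpha}, \rho_{23}, \ldots, \rho_{23})$ takes the $\rho_{23}$'s as separate $A_\infty$-inputs; they are never multiplied together in $\mathcal{A}$, so the relation $\rho_3\rho_2 = 0$ imposes no constraint and does not kill a longer $2323\cdots$ path. Ruling out $k\geq 2$ requires an actual inspection of $\Gamma_r$: one must check that no vertex reachable from a $\rho_2$-edge followed by a $\rho_3$-edge (i.e.\ no $\bm{\kappa_1^i}$ or $\bm{\gamma_1}$) supports a further $\rho_2$-edge, and (in the $r = 2\tau(K)$ case) track the single $\rho_{32}$-edge of the unstable chain separately. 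The paper's own proof is terse on this point; a complete argument should spell it out, but the mechanism is the local structure of $\Gamma_r$, not an algebraic vanishing of Reeb products.
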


\begin{proof}
The first statement is clear. As for the second one, if $\partial^{\boxtimes} (\bm{\alpha} \otimes \bm{x})$ is nontrivial, then $\bm{\beta}$ is the target of a directed edge labeled $\rho_3$ in $\Gamma_r$, and this happens exactly when $\bm{\beta} \in \{\bm{\kappa_1^i}, \bm{\gamma_1}\}$. For example, when $r < 2\tau(K)$, $\Gamma_r$ contains a piece that looks like
\[
\stackrel{\bm{\lambda_{\ell_1'}^1}}{\circ} \xrightarrow{\textrm{$\rho_2$}} \stackrel{\bm{w_2'=w_0}}{\bullet} \xrightarrow{\textrm{$\rho_{3}$}} \stackrel{\bm{\gamma_1}}{\circ}.    
\]
\noindent This gives us the nontrivial multiplication $m_2(\bm{\lambda_{\ell_1'}^1}, \rho_{23})= \bm{\gamma_1}$ in $\widehat{\mathit{CFA}} (E, \phi_r)$, which implies that $\partial^{\boxtimes} (\bm{\lambda_{\ell_1'}^1} \otimes \bm{x}) = \bm{\gamma_1} \otimes \bm{x}$ and $\partial^{\boxtimes}_{\textrm{orb}} (\bm{\lambda_{\ell_1'}^1} \otimes \bm{x_j}) = \bm{\gamma_1} \otimes \bm{x_{j+1}}$.
\end{proof}

\noindent It follows from Claims \ref{Claim1} and \ref{Claim2} that $n \cdot \textrm{rank}\big(\widehat{\mathit{HF}}(Y)\big) = \textrm{rank}\big(\widehat{\mathit{HFO}}(Y^{\textrm{orb}})\big)$.

A similar argument shows that when $\varepsilon(K)=-1$, $n \cdot \textrm{rank}\big(\widehat{\mathit{HF}}(Y)\big) = \textrm{rank}\big(\widehat{\mathit{HFO}}(Y^{\textrm{orb}})\big)$. This is because Part 2 of \cite[Lemma 3.2]{Hom} gives us vertically and horizontally simplified bases $\{\bm{w_0}, \ldots, \bm{w_{2s}}\}$ and $\{\bm{w_0}', \ldots, \bm{w_{2s}}'\}$ for $\mathit{CFK}^-(K)$ over $\mathbb{Z}_2[U]$, with the following properties (possibly after reordering):

\begin{itemize}
\item $\bm{w_0}$ is the generator of the vertical complex $C^{\textrm{vert}}$,
\item $\bm{w_0}'$ is the generator of the horizontal complex $C^{\textrm{horz}}$,
\item $\partial ^{\textrm{horz}}\big(\overline{U^{A(\bm{w_1}')} \bm{w_1}'}\big)=\overline{U^{A(\bm{w_2}')} \bm{w_2}'}$, and
\item $\bm{w_1}' = \bm{w_0}$.
\end{itemize}

Now suppose $\varepsilon(K)=0$. By \cite[Lemma 3.3]{Hom}, we can find vertically and horizontally simplified bases $\{\bm{w_0}, \ldots, \bm{w_{2s}}\}$ and $\{\bm{w_0}', \ldots, \bm{w_{2s}}'\}$ for $\mathit{CFK}^-(K)$ over $\mathbb{Z}_2[U]$ so that the generator $\bm{w_0}$ of the vertical complex $C^{\textrm{vert}}$ equals the generator $\bm{w_0}'$ of the horizontal complex $C^{\textrm{horz}}$. Fix such bases $\{\bm{w_0}, \ldots, \bm{w_{2s}}\}$ and $\{\bm{w_0}', \ldots, \bm{w_{2s}}'\}$. Let $\Gamma_r$ be the graph for $\widehat{\mathit{CFA}} (E, \phi_r)$ coming from $\{\bm{w_0}, \ldots, \bm{w_{2s}}\}$ and $\{\bm{w_0}', \ldots, \bm{w_{2s}}'\}$. Note that $\tau(K)=0$ because $\varepsilon(K)=0$. We have two cases: either $r \neq 0$ or $r=0$. If $r \neq 0$, then $r \neq 2\tau(K)$. This means that  $\Gamma_r$ doesn't contain any coherently oriented cycles, and we can use the argument in the $\varepsilon(K)=1$ case above to show that $n \cdot \textrm{rank}\big(\widehat{\mathit{HF}}(Y)\big) = \textrm{rank}\big(\widehat{\mathit{HFO}}(Y^{\textrm{orb}})\big)$.

Assume $r=0$. Then $r=2\tau(K)$ and the unstable chain in $\Gamma_r$ is a coherently oriented cycle, which implies that $\widehat{\mathit{CFA}} (E, \phi_r)$ is unbounded. Since the unstable chain doesn't interact with the rest of the type A structure, we can express $\widehat{\mathit{CFA}} (E, \phi_r)$ as $\widehat{\mathit{CFA}} (E, \phi_r)_1 \oplus \widehat{\mathit{CFA}} (E, \phi_r)_2$, where $\widehat{\mathit{CFA}} (E, \phi_r)_1$ is the unbounded type A structure corresponding to the unstable chain and $\widehat{\mathit{CFA}} (E, \phi_r)_2$ is the bounded type A structure corresponding to the complement of the unstable chain. Then we have
\[
\widehat{\mathit{CF}}(Y) \simeq \Big( \widehat{\mathit{CFA}}(E, \phi_r)_1 \boxtimes D_{D^2 \times S^1}' \Big) \oplus \Big( \widehat{\mathit{CFA}}(E, \phi_r)_2 \boxtimes D_{D^2 \times S^1} \Big)
\]
\noindent and
\[
\widehat{\mathit{CFO}}(Y^{\textrm{orb}}) \simeq \Big( \widehat{\mathit{CFA}}(E, \phi_r)_1 \boxtimes D_{(D^2 \times S^1) / \mathbb{Z}_n}' \Big) \oplus \Big( \widehat{\mathit{CFA}}(E, \phi_r)_2 \boxtimes D_{(D^2 \times S^1) / \mathbb{Z}_n} \Big),
\]
\noindent where $D_{D^2 \times S^1}'$ and $D_{(D^2 \times S^1) / \mathbb{Z}_n}'$ are the bounded type D structures in Figure \ref{fig:Bounded_DN}.

This means that $\widehat{\mathit{HF}}(Y)$ and $\widehat{\mathit{HFO}}(Y^{\textrm{orb}})$ admit the following decompositions:
\[
\widehat{\mathit{HF}}(Y) \cong H_1 \oplus H_2
\]
\noindent and
\[
\widehat{\mathit{HFO}}(Y^{\textrm{orb}}) \cong H_1^{\textrm{orb}} \oplus H_2^{\textrm{orb}},
\]
\noindent where $H_i$ denotes the homology of the $i$th piece in $\widehat{\mathit{CF}}(Y)$ and $H_i^{\textrm{orb}}$ denotes the homology of the $i$th piece in $\widehat{\mathit{CFO}}(Y^{\textrm{orb}})$. From Example \ref{S2S1orbifoldex}, we have that $\textrm{rank}(H_1^{\textrm{orb}}) = 2 = \textrm{rank}(H_1)$. By the argument in the $\varepsilon(K)=1$ case, $\textrm{rank}(H_2^{\textrm{orb}}) = n \cdot \textrm{rank}(H_2)$. Consequently, we get that $\textrm{rank}\big(\widehat{\mathit{HFO}}(Y^{\textrm{orb}}) \big) = 2 + n \cdot \textrm{rank}(H_2)$, which implies that $\textrm{rank}\big(\widehat{\mathit{HFO}}(Y^{\textrm{orb}}) \big) = n \cdot \textrm{rank}\big(\widehat{\mathit{HF}}(Y)\big) -2n +2$, as needed. 

{\qed} \popQED

\subsection{Examples}\label{moreexs}

We conclude with a couple of examples.

\subsubsection{}Let $K$ be the left-handed trefoil $T(2,-3)$. Fix $n \in \mathbb{Z}$. Take $r=0$. Then $\widehat{\mathit{CFA}}(E, \phi_0)$ is given by the graph in Figure \ref{fig:LHT}, and $\widehat{\mathit{CFO}}(Y^{\textrm{orb}}) = \widehat{\mathit{CFA}}(E, \phi_0) \boxtimes D_{(D^2 \times S^1) / \mathbb{Z}_n}$ is generated by $\bm{\kappa^1_1} \otimes \bm{x_j}$, $\bm{\lambda^1_1} \otimes \bm{x_j}$, $\bm{\gamma_1} \otimes \bm{x_j}$, and $\bm{\gamma_2} \otimes \bm{x_j}$. The only nontrivial differential is $\partial^{\boxtimes}(\bm{\gamma_2} \otimes \bm{x_j}) = \bm{\kappa^1_1} \otimes \bm{x_{j+1}}$. This implies that $\widehat{\mathit{HFO}}(Y^\textrm{orb}) = \mathbb{Z}_2 \langle \bm{\lambda^1_1} \otimes \bm{x_j}, \bm{\gamma_1} \otimes \bm{x_j} \rangle$, which has rank $2n$. Note that this agrees with Theorem \ref{relationshipHFhat}, since $\varepsilon(K)=-1$ and $\textrm{rank}\big( \widehat{\mathit{HF}}(Y) \big) =2$.

\begin{figure}[h]
\centering{
\resizebox{65mm}{!}{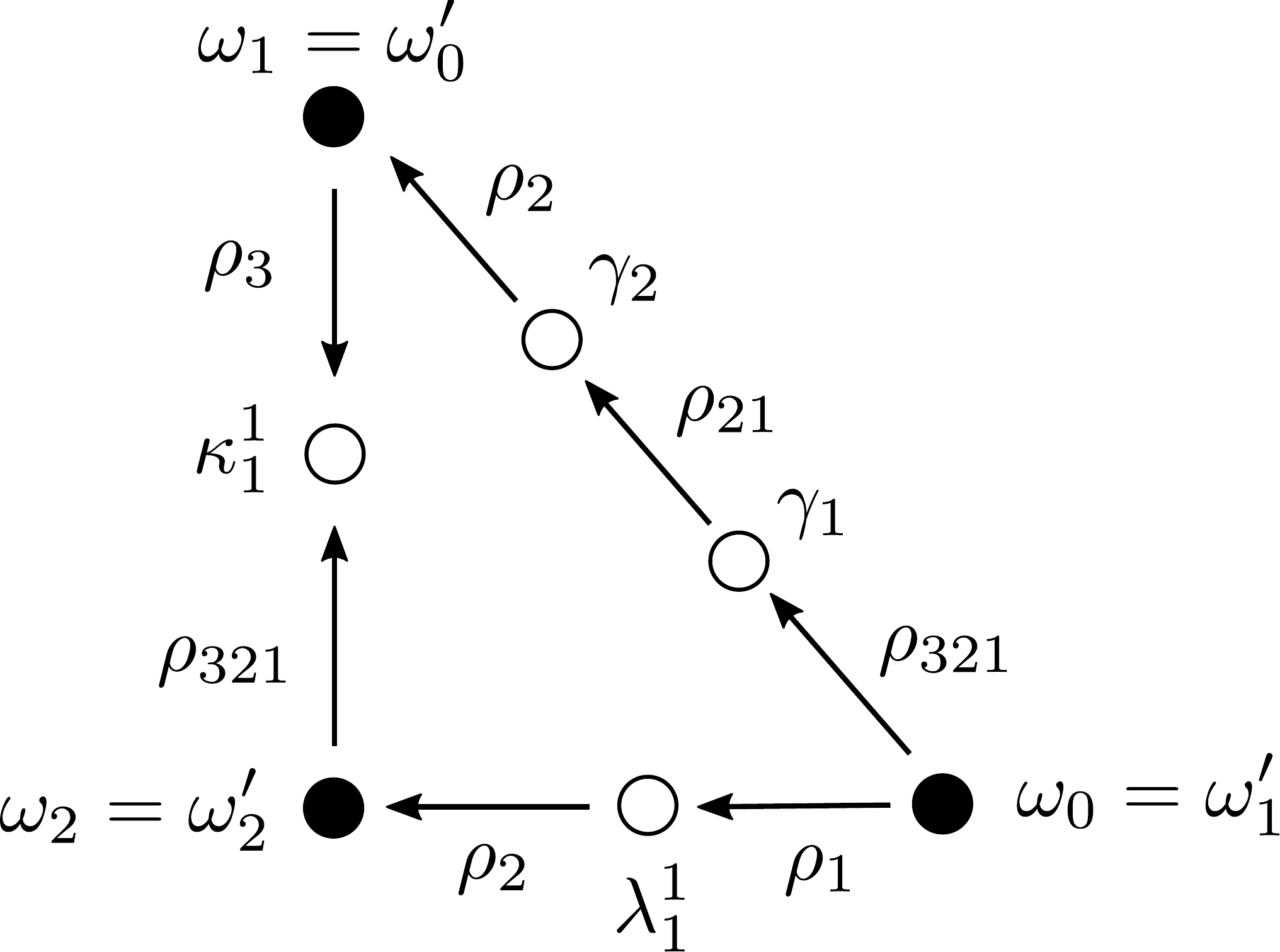}
\caption{$\widehat{\mathit{CFA}}(E, \phi_0)$}
\label{fig:LHT}
}
\end{figure}

\subsubsection{}Let $K$ be the figure-eight knot. Again fix $n \in \mathbb{Z}$ and assume $r=0$. $\widehat{\mathit{CFA}}(E, \phi_0)$ is given by the graph in Figure \ref{fig:Figure8}. Let $C_1$ denote the unbounded type A structure represented by the unstable loop, and let $C_2$ be the bounded type A structure represented by everything else. Then 
\[
\widehat{\mathit{CFO}}(Y^{\textrm{orb}}) \simeq \Big( C_1 \boxtimes D_{(D^2 \times S^1) / \mathbb{Z}_n}' \Big) \oplus \Big( C_2 \boxtimes D_{(D^2 \times S^1) / \mathbb{Z}_n} \Big),
\]
\noindent which implies that
\[
\widehat{\mathit{HFO}}(Y^{\textrm{orb}}) \cong H_1^{\textrm{orb}} \oplus H_2^{\textrm{orb}},
\]
\noindent where $H_1^{\textrm{orb}}$ denotes the homology of $C_1 \boxtimes D_{(D^2 \times S^1) / \mathbb{Z}_n}'$ and $H_2^{\textrm{orb}}$ denotes the homology of $C_2 \boxtimes D_{(D^2 \times S^1) / \mathbb{Z}_n}$. As noted above, Example \ref{S2S1orbifoldex} tells us that $H_1^{\textrm{orb}} \cong \langle \bm{\omega_0} \otimes \bm{a}, \bm{\omega_0} \otimes \bm{b} \rangle$. Now $C_2 \boxtimes D_{(D^2 \times S^1) / \mathbb{Z}_n}$ is generated by $\bm{\lambda^1_1} \otimes \bm{x_j}$, $\bm{\lambda^3_1} \otimes \bm{x_j}$, $\bm{\kappa^1_1} \otimes \bm{x_j}$, and $\bm{\kappa^3_1} \otimes \bm{x_j}$. Let $\partial_2^{\boxtimes}$ denote the differential in $C_2 \boxtimes D_{(D^2 \times S^1) / \mathbb{Z}_n}$. Then $\partial_2^{\boxtimes}(\bm{\lambda^1_1} \otimes \bm{x_j}) = \bm{\kappa^3_1} \otimes \bm{x_{j+1}}$, and on all other generators $\partial_2^{\boxtimes}$ is trivial. This implies that $H_2^{\textrm{orb}} = \mathbb{Z}_2 \langle \bm{\lambda^3_1} \otimes \bm{x_j}, \bm{\kappa^1_1} \otimes \bm{x_j} \rangle$, which has rank $2n$. Altogether, $\widehat{\mathit{HFO}}(Y^{\textrm{orb}})$ has rank $2n+2$, which agrees with Theorem \ref{relationshipHFhat}, since $\varepsilon(K)=0$ and $\textrm{rank}\big( \widehat{\mathit{HF}}(Y) \big) =4$.

\begin{figure}[h]
\centering{
\resizebox{85mm}{!}{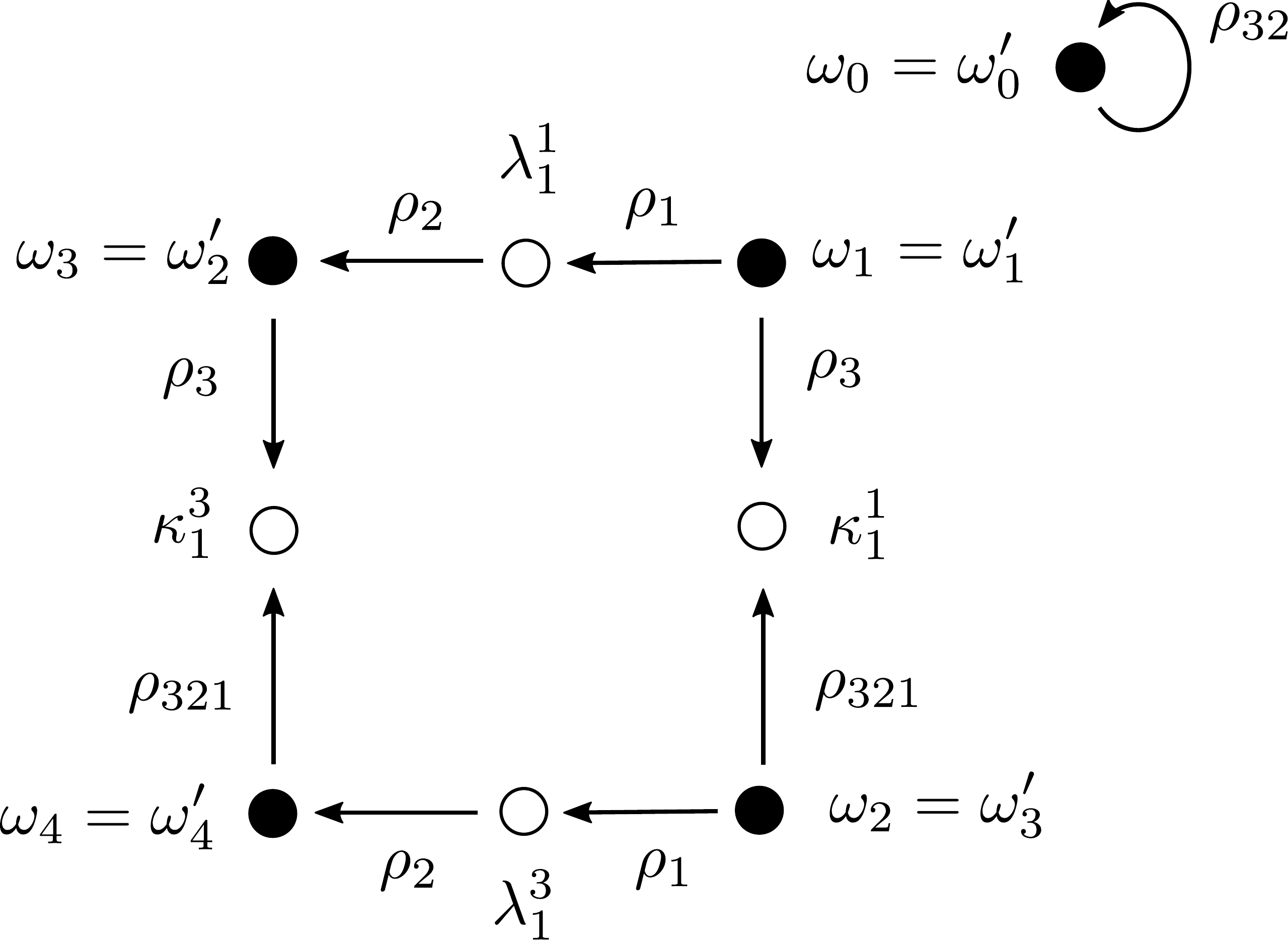}
\caption{$\widehat{\mathit{CFA}}(E, \phi_0)$}
\label{fig:Figure8}
}
\end{figure}

\section{Categorifying $|H_1^{\textrm{orb}}|$ to $\widehat{\mathit{HFO}}(Y^{\textrm{orb}})$}\label{sec5}

\subsection{Background}\label{relativeZ2grading}

We start by reviewing the relative $\mathbb{Z}_2$-grading $\textrm{gr}$ on $\widehat{\mathit{CF}}$. The details are in \cite{OS2, HLW}. Let $\mathcal{H} = (\Sigma_g, \bm{\alpha}, \bm{\beta}, z)$ be a Heegaard diagram for a closed 3-manifold $Y$. Order and orient the $\bm{\alpha}$ and $\bm{\beta}$ circles. Then given any generator $\bm{x}$ of the $\mathbb{Z}_2$-chain complex $\widehat{\mathit{CF}}(\mathcal{H})$, we have two integers $\textrm{inv}(\sigma_{\bm{x}})$ and $o(\bm{x})$ defined as follows. $\sigma_{\bm{x}}$ is the permutation in $S_g$ that allows us to express $\bm{x}$ as $\{x_1, \ldots, x_g\}$ where $x_i \in \beta_i \cap \alpha_{\sigma_{\bm{x}}(i)}$, and $\textrm{inv}(\sigma_{\bm{x}})$ counts the number of inversions in $\sigma_{\bm{x}}$, i.e. the number of pairs $(i,j)$ where $i < j$, but $\sigma_{\bm{x}}(i) > \sigma_{\bm{x}}(j)$. At every intersection point $x_i$ we can assign an orientation: positive if $\alpha_{\sigma_{\bm{x}}(i)}$ followed by $\beta_i$ gives the orientation on $\Sigma_g$, and negative otherwise. Write $o(x_i)=0$ if $x_i$ is positively oriented and $o(x_i)=1$ if $x_i$ is negative oriented. Then $o(\bm{x})$ is the sum $o(x_1) + \ldots + o(x_g)$, and we define
\[
\textrm{gr}(\bm{x}) = \textrm{inv}(\sigma_{\bm{x}}) + o(\bm{x}) \textrm{   (mod 2)}.
\]
\noindent Up to a possible overall shift, $\textrm{gr}$ is well-defined, i.e. does not depend on how we order and orient the $\bm{\alpha}$ and $\bm{\beta}$ circles. So we'll think of $\textrm{gr}$ as a relative $\mathbb{Z}_2$-grading on $\widehat{\mathit{CF}}(\mathcal{H})$. $\textrm{gr}$ induces a relative $\mathbb{Z}_2$-grading on $\widehat{\mathit{HF}}(\mathcal{H})$, which we also call $\textrm{gr}$. With respect to both relative $\mathbb{Z}_2$-gradings, we have $|\chi\big(\widehat{\mathit{CF}}(\mathcal{H})\big)| = |\chi\big(\widehat{\mathit{HF}}(\mathcal{H})\big)| = |H_1(Y)|$, for details see \cite{OS2}.

There's an analogous story for the bordered invariants $\widehat{\mathit{CFA}}$ and $\widehat{\mathit{CFD}}$, due to Hom, Lidman, and Watson in \cite{HLW}. To explain this, we'll need the notion of a bordered partial permutation. Recall $[n]$ denotes the set $\{1, \ldots, n\}$.

\begin{Def}
Let $ g \in \mathbb{N}$. Fix $B \subseteq [g+1]$ with $|B|=2$. Suppose $\sigma: [g] \rightarrow [g+1]$ is a function that satisfies the following:
\begin{enumerate}
\item $\sigma$ is injective and
\item the complement of $B$ in $[g+1]$ lies in $\textrm{Im}(\sigma)$.
\end{enumerate}
Then we call $\sigma$ a \textit{bordered partial permutation}. Furthermore, we say that $\sigma$ is \textit{type A} if $B=\{g, g+1\}$, and \textit{type D} if $B=\{1, 2\}$.
\end{Def}

\noindent Given a bordered partial permutation $\sigma$, we can consider its \textit{sign} $\textrm{sgn}(\sigma)$. For type A bordered partial permutations $\sigma$, we define $\textrm{sgn}_A(\sigma)= \textrm{inv}(\sigma) \textrm{ (mod 2)}$, and for type D bordered partial permutations $\sigma$, we define $\textrm{sgn}_D(\sigma)= \textrm{inv}(\sigma) + \sum_{i \in \textrm{Im}(\sigma)} \# \{ j \mid j >i, j \notin \textrm{Im}(\sigma)\}$ \textrm{(mod 2)}.

Now let $\mathcal{H} = (\overline{\Sigma}_g; \bm{\alpha}; \bm{\beta}; z)$ be a bordered Heegaard diagram for a bordered 3-manifold $(Y, \phi)$. There's a canonical way to order and orient the two $\bm{\alpha}$ arcs $\bm{\alpha}_1$ and $\bm{\alpha}_2$. The ordering is given by the indices. The orientations are defined as follows. If $(Y, \phi)$ is type A, we orient $\bm{\alpha}^a_1$ and $\bm{\alpha}^a_2$ so that when we follow $\partial \overline{\Sigma}_g$ in the direction of its orientation, we hit the initial point of $\bm{\alpha}^a_1$, then the initial point of $\bm{\alpha}^a_2$, followed by the terminal point of $\bm{\alpha}^a_1$ and then the terminal point of $\bm{\alpha}^a_2$. If $(Y, \phi)$ is type D, we orient $\bm{\alpha}^a_1$ and $\bm{\alpha}^a_2$ so that when we follow $\partial \overline{\Sigma}_g$ in the direction of its orientation, we hit the initial point of $\bm{\alpha}^a_2$, then the initial point of $\bm{\alpha}^a_1$, followed by the terminal point of $\bm{\alpha}^a_2$ and then the terminal point of $\bm{\alpha}^a_1$. Doing this ensures that when we glue the type A and type D $\bm{\alpha}^a_i$ arcs together along their boundaries, we get a coherently oriented $\bm{\alpha}^a_i$ circle. Now fix an ordering of the $\bm{\alpha}$ and $\bm{\beta}$ circles. If $(Y, \phi)$ is type A, we assume the circles in $\bm{\alpha}^c$ are ordered before the arcs in $\bm{\alpha}^a$. If $(Y, \phi)$ is type D, we choose the opposite ordering: $\bm{\alpha}^a$ before $\bm{\alpha}^c$. This, coupled with the above ordering on the $\bm{\alpha}$ arcs, is an ordering on all of $\bm{\alpha}$ and $\bm{\beta}$. Note that if we fix orientations on the $\bm{\alpha}$ and $\bm{\beta}$ circles, then we've oriented all of $\bm{\alpha}$ and $\bm{\beta}$.

Let $\bm{x}$ be a $g$-tuple of points in $\bm{\beta} \cap \bm{\alpha}$, with one point on each $\bm{\beta}$ circle, one point on each of the $g-1$ $\bm{\alpha}^c$ circles, and one point on one of the two $\bm{\alpha}^a$ arcs. Express $\bm{x}$ as $\{x_1, \ldots, x_g\}$, where $x_i \in \beta_i \cap \alpha_{\sigma_{\bm{x}}(i)}$ for some injection $\sigma_{\bm{x}}: [g] \rightarrow [g+1]$ satisfying $\bm{\alpha}^c \subset \{\alpha_{\sigma_{\bm{x}}(i)} \mid i \in [g]\}$. When $(Y, \phi)$ is type A, $\sigma_{\bm{x}}$ is a type A bordered partial permutation, and when $(Y, \phi)$ is type D, $\sigma_{\bm{x}}$ is a type D bordered partial permutation. We can now define the relative $\mathbb{Z}_2$-gradings $\textrm{gr}_A$ and $\textrm{gr}_D$ on $\widehat{\mathit{CFA}}$ and $\widehat{\mathit{CFD}}$:

\begin{Def}
The \textit{type A grading} of a generator $\bm{x}$ of $\widehat{\mathit{CFA}}(\mathcal{H})$ is 
\[
\textrm{gr}_A(\bm{x}) = \textrm{sgn}_A(\sigma_{\bm{x}}) + o(\bm{x}) \textrm{ (mod 2).}
\]
\end{Def}

\begin{Def}
The \textit{type D grading} of a generator $\bm{x}$ of $\widehat{\mathit{CFD}}(\mathcal{H})$ is
\[
\textrm{gr}_D(\bm{x}) = \textrm{sgn}_D(\sigma_{\bm{x}}) + o(\bm{x}) \textrm{ (mod 2).}
\]
\end{Def}

\noindent Up to a possible overall shift, $\textrm{gr}_A$ and $\textrm{gr}_D$ do not depend on how we order and orient the $\bm{\alpha}$ and $\bm{\beta}$ circles. So we'll think of $\textrm{gr}_A$ and $\textrm{gr}_D$ as relative $\mathbb{Z}_2$-gradings on $\widehat{\mathit{CFA}}$ and $\widehat{\mathit{CFD}}$.

\begin{Ex}\label{regulargradingsolidtorus}
Consider $D^2 \times S^1$ with the type D parameterization $\psi: F \rightarrow \partial (D^2 \times S^1)$ defined by $\alpha_1^a \mapsto \{1\}\times S^1$ and $\alpha_2^a \mapsto \partial D^2 \times \{1\}$. Let $\mathcal{H}_{D^2 \times S^1}$ be the bordered Heegaard diagram for $(D^2 \times S^1, \psi)$ in Figure \ref{fig:BHD}. Then the type D grading $\textrm{gr}_D$ on $\widehat{\mathit{CFD}}(\mathcal{H}_{D^2 \times S^1})$ is given by $\bm{x} \mapsto 1$. Note that if we change the orientation on $\bm{\beta}$, we get $\textrm{gr}_D(\bm{x})=0$ instead.
\end{Ex}

We next explain how to recover the relative $\mathbb{Z}_2$-grading $\textrm{gr}$ on $\widehat{\mathit{CF}}$ from the relative type A and type D gradings $\textrm{gr}_A$ and $\textrm{gr}_D$ on $\widehat{\mathit{CFA}}$ and $\widehat{\mathit{CFD}}$. This is due to Hom, Lidman, and Watson in \cite[Proposition 3.17]{HLW}. Let $(\mathcal{H}_1, \mathcal{Z})$ and $(\mathcal{H}_2, -\mathcal{Z})$ be bordered Heegaard diagrams for $(Y_1, F)$ and $(Y_2, -F)$. If we glue $(\mathcal{H}_1, \mathcal{Z})$ and $(\mathcal{H}_2, -\mathcal{Z})$ together along $\mathcal{Z}$, we get a Heegaard diagram $\mathcal{H}=\mathcal{H}_1 \cup_{\mathcal{Z}} \mathcal{H}_2$ that describes the closed 3-manifold $Y_1 \cup_{F} Y_2$. In particular, the $\bm{\alpha}$ arcs in $(\mathcal{H}_1, \mathcal{Z})$ and $(\mathcal{H}_2, -\mathcal{Z})$ give rise to two $\bm{\alpha}$ circles in $\mathcal{H}$, and the preferred orientations on the $\bm{\alpha}$ arcs induce coherent orientations on the resulting $\bm{\alpha}$ circles. Furthermore, if we orient the $\bm{\alpha}$ and $\bm{\beta}$ circles in $(\mathcal{H}_1, \mathcal{Z})$ and $(\mathcal{H}_2, -\mathcal{Z})$, we get induced orientations for the remaining $\bm{\alpha}$ and $\bm{\beta}$ circles in $\mathcal{H}$. In a similar way, given any ordering on the $\bm{\alpha}$ and $\bm{\beta}$ circles in $(\mathcal{H}_1, \mathcal{Z})$ and $(\mathcal{H}_2, -\mathcal{Z})$, there is an induced ordering on the $\bm{\alpha}$ and $\bm{\beta}$ circles in $\mathcal{H}$. To get the ordering on the $\bm{\alpha}$ circles in $\mathcal{H}$, we take the $\bm{\alpha}$ circles in $(\mathcal{H}_1, \mathcal{Z})$ first, followed by the glued up $\bm{\alpha}$ arcs in $\mathcal{H}$, and then the $\bm{\alpha}$ circles in $(\mathcal{H}_2, -\mathcal{Z})$. The ordering on the $\bm{\beta}$ circles in $\mathcal{H}$ is similar. 

Now let $\bm{y}$ and $\bm{x}$ be generators of $\widehat{\mathit{CFA}}(\mathcal{H}_1, \mathcal{Z})$ and $\widehat{\mathit{CFD}}(\mathcal{H}_2, -\mathcal{Z})$, respectively. Suppose $\bm{y} \otimes \bm{x} \neq 0$. Then $\bm{y} \otimes \bm{x}$ is a generator of $\widehat{\mathit{CF}}(\mathcal{H})$, and \cite[Proposition 3.17]{HLW} states that up to a possible overall shift independent of both $\bm{y}$ and $\bm{x}$
\begin{equation}\label{Eqn51}
\textrm{gr}(\bm{y} \otimes \bm{x}) = \textrm{gr}_A(\bm{y}) + \textrm{gr}_D(\bm{x})  \textrm{ (mod 2).}
\end{equation}

\subsection{Proof of Theorem \ref{categorification}} 

Recall we have the following set-up: $Y^{\textrm{orb}}$ is a 3-orbifold with singular set a knot $K$ of multiplicity $n$, $N$ is a $\mathbb{Z}_n$-equivariant tubular neighborhood of $K$ parameterized by $\phi_N: (D^2 \times S^1) / \mathbb{Z}_n  \rightarrow N$, and $E$ is the complement of $\textrm{int}(N)$ with (orientation-preserving) boundary parameterization $\phi_{\partial E}: F \rightarrow \partial E$ induced by $\phi_N$. Choose a bordered Heegaard diagram $(\mathcal{H}_E, \mathcal{Z})$ for the type A bordered 3-manifold $(E, \phi_{\partial E})$. Without loss of generality, we'll assume the associated type A structure $\widehat{\mathit{CFA}}(\mathcal{H}_E, \mathcal{Z})$ is bounded. Let $\textrm{gr}_A$ be the relative $\mathbb{Z}_2$-grading on $\widehat{\mathit{CFA}}(\mathcal{H}_E, \mathcal{Z})$ coming from bordered Floer theory. Figure \ref{fig:OrbifoldBHD} gives an orbifold bordered Heegaard diagram for $N$; call this  $(\mathcal{H}_N, -\mathcal{Z})$. We can define a relative $\mathbb{Z}_2$-grading $\textrm{gr}_D^{\textrm{orb}}$ on the type D structure $D_N$ by setting $\textrm{gr}_D^{\textrm{orb}}(\bm{x_i})=1$ for every $i$. If we pick a different orientation on $\bm{\beta}$, we'll need to take $\textrm{gr}_D^{\textrm{orb}}(\bm{x_i})=0$ instead. We define the relative $\mathbb{Z}_2$-grading $\textrm{gr}^{\textrm{orb}}$ on the $\mathbb{Z}_2$-chain complex $\widehat{\mathit{CFO}}(Y^{\textrm{orb}}) = \widehat{\mathit{CFA}}(\mathcal{H}_E, \mathcal{Z}) \boxtimes D_N$ to be $\textrm{gr}_A + \textrm{gr}_D^{\textrm{orb}}$ (mod 2). Note that $\textrm{gr}^{\textrm{orb}}$ generalizes Equation \ref{Eqn51} because $\textrm{gr}_D^{\textrm{orb}}$ generalizes the relative $\mathbb{Z}_2$-grading $\textrm{gr}_D$ on $\widehat{\mathit{CFD}}(\mathcal{H}_{D^2 \times S^1})$ from Example \ref{regulargradingsolidtorus}. With respect to the induced relative $\mathbb{Z}_2$-grading $\textrm{gr}^{\textrm{orb}}$ on $\widehat{\mathit{HFO}}(Y^{\textrm{orb}})$, we have the following:

\begin{Lem}\label{Lem55}
\begin{equation}
  |\chi\big(\widehat{\mathit{HFO}}(Y^{\emph{orb}})\big)|=\begin{cases}
    n \cdot |H_1(|Y^{\emph{orb}}|)|, & \text{\emph{if} $|H_1(|Y^{\emph{orb}}|)|$ \emph{finite}}\\
    0, & \text{\emph{otherwise}}.
  \end{cases}
\end{equation}
\end{Lem}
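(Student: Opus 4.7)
The plan is to reduce the lemma to the Ozsv\'{a}th--Szab\'{o} categorification result $|\chi(\widehat{\mathit{HF}}(Y))|=|H_1(Y)|$ for closed $3$-manifolds $Y$ with finite $H_1$ (and $\chi=0$ otherwise) \cite{OS2}. Since passing to homology preserves the Euler characteristic with respect to a relative $\mathbb{Z}_2$-grading, I would work at the chain level and aim to show
\[
|\chi(\widehat{\mathit{CFO}}(Y^{\textrm{orb}}))| = n \cdot |\chi(\widehat{\mathit{CF}}(|Y^{\textrm{orb}}|))|,
\]
after which the lemma follows immediately in both the finite and infinite $|H_1|$ cases.

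To make the right-hand side accessible via the pairing theorem, the first step is to identify $|Y^{\textrm{orb}}|$ with a genuine Dehn filling of $E$. Topologically $(D^2 \times S^1)/\mathbb{Z}_n \cong D^2 \times S^1$, and under this identification the orbifold meridian $\partial D^2/\mathbb{Z}_n \times \{1\}$ is precisely the meridian of the underlying solid torus. Hence $|Y^{\textrm{orb}}|$ is obtained by gluing $E$ to a solid torus via the same boundary parameterization $\psi$ of Example \ref{regulargradingsolidtorus}, so by \eqref{pairing}
\[
\widehat{\mathit{CF}}(|Y^{\textrm{orb}}|) \simeq \widehat{\mathit{CFA}}(\mathcal{H}_E, \mathcal{Z}) \boxtimes D_{D^2 \times S^1}.
\]

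The main step would be a direct generator count. The type D structure $D_N$ has generators $x_1, \ldots, x_n$, all in the single idempotent $\iota_2$ (arising from the $n$ intersections of the immersed $\beta$ curve with $\alpha_1^a$), with $\mathrm{gr}_D^{\textrm{orb}}(x_i)=1$ for every $i$ by construction; this specializes when $n=1$ to the single generator of $D_{D^2 \times S^1}$ whose $\mathrm{gr}_D$-value is $1$ by Example \ref{regulargradingsolidtorus}. Thus the generators of $\widehat{\mathit{CFO}}(Y^{\textrm{orb}})$ are exactly the $\bm{\alpha} \otimes x_i$ with $\bm{\alpha} \cdot \iota_2 = \bm{\alpha}$ and $i \in \{1, \ldots, n\}$, each carrying $\mathrm{gr}^{\textrm{orb}}(\bm{\alpha} \otimes x_i) = \mathrm{gr}_A(\bm{\alpha}) + 1$, while those of $\widehat{\mathit{CF}}(|Y^{\textrm{orb}}|)$ are the $\bm{\alpha} \otimes x$ with the same gradings. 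Counting with signs then yields $\chi(\widehat{\mathit{CFO}}(Y^{\textrm{orb}})) = \pm n \cdot \chi(\widehat{\mathit{CF}}(|Y^{\textrm{orb}}|))$, from which the target equality follows.

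The hardest part will be ensuring this generator-level comparison remains valid even when $\widehat{\mathit{CFA}}(\mathcal{H}_E, \mathcal{Z})$ is not bounded, forcing us to replace $D_N$ with a bounded finger-move version $D_N'$. I would need to check that such a replacement can be arranged to respect the relative $\mathbb{Z}_2$-grading $\mathrm{gr}_D^{\textrm{orb}}$ and that the generators it introduces come in matched pairs of opposite grading, contributing $0$ to the Euler characteristic. A secondary bookkeeping point is to verify that $\mathrm{gr}^{\textrm{orb}}$ specializes to the standard $\mathrm{gr}$ on $\widehat{\mathit{CF}}$ of \cite{HLW} when $n=1$, so that the factor of $n$ really does emerge as claimed; this should follow from \eqref{Eqn51} together with the compatibility of $\mathrm{gr}_D^{\textrm{orb}}$ with $\mathrm{gr}_D$ noted immediately before the statement of this lemma.
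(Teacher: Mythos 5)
Your proposal is correct and takes essentially the same approach as the paper: reduce to the chain level, observe that $\widehat{\mathit{CFO}}(Y^{\textrm{orb}})$ has $n$ copies of each generator of $\widehat{\mathit{CF}}(|Y^{\textrm{orb}}|)$ with the same relative $\mathbb{Z}_2$-grading (since $\mathrm{gr}_D^{\textrm{orb}}(x_i) = \mathrm{gr}_D(x)$ for all $i$, and the box-tensor grading is additive), and conclude $\chi(\widehat{\mathit{CFO}}) = \pm n \cdot \chi(\widehat{\mathit{CF}})$, then invoke the Ozsv\'{a}th--Szab\'{o} categorification result. The only place you diverge is in how you plan to handle boundedness: you propose replacing $D_N$ by a finger-move version $D_N'$ and then tracking how the extra generators affect $\chi$, whereas the paper avoids this entirely by choosing the bordered Heegaard diagram $(\mathcal{H}_E, \mathcal{Z})$ for $E$ to be sufficiently admissible so that $\widehat{\mathit{CFA}}(\mathcal{H}_E, \mathcal{Z})$ is already bounded, keeping $D_N$ itself. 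This is cleaner, since it leaves the per-generator correspondence between $D_N$ and $D_{D^2\times S^1}$ exactly $n$-to-one; your route would require the additional bookkeeping check you flagged (that the finger-move introduces a canceling pair of opposite grading), which is doable but not necessary.
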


\begin{proof}
For the 3-manifold $|Y^\textrm{orb}|$, we have
\begin{equation*}
  |\chi\big(\widehat{\mathit{CF}}(|Y^{\textrm{orb}}|)\big)|=\begin{cases}
     |H_1(|Y^{\textrm{orb}}|)|, & \textrm{\textrm{if} $|H_1(|Y^{\textrm{orb}}|)|$ \textrm{finite}}\\
    0, & \text{\textrm{otherwise}}.
  \end{cases}
\end{equation*}
Since $\chi\big(\widehat{\mathit{HFO}}(Y^{\textrm{orb}})\big) = \chi\big(\widehat{\mathit{CFO}}(Y^{\textrm{orb}})\big)$, it suffices to show that $|\chi\big(\widehat{\mathit{CFO}}(Y^{\textrm{orb}})\big)|= n \cdot |\chi\big(\widehat{\mathit{CF}}(|Y^{\textrm{orb}}|)\big)|$ when $|H_1(|Y^{\textrm{orb}}|)|$ finite. Note that for every $i$, $\bm{y} \otimes \bm{x_i} \neq 0$ exactly when $\bm{y} \otimes \bm{x} \neq 0$ and $\textrm{gr}^{\textrm{orb}}_D(\bm{x_i}) = \textrm{gr}_D(\bm{x})$. Then up to sign
\begin{equation*}
\begin{split}
\chi\big(\widehat{\mathit{CFO}}(Y^{\textrm{orb}})\big) = & \# \{ \bm{y} \otimes \bm{x_i} \neq 0 \mid \textrm{gr}_A(\bm{y}) + \textrm{gr}^{\textrm{orb}}_D(\bm{x_i}) = 0\}-\\
& \# \{ \bm{y} \otimes \bm{x_i} \neq 0 \mid \textrm{gr}_A(\bm{y}) + \textrm{gr}^{\textrm{orb}}_D(\bm{x_i}) = 1\}\\
= & n \Big( \# \{ \bm{y} \otimes \bm{x} \neq 0 \mid \textrm{gr}_A(\bm{y}) + \textrm{gr}_D(\bm{x}) = 0|\} -\\
& \# \{ \bm{y} \otimes \bm{x} \neq 0 \mid \textrm{gr}_A(\bm{y}) + \textrm{gr}_D(\bm{x}) = 1|\} \Big)\\
= & n \cdot \chi\big(\widehat{\mathit{CF}}(|Y^{\textrm{orb}}|)\big).
\end{split}
\end{equation*}
\end{proof}

Suppose $K$ is nullhomologous in $|Y^{\textrm{orb}}|$. We want to show
\begin{equation*}
|\chi\big(\widehat{\mathit{HFO}}(Y^{\textrm{orb}})\big)|=\begin{cases}
    |H_1^{\textrm{orb}}(Y^{\textrm{orb}})|, & \text{\textrm{if} $|H_1^\textrm{orb}(Y^{\textrm{orb}})|$ \textrm{finite}}\\
    0, & \text{\textrm{otherwise}}.
  \end{cases}
\end{equation*}
By \cite[Lemma 6.4]{Wong}, $H_1^{\textrm{orb}}(Y^{\textrm{orb}}) \cong H_1(|Y^{\textrm{orb}}|) \times \mathbb{Z}_n\langle \mu \rangle$, where $\mu$ is a meridian of $K$. This, combined with Lemma \ref{Lem55}, tells us that if $|H_1^\textrm{orb}(Y^{\textrm{orb}})|$ is finite, then $|\chi\big(\widehat{\mathit{HFO}}(Y^{\textrm{orb}})\big)|= n \cdot |H_1(|Y^{\textrm{orb}}|)| = |H_1^{\textrm{orb}}(Y^{\textrm{orb}})|$, as needed. Now suppose $|H_1^\textrm{orb}(Y^{\textrm{orb}})|$ is infinite. Then $|H_1(|Y^{\textrm{orb}}|)|$ is infinite, and by Lemma \ref{Lem55}, $|\chi\big(\widehat{\mathit{HFO}}(Y^{\textrm{orb}})\big)| =0$. This concludes the proof of Theorem \ref{categorification} for $K$ nullhomologous in $|Y^{\textrm{orb}}|$.

Now let $Y$ be $\frac{p}{q}$-surgery on a knot $K \subset S^3$ with $\textrm{gcd}(p,q) =1$ and $p \geq 0$. Let $Y^{\textrm{orb}}$ be the 3-orbifold with underlying space $Y$ and singular curve $K$ of multiplicity $n$. Again we want to show
\begin{equation*}
|\chi\big(\widehat{\mathit{HFO}}(Y^{\textrm{orb}})\big)|=\begin{cases}
    |H_1^{\textrm{orb}}(Y^{\textrm{orb}})|, & \text{\textrm{if} $|H_1^\textrm{orb}(Y^{\textrm{orb}})|$ \textrm{finite}}\\
    0, & \text{\textrm{otherwise}}.
  \end{cases}
\end{equation*}
It's not hard to see that $H_1(Y) \cong \mathbb{Z}_{p} \langle \mu \rangle$ and $H_1^{\textrm{orb}}(Y^{\textrm{orb}}) \cong \mathbb{Z}_{np} \langle \mu \rangle$, where $\mu$ is a meridian of $K$. We again have two cases. First suppose $|H_1^\textrm{orb}(Y^{\textrm{orb}})|$ is finite. Then $p \neq 0$ and $|H_1^\textrm{orb}(Y^{\textrm{orb}})| = n \cdot |H_1(Y)|$. From Lemma \ref{Lem55}, $|\chi\big(\widehat{\mathit{HFO}}(Y^{\textrm{orb}})\big)| = n \cdot |H_1(Y)| = |H_1^\textrm{orb}(Y^{\textrm{orb}})|$, as desired. Now suppose $|H_1^\textrm{orb}(Y^{\textrm{orb}})|$ is infinite. Then $p=0$, which means $|H_1(Y)|$ is infinite. Again by Lemma \ref{Lem55}, $|\chi\big(\widehat{\mathit{HFO}}(Y^{\textrm{orb}})\big)| =0$. This concludes the proof of Theorem \ref{categorification} for $Y^{\textrm{orb}} = (Y, K, n)$, where $Y$ is $\frac{p}{q}$-surgery on a knot $K \subset S^3$.

{\qed} \popQED

\bibliography{OrbifoldsHF}
\bibliographystyle{plain}

\end{document}